\newtheorem{thm}{Theorem}[section]
\newtheorem{cor}[thm]{Corollary}
\newtheorem{prop}[thm]{Proposition}
\newtheorem{lem}[thm]{Lemma}
\theoremstyle{definition}
\newtheorem{defn}[thm]{Definition}
\theoremstyle{remark}
\newtheorem{rem}[thm]{Remark}
\newcommand{\PSL}{\operatorname{PSL}_2(\mathbb{R})}
\newcommand{\m}{m^{\operatorname{Haar}}}
\newcommand{\mm}{m_{\Gamma}}
\newcommand{\GaG}{\Gamma\backslash G}
\newcommand{\W}{W^{\operatorname{ss}}_{\operatorname{loc}}}
\newcommand{\T}{T^{*}}
\newcommand{\TT}{T^{\#}}
\newcommand{\rT}{(rT)^{*}}
\newcommand{\pT}{((1+\delta)T)^*}
\newcommand{\HH}{H\left(\frac{\xi_{T^*}(g)}{T^*}\right)}
\newcommand{\HI}{H\left(\frac{\xi_i}{T^{\#}_i}\right)}
\newcommand{\rH}{H\left( \frac{\xi_{(rT)^*}(g)}{(rT)^{*}}\right)}
\newcommand{\pH}{H\left(\frac{\xi_{((1+\delta)T)^*}(g)}{((1+\delta)T)^*}\right)}
\newcommand{\Zcor}{\frac{\xi_{T^{*}}(g)}{T^*}}
\newcommand{\rZcor}{\frac{\xi_{(rT)^{*}}(g)}{(rT)^*}}
\newcommand{\Zc}{\xi_{T^*}(g)}
\newcommand{\rZc}{\xi_{(rT)^*}(g)}
\newcommand{\LP}{\left(}
\newcommand{\RP}{\right)}
\newcommand{\uu}{u^{+}}
\newcommand{\x}{xa_{-s}u_t}
\newcommand{\xx}{xa_{-s}u_tg_{t}^{-1}}
\newcommand{\xu}{xu^+_ra_{-s}u_{\beta(t)}}
\newcommand{\Comm}{\operatorname{Comm}_{G}(\Gamma)}
\let\c@equation\c@thm
\numberwithin{equation}{section}
\title[Joining measures on abelian covers] {Joining Measures for horocycle flows on abelian covers}
\author{Wenyu Pan}
\address{Mathematics Department, Yale University, New Haven, CT 06511}
\email{wenyu.pan@yale.edu}
\begin{document}

\begin{abstract}
A celebrated result of Ratner from the eighties says that two horocycle flows on hyperbolic surfaces of finite area are either the same up to algebraic change of coordinates, or they have no non-trivial joinings. Recently, Mohammadi and Oh extended Ratner's theorem to horocycle flows on hyperbolic surfaces of infinite area but finite genus. In this paper, we present the first joining classification result of a horocycle flow on a hyperbolic surface of infinite genus: a $\mathbb{Z}$ or $\mathbb{Z}^2$-cover of a general compact hyperbolic surface. 
We also discuss several applications.

\end{abstract}

\maketitle

\section{Introduction}
The starting point of our discussion is Ratner's joining theorem for horocycle flows on a finite volume quotient of $\operatorname{PSL}_2(\mathbb{R})$ \cite{Ratner}, which is a particular case of her general classification theorem of invariant measures for unipotent flows on any finite volume homogeneous space of a connected Lie group \cite{Ratner 1}. 
For infinite volume homogeneous spaces, such classification theorems are  known only for some special cases (\cite{Burger, Roblin, Winter, BL2, Sarig} etc.)

Recently, Mohammadi-Oh \cite{MO} extended Ratner's joining theorem to geometrically finite discrete subgroups in $\operatorname{PSL}_2(\mathbb{R})$ or $\operatorname{PSL}_2(\mathbb{C})$. Their work is built on earlier works of Flaminio and Spatzier on the rigidity of horospherical foliations for such discrete subgroups (\cite{FS, FS 1}). In this paper, we extend Ratner's joining theorem to  the unit tangent bundle of a $\mathbb Z^d$-cover of a compact hyperbolic surface. To the best of our knowledge, this is the first joining classification result for hyperbolic surface of infinite genus.

To state our results more precisely, let $G=\operatorname{PSL}_2(\mathbb{R})$ and $\Gamma_1$, $\Gamma_2$ be  discrete subgroups of $G$. In the whole paper, all discrete subgroups of $G$ are assumed to be torsion-free and non-elementary. Assume further that $\Gamma_1$ is a normal subgroup of a cocompact lattice $\Gamma_1'$ of $G$ so that $\Gamma_1\backslash \Gamma_1'\cong \mathbb{Z}^d$ for  some positive integer $d$. Then $\Gamma_1\backslash G$ is a $\mathbb{Z}^d$-cover of the unit tangent bundle of the compact hyperbolic surface $\Gamma_1'\backslash \mathbb{H}^2$. For simplicity, discrete subgroups like $\Gamma_1$ will be called $\mathbb{Z}^d$-covers. Let
\begin{equation*}
Z=\Gamma_1\backslash G\times \Gamma_2\backslash G.
\end{equation*}

Set
\begin{equation}
\label{horocyclic group}
U=\left\{u_t:=\begin{pmatrix} 1 & t\\ 0 & 1\end{pmatrix}:t\in \mathbb{R}\right\}
\end{equation}
and $\Delta(U)=\{(u_t,u_t):t\in \mathbb{R}\}$. As is well known, the right translation action of $u_t$ on $\Gamma_i\backslash G$ corresponds to the contracting horocycle flow when we identify $\Gamma_i\backslash G$ with the unit tangent bundle of the hyperbolic surface $\Gamma_i\backslash \mathbb{H}^2$.

\begin{defn}
Let $\mu_i$ be a locally finite $U$-invariant Borel measure on $\Gamma_i\backslash G$ for $i=1,2$. A locally finite $\Delta(U)$-invariant measure $\mu$ on $Z$ is called a $U$-joining with respect to the pair $(\mu_1,\mu_2)$ if the push-forward $(\pi_i)_{*}\mu$ is proportional to $\mu_i$ for each $i=1,2$; here $\pi_i$ denotes the canonical projection of $Z$ to $\Gamma_i\backslash G$. If $\mu$ is $\Delta(U)$-ergodic, then $\mu$ is called an ergodic $U$-joining.
\end{defn}

In this paper, we investigate the $U$-joinings with respect to the pair of Haar measures $(\m_{\Gamma_1},\m_{\Gamma_2})$. In fact, Ledrappier and Sarig showed in \cite{LS} that  the Haar measure is the unique $U$-ergodic measure for $\mathbb{Z}^d$-covers which admits a generalized law of large numbers.

Our definition of $U$-joinings rules out the product measure $\m_{\Gamma_1}\times \m_{\Gamma_2}$ since its projection to $\Gamma_2\backslash G$ is an infinite multiple of $\m_{\Gamma_2}$. Nevertheless, a finite cover self-joining provides an example of $U$-joining.  Recall that two subgroups of $G$ are said to be commensurable with each other if their intersection has finite index in each of them.

\begin{defn}[Finite cover self-joining]
\label{finite cover self-joining}
Suppose that for some $g_0\in G$, $\Gamma_1$ and $g_0^{-1}\Gamma_2 g_0$ are commensurable with each other. Using the map
\begin{equation*}
\Gamma_1\cap g_0^{-1}\Gamma_2g_0\backslash G\to Z
\end{equation*}
defined by $[g]\mapsto ([g],[g_0g])$,  the pushforward of the Haar measure $\m_{\Gamma_1\cap g_0^{-1}\Gamma_2g_0}$ to $Z$  gives a $U$-joining,  which will be called a finite cover self-joining. If $\mu$ is a $U$-joining, then any translation of $\mu$ by $(e,u_{t})$ is also a $U$-joining. Such a translation of a finite cover self-joining will also be called a finite cover self-joining.
\end{defn}

Our main result is as follows:
\begin{thm}
\label{main thm about ergodic joining}
Let $\Gamma_1$ be a $\mathbb{Z}$ or $\mathbb{Z}^2$-cover and let $\Gamma_2$ be any discrete subgroup of $G$. Then any locally finite ergodic $U$-joining on $Z$ is a finite cover self-joining.
\end{thm}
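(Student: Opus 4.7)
The plan is to follow the broad blueprint of Ratner's joining theorem as adapted to infinite-volume settings by Mohammadi-Oh. Starting from a locally finite ergodic $\Delta(U)$-invariant joining $\mu$, the goal is to produce an extra invariance of $\mu$ under some element of the normalizer $\N$ lying outside $\Delta(U)$; once that is in hand, one should be able to show that $\mu$ is supported on a closed orbit of a subgroup commensurating $\Gamma_1$ with $g_0^{-1}\Gamma_2 g_0$ for some $g_0\in G$, at which point the Ledrappier-Sarig characterization of $\m_{\Gamma_1}$ as the unique $U$-ergodic measure on a $\mathbb{Z}^d$-cover obeying a generalized law of large numbers forces $\mu$ to be the push-forward of $\m_{\Gamma_1\cap g_0^{-1}\Gamma_2 g_0}$ under $[g]\mapsto ([g],[g_0g])$, i.e.\ a finite cover self-joining.

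The production of the extra invariance is where the bulk of the work lies. I would run the classical H-principle (polynomial divergence of unipotent orbits): pick two $\mu$-generic points $(x,y)$ and $(x',y')$ that are close in $Z$ but not on the same $\Delta(U)$-orbit, apply $\Delta(u_t)$, and use the fact that the transverse coordinate diverges polynomially in $t$ with leading order in the centralizer of $U$. Stopping the clock when the transverse displacement reaches a fixed small scale and taking a limit should yield a nontrivial $(h_1,h_2)\in \N\setminus \Delta(U)$ under which $\mu$ is invariant. The serious technical issue is that the Haar measure $\m_{\Gamma_1}$ on a $\mathbb{Z}^d$-cover is infinite, so the standard Birkhoff ergodic theorem is unavailable as a tool for selecting generic points; this forces one to work instead with weighted Birkhoff sums of the form $\HH$, where $\xi_{T^*}(g)$ records the net displacement along the $U$-orbit of $g$ in $\mathbb{Z}^d\simeq \Gamma_1'/\Gamma_1$ and $H$ is a compactly supported density on $\mathbb{R}^d$, and to match pairs of generic points in both factors through a synchronized stopping-time construction $T^{\#}_i$ on the $i$th factor.

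The hardest step, and the reason the result is restricted to $d\le 2$, is in ensuring that the element $(h_1,h_2)$ extracted via the H-principle is genuinely transverse to $\Delta(U)$ and is not diffused out by the $\sqrt{T}$-scale random drift of horocycle orbits in the cover direction. For $d=1,2$ this should be arrangeable because $\Zc$ satisfies a non-degenerate central limit theorem together with sufficiently uniform local-limit behaviour (essentially due to Ledrappier), so the cover displacements in the two factors can be made to cancel to leading order along a positive-density set of times and the weighted averages $\HI$ converge to something nontrivial along those times. Once this convergence is established, the remaining argument parallels Mohammadi-Oh: the extra invariance combined with Flaminio-Spatzier-type rigidity of $U$-invariant measures on each factor forces $\mu$ to be supported on a closed $\Delta(U)\cdot(e,g_0)$-orbit, which is by definition a finite cover self-joining.
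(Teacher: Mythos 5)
Your opening and closing steps match the paper's strategy in outline, but the middle of your argument --- precisely where the paper does most of its work --- is missing, and two of your supporting claims are off. The H-principle step you describe does appear (Theorem \ref{A-invariance of joining}): using the window properties one shows that $\mu$ picks up invariance under a nontrivial connected subgroup of $\Delta(A)(\{e\}\times U)$, and a separate argument (Lemma \ref{not invariant}) rules out $\{e\}\times U$-invariance, so $\mu$ is $\Delta(A)$-invariant up to conjugation. But this alone does not put $\mu$ on a closed orbit. What the paper then needs is: (i) finiteness of the fibers of the disintegration of $\mu$ over $\Gamma_1\backslash G$ (Theorem \ref{atom measures}), which produces an $AU$-equivariant finite-set-valued map $\mathcal{Y}:\Gamma_1\backslash G\to\Gamma_2\backslash G$; and (ii) the rigidity theorem (Theorem \ref{AU rigidity}) upgrading $AU$-equivariance of $\mathcal{Y}$ to $U^+$-equivariance, hence $G$-equivariance, which is what finally yields $q_0$ with $[\Gamma_1:\Gamma_1\cap q_0^{-1}\Gamma_2 q_0]<\infty$ and identifies $\mu$ as a finite cover self-joining. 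Step (ii) is the heart of the proof: one shows that if $xa_{-s}$ and $xu^+_ra_{-s}$ both return to a carefully constructed compact set, then the $U$-orbits of $\mathcal{Y}(xu^+_r)u^+_{-r}a_{-s}$ and $\mathcal{Y}(x)a_{-s}$ stay $O(1)$-close on $[0,e^s]$; this uses Hopf's ratio theorem in place of Birkhoff, polynomial divergence, and a weak $(C,\alpha)$-good property that is itself derived from the two window properties. Your proposal compresses all of this into ``the extra invariance combined with Flaminio--Spatzier-type rigidity forces $\mu$ to be supported on a closed orbit,'' which is not a proof.

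Two further inaccuracies. First, you locate the restriction to $d\le 2$ in the H-principle/central-limit step; in fact the window property and the underlying local limit estimates hold for every $d$, and the restriction enters only through Rees' theorem that the geodesic flow on $\Gamma_1\backslash G$ is ergodic (hence conservative) precisely for $\mathbb{Z}$ and $\mathbb{Z}^2$-covers --- this is what supplies the sequence $s_n\to\infty$ of simultaneous return times needed in step (ii). Second, the function $H$ in the expressions you quote is minus the Legendre transform of the pressure function $P$, a concave analytic function with $H(0)=1$, $\nabla H(0)=0$ and $H''(0)=-\operatorname{Cov}(N)^{-1}$, not a compactly supported density; and there is no ``cancellation of cover displacements between the two factors'': $\Gamma_2$ is an arbitrary discrete subgroup and all of the symbolic-dynamics and drift analysis takes place on the $\Gamma_1$ factor alone.
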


The reason  we assume $\Gamma_1$ is a $\mathbb{Z}$ or $\mathbb{Z}^2$-cover is that  only for $\mathbb{Z}$ and $\mathbb{Z}^2$-covers, the geodesic flow is ergodic with respect to the Haar measures \cite{Re} and this property is essentially used in the proof of the main theorem.

\begin{cor}
\label{cor of joining}
Let $\Gamma_1$ be as in Theorem \ref{main thm about ergodic joining}. Suppose $\Gamma_2$ is a discrete subgroup of $G$ such that the $U$-action is ergodic on $(\Gamma_2\backslash G, m^{\operatorname{Haar}}_{\Gamma_2})$. Then $Z$ admits a $U$-joining if and only if $\Gamma_1$ and $\Gamma_2$ are commensurable with each other, up to a conjugation.
\end{cor}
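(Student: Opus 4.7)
\noindent\textit{Proof plan for Corollary \ref{cor of joining}.}

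One direction is immediate from Definition \ref{finite cover self-joining}: if $\Gamma_1$ and $g_0^{-1}\Gamma_2g_0$ are commensurable for some $g_0\in G$, then the associated finite cover self-joining is a locally finite $\Delta(U)$-invariant measure on $Z$ whose projections to $\Gamma_i\backslash G$ are positive multiples of $\m_{\Gamma_i}$, and hence is a $U$-joining.

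For the converse, the plan is to pass to an ergodic component of an arbitrary $U$-joining and then invoke Theorem \ref{main thm about ergodic joining}. Let $\mu$ be any locally finite $U$-joining on $Z$. I would first take the $\Delta(U)$-ergodic decomposition of the $\sigma$-finite measure $\mu$, writing $\mu=\int_X\mu_x\,d\nu(x)$ with each $\mu_x$ locally finite, $\Delta(U)$-invariant, and $\Delta(U)$-ergodic. Each marginal $(\pi_i)_*\mu$ is proportional to $\m_{\Gamma_i}$, and $\m_{\Gamma_i}$ is $U$-ergodic (by hypothesis for $i=2$; and, for $i=1$, because the geodesic flow on $\mathbb{Z}$ and $\mathbb{Z}^2$-covers is ergodic by Rees \cite{Re}, which classically yields ergodicity of the horocycle flow). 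The essential uniqueness of the ergodic decomposition of a $\sigma$-finite measure then forces $(\pi_i)_*\mu_x$ to be proportional to $\m_{\Gamma_i}$ for $\nu$-almost every $x$. Picking any such $x$ for which both projections are nonzero, $\mu_x$ is an ergodic $U$-joining; Theorem \ref{main thm about ergodic joining} identifies $\mu_x$ with a finite cover self-joining, producing $g_0\in G$ with $\Gamma_1$ commensurable to $g_0^{-1}\Gamma_2g_0$, i.e., $\Gamma_1$ and $\Gamma_2$ are commensurable up to conjugation.

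The main obstacle I expect is the ergodic decomposition step in the locally finite (rather than probability) setting: one needs a measurable decomposition of the $\sigma$-finite joining $\mu$ into $\Delta(U)$-ergodic Radon measures, together with a uniqueness statement strong enough to force a generic projection $(\pi_i)_*\mu_x$ to be proportional to the ergodic Haar measure $\m_{\Gamma_i}$ rather than to vanish on a set of full $\nu$-measure. Once this is arranged — for instance via a Hopf-type ergodic decomposition of Radon measures on the Polish space $Z$ — the remainder of the argument is a purely formal reduction to Theorem \ref{main thm about ergodic joining}.
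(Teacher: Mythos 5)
Your proposal is correct and follows essentially the same route as the paper: the paper's proof is precisely to disintegrate an arbitrary $U$-joining into $\Delta(U)$-ergodic components, note that (by ergodicity of each marginal Haar measure) almost every component is itself an ergodic $U$-joining, and then apply Theorem \ref{main thm about ergodic joining}; the easy direction is read off from Definition \ref{finite cover self-joining} just as you say. The technical point you flag about ergodic decomposition of locally finite measures is real but standard, and the paper treats it as such.
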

Under our assumption, any $U$-joining measure on $Z$ can be disintegrated into an integral over a probability space of a family of $U$-ergodic joinings. Thus Corollary \ref{cor of joining} is an immediate application of Theorem \ref{main thm about ergodic joining}.

Similar to the finite joining case, we can deduce the classification of $U$-equivariant factor maps from the classification of joinings:
\begin{cor}
\label{factor thm}
Let $\Gamma$ be a $\mathbb{Z}$ or $\mathbb{Z}^2$-cover. Let $(Y,\nu)$ be a measure space with a locally finite $U$-invariant measure $\nu$. Suppose $p:(\Gamma\backslash G,\m_{\Gamma})\to (Y,\nu)$ is a $U$-equivariant factor map, that is, $p_{*}m_{\Gamma}=\nu$. Then $(Y,\nu)$ is isomorphic to $(\Gamma_0\backslash G,\m_{\Gamma_0})$ where $\Gamma_0$ is a discrete subgroup of $G$ containing $\Gamma$ as a finite index subgroup. Moreover, the map $p$ can be conjugated to the canonical projection $\Gamma\backslash G\to \Gamma_0\backslash G$.
\end{cor}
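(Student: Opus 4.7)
The plan is to apply Theorem~\ref{main thm about ergodic joining} to the relatively independent self-joining of $(\Gamma\backslash G,m_\Gamma)$ over $(Y,\nu)$ induced by $p$. Disintegrating $m_\Gamma=\int_Y\lambda_y\,d\nu(y)$ along $p$, I set
\begin{equation*}
\mu:=\int_Y\lambda_y\otimes\lambda_y\,d\nu(y)\quad\text{on }Z:=\Gamma\backslash G\times\Gamma\backslash G.
\end{equation*}
The $U$-equivariance of $p$ together with the $U$-invariance of $\nu$ forces $(u_t)_*\lambda_y=\lambda_{y\cdot u_t}$ for $\nu$-a.e.\ $y$, so $\mu$ is $\Delta(U)$-invariant; both marginals recover $m_\Gamma$, and $\mu$ is a locally finite $U$-joining concentrated on the fibered set $\{(x_1,x_2):p(x_1)=p(x_2)\}$.

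Next I would apply Theorem~\ref{main thm about ergodic joining} to decompose $\mu$ into ergodic components. Each such component is a finite cover self-joining associated with some $g_0\in\Comm$ and a possible $U$-translation parameter. Combining the constraint $p(x_1)=p(x_2)$ with the $U$-equivariance of $p$ along such a component shows that any nontrivial translation parameter would force a nonzero $u_t$ to act trivially on $\nu$-a.e.\ fiber of $p$, which is inconsistent with the simultaneous presence of the untranslated diagonal component in the decomposition; hence every ergodic component is the pushforward of $m_{\Gamma\cap g_0^{-1}\Gamma g_0}$ via the map $[g]\mapsto([g],[g_0 g])$ for some $g_0\in\Comm$.

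Finally I would define
\begin{equation*}
\Gamma_0:=\{g_0\in G:p([g_0 g])=p([g])\ \text{for $m_\Gamma$-a.e.\ }[g]\in\Gamma\backslash G\}.
\end{equation*}
A short check shows $\Gamma_0$ is a subgroup of $G$ containing $\Gamma$, and the previous step yields $\Gamma_0\subseteq\Comm$. The crucial finiteness $[\Gamma_0:\Gamma]<\infty$ should follow from the local finiteness of $\mu$: each $\Gamma$-coset in $\Gamma_0$ contributes a positive multiple of $m_\Gamma$ to the marginal $\pi_1{}_*\mu$, and since $\pi_1{}_*\mu=m_\Gamma$ is locally finite, only finitely many cosets can appear. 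Consequently $\Gamma_0$ is a discrete overgroup of $\Gamma$ of finite index, $p$ factors through a $U$-equivariant map $\bar p:(\Gamma_0\backslash G,m_{\Gamma_0})\to(Y,\nu)$, and $\bar p$ is essentially injective by the very definition of $\Gamma_0$; this identifies $(Y,\nu)$ with $(\Gamma_0\backslash G,m_{\Gamma_0})$ and conjugates $p$ to the canonical projection $\Gamma\backslash G\to\Gamma_0\backslash G$. The main obstacles I foresee are ruling out the nontrivial translation parameters in the ergodic decomposition and making the finiteness count rigorous from local finiteness alone.
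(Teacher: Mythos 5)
Your overall route---form the relatively independent self-joining $\bar\mu=\int_Y\lambda_y\otimes\lambda_y\,d\nu(y)$ over $Y$, decompose it into ergodic $U$-joinings, classify each component by Theorem~\ref{main thm about ergodic joining}, and assemble $\Gamma_0$ from the resulting elements of $\Comm$---is exactly the paper's strategy, but the two steps you flag as ``obstacles'' are genuine gaps, and the first is the heart of the matter. The finiteness $[\Gamma_0:\Gamma]<\infty$ does \emph{not} follow from local finiteness of the marginal: each ergodic component is some constant $c_i>0$ times the pushforward of $m_{\Gamma\cap h_i^{-1}\Gamma h_i}$, and the only constraint imposed by $(\pi_1)_*\bar\mu=m_\Gamma$ is that the weights $c_i\,[\Gamma:\Gamma\cap h_i^{-1}\Gamma h_i]$ sum to $1$; nothing forbids infinitely many components with summable weights (a countable sum $\sum_i 2^{-i}\mu_i$ of suitably normalized finite cover self-joinings is a perfectly good locally finite $U$-joining with marginal $m_\Gamma$). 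What the paper proves first (Proposition~\ref{finiteness of factor}) is that the conditional measures over $p$ are purely atomic and uniformly distributed on exactly $l_0$ points of a fixed conull set for $\nu$-a.e.\ $y$; this forces each ergodic component of $\bar\mu$ to carry fiberwise mass at least $1/l_0$, whence at most $l_0$ components. Establishing that atomicity is the substantial part of the corollary: it requires re-running the window-property machinery of Theorem~\ref{A-invariance of joining} and Lemma~\ref{not invariant} on the graph joining $[g]\mapsto(p([g]),[g])$ in $Y\times\Gamma\backslash G$ to rule out invariance under $\{e\}\times U$, and your proposal contains no substitute for it.

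Second, your reason for discarding the translation parameters does not work. A component of the form $[g]\mapsto([g],[h_igu_i])$ with $h_i\notin\Gamma$ only yields $p(\Gamma g)=p(\Gamma h_i g u_i)$ for a.e.\ $g$; this does not say that any nonzero $u_t$ acts trivially on the fibers of $p$, and composing such a component with the diagonal one gives no contradiction. The paper's actual argument replaces $g$ by $ga_s$ for a fixed $s\neq 0$, observes that the translation parameters transform into $b_i=p_i(1-e^{-s})$, and then uses the antisymmetry $b(x,y)=-b(y,x)$ (coming from $U$-equivariance) together with the cocycle identity $b(x,z)=b(y,z)-b(y,x)$ on the \emph{finite} set $\{b_1,\ldots,b_n\}$ to force all $b_i=0$ by a maximality argument---so this step, too, relies on the finiteness of the fibers that you have not proved. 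Once these two points are supplied, your construction of $\Gamma_0$ and the identification of $(Y,\nu)$ with $(\Gamma_0\backslash G,m_{\Gamma_0})$ go through as you describe.
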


Let $A$ be the diagonal group in $G$. As another application of the joining classification theorem, we  obtain a classification of $\Delta(AU)$-invariant measures  similar to \cite{MO1}:
\begin{cor}
\label{AU invariant measure on product}
Let $\Gamma_1$ be a $\mathbb{Z}$ or $\mathbb{Z}^2$-cover and let $\Gamma_2$ be a cocompact lattice of $G$. Any $\Delta (AU)$-invariant, ergodic, conservative, infinite Radon measure $\mu$ on $\Gamma_1\backslash G\times \Gamma_2\backslash G$ is one of the following:
\begin{enumerate}
\item $\mu$ is the product measure $m^{\operatorname{Haar}}_{\Gamma_1}\times m^{\operatorname{Haar}}_{\Gamma_2}$;

\item $\mu$ is the pushforward of the Haar measure on $\Gamma_1\cap g_0^{-1}\Gamma_2g_0\backslash G$ through the map:
\begin{align*}
\phi :\Gamma_1\cap g_0^{-1}\Gamma_2g_0\backslash G &\to \Gamma_1\backslash G\times \Gamma_2\backslash G\\
[g] &\mapsto ([g],[g_0g]),
\end{align*}
where $g_0$ is some element of $G$ so that $[\Gamma_1: \Gamma_1\cap g_0^{-1}\Gamma_2 g_0]<\infty$.
\end{enumerate}
\end{cor}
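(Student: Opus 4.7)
The plan is to apply the $\Delta(U)$-ergodic decomposition to reduce to Theorem \ref{main thm about ergodic joining}, and then exploit the $\Delta(A)$-ergodicity to reassemble. Write $\mu=\int_{\Omega}\mu_{\omega}\, d\nu(\omega)$, where each $\mu_{\omega}$ is a locally finite $\Delta(U)$-invariant ergodic Radon measure on $Z$. Since $\Delta(A)$ normalizes $\Delta(U)$ and $\mu$ is $\Delta(AU)$-ergodic, the $\Delta(A)$-action descends to an ergodic action on $(\Omega,\nu)$, and it suffices to classify $\nu$-a.e.\ ergodic component and then reassemble using this ergodicity.

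The main technical step is a dichotomy for each component. Compactness of $\Gamma_2\backslash G$ together with the fact that every $\sigma$-finite $U$-invariant Borel measure on $\Gamma_2\backslash G$ is a multiple of $m^{\operatorname{Haar}}_{\Gamma_2}$ (a consequence of unique ergodicity of the horocycle flow on a cocompact quotient plus a Radon-Nikodym argument) splits the components into two classes: those whose projection $(\pi_2)_*\mu_{\omega}$ is a \emph{finite} multiple of $m^{\operatorname{Haar}}_{\Gamma_2}$, and those whose projection is \emph{not $\sigma$-finite}. For the first class, a parallel argument on the $\Gamma_1$-side using the Ledrappier-Sarig characterization of $m^{\operatorname{Haar}}_{\Gamma_1}$ as the unique $U$-ergodic measure with a generalized law of large numbers makes $\mu_{\omega}$ into a $U$-joining in the sense of Definition \ref{finite cover self-joining}, so Theorem \ref{main thm about ergodic joining} forces $\mu_{\omega}$ to be a finite cover self-joining, associated with some element $g_{\omega}\in G$. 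For the second class, I would argue directly that the only $\Delta(U)$-ergodic Radon measure on $Z$ with non-$\sigma$-finite $\pi_2$-pushforward is, up to scaling, the product $m^{\operatorname{Haar}}_{\Gamma_1}\times m^{\operatorname{Haar}}_{\Gamma_2}$. This second alternative is the principal obstacle: one must rule out pathological infinite $U$-invariant measures on $\Gamma_2\backslash G$ arising as pushforwards through the non-proper map $\pi_2$, and I expect this to require a careful use of conservativity together with the normalizing $\Delta(A)$-action.

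To finish, apply the $\Delta(A)$-ergodicity on $(\Omega,\nu)$. In the finite-cover branch the assignment $\omega\mapsto g_{\omega}$, taken modulo the ambiguity built into Definition \ref{finite cover self-joining} (the coset data and the $u_t$-translation), is essentially $\Delta(A)$-invariant, hence constant $\nu$-a.e.; reassembling $\mu$ then yields case (2). In the product branch $\nu$-almost every $\mu_{\omega}$ already equals $m^{\operatorname{Haar}}_{\Gamma_1}\times m^{\operatorname{Haar}}_{\Gamma_2}$, and integration recovers case (1). Infiniteness of $\mu$ is used throughout to rule out trivial possibilities such as a $\Delta(A)$-invariant delta measure on a closed orbit.
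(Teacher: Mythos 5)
Your proposal has two genuine gaps, and the paper's route avoids both of them.

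First, passing to the $\Delta(U)$-ergodic decomposition does not put you in a position to apply Theorem \ref{main thm about ergodic joining}: that theorem classifies $U$-joinings \emph{with respect to the pair of Haar measures}, i.e.\ measures whose projections to $\Gamma_1\backslash G$ and $\Gamma_2\backslash G$ are proportional to $m_{\Gamma_1}$ and $m_{\Gamma_2}$. For a single component $\mu_\omega$ there is no reason for $(\pi_1)_*\mu_\omega$ to be Haar: on a $\mathbb{Z}^d$-cover there is an uncountable family of mutually singular $U$-invariant ergodic Radon measures (Babillot--Ledrappier, Sarig), and the Ledrappier--Sarig characterization of Haar as the unique one admitting a generalized law of large numbers does not help unless you can verify that property for the pushforward, which you do not. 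The paper never decomposes under $\Delta(U)$; it keeps the $\Delta(AU)$-ergodic measure $\mu$ intact, notes that $(\pi_1)_*\mu$ is an $AU$-invariant ergodic measure on $\Gamma_1\backslash G$ and invokes Babillot's theorem to conclude $(\pi_1)_*\mu=m_{\Gamma_1}$, and then observes that the proof of Theorem \ref{main thm about ergodic joining} runs for $\mu$ itself provided $\mu$ is not invariant under $\{e\}\times U$ (that non-invariance is exactly what Corollary \ref{A-invariance} needs).

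Second, your product branch is asserted rather than proved: the claim that the only $\Delta(U)$-ergodic Radon measure on $Z$ with non-locally-finite $\pi_2$-pushforward is the product is precisely the hard point, and you acknowledge you only ``expect'' it to follow from conservativity. The paper sidesteps this entirely by dichotomizing on whether $\mu$ is invariant under $\{e\}\times U$. In the invariant case it disintegrates $\mu$ over $(\pi_1)_*\mu=m_{\Gamma_1}$ into probability fiber measures $\mu_x$ on $\{x\}\times\Gamma_2\backslash G$; these are $U$-invariant, so Furstenberg's unique ergodicity on the compact quotient $\Gamma_2\backslash G$ gives $\mu_x=m_{\Gamma_2}$ and hence $\mu=m_{\Gamma_1}\times m_{\Gamma_2}$. (The paper also proves, via countability of $\operatorname{Comm}(\Gamma_1;\Gamma_2)$, that the product measure really is $\Delta(AU)$-ergodic, a point your reassembly step would also need.) I would rework the argument along these lines: project, apply Babillot, disintegrate, and split on $\{e\}\times U$-invariance rather than on the $\Delta(U)$-ergodic decomposition.
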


\subsection*{On the proof of Theorem \ref{main thm about ergodic joining}}Our proof is loosely modeled on Mohammadi-Oh's proof of classification of infinite $U$-joining measures for geometrically finite discrete subgroups \cite{MO}. In their proof, they utilize a close relation between Burger-Roblin measures and Bowen-Margulis-Sullivan measures (which will be called BR measures and BMS measures respectively for short) and the finiteness of BMS measures is crucially used. However, in our setting, both BR measures and  BMS measures are Haar measures and hence such a passage to finite measures is not available. Here we  discuss some of the main steps and difficulties.

One of the key ideas in Ratner's proof \cite{Ratner} as well as our proof is to use the \emph{polynomial like behavior} of unipotent flows to construct new invariants of a $U$-joining in concern. This idea is also used in Margulis' proof of Oppenheim's conjecture \cite{Mar} using topological argument. To utilize this property, we need to demonstrate that the return times of a typical orbit to a fixed compact set has enough \emph{self similarities}. More precisely, we show

 \begin{thm}
 \label{window}
 Suppose $\Gamma$ is a $\mathbb{Z}^d$-cover for some positive integer $d$. For any small $0<\eta<1$, there exists $0<r=r(\eta)<1$ such that for any non-negative $\psi\in C_c(\Gamma\backslash G)$ and for almost every $x\in \Gamma\backslash G$, there exists $T_0=T_0(\psi,x)>0$ so that
 \begin{equation*}
 \int_{0}^{rT}\psi(xu_t)dt\leq \eta\int_{0}^{T} \psi(xu_t)dt \,\,\,\text{for all}\,\,\,T\geq T_0.
 \end{equation*}
 \end{thm}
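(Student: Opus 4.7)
My plan is to reduce Theorem \ref{window} to establishing a pointwise polynomial-type asymptotic for the horocycle ergodic integral $F(T;x) := \int_0^T \psi(xu_t)\,dt$. Specifically, I aim to show that on a $\mathbb{Z}^d$-cover of a compact hyperbolic surface, for Haar-a.e.\ $x \in \Gamma\backslash G$ and every nonnegative $\psi \in C_c(\Gamma\backslash G)$ with $\int\psi\,d\mm > 0$,
\[
\lim_{T\to\infty}\frac{F(T;x)}{a_T} = c(\psi)\,\kappa(x),
\]
where $a_T$ is regularly varying of strictly positive index $\alpha$. In the spirit of the generalized law of large numbers of Ledrappier--Sarig referenced in the introduction, the expected normalizations for a cover of a compact hyperbolic surface are $a_T \asymp T^{1/2}$ for $d = 1$ (so $\alpha = 1/2$) and $a_T \asymp T/\log T$ for $d = 2$ (so $\alpha = 1$); both are regularly varying of positive index. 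Granted such an asymptotic, the ratio $F(rT;x)/F(T;x)$ converges pointwise to $a_{rT}/a_T \to r^\alpha < 1$, and choosing $r = (\eta/2)^{1/\alpha}$ while absorbing the rate of convergence into $T_0(\psi,x)$ yields the required inequality $F(rT;x) \le \eta F(T;x)$ for all $T \ge T_0$.

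To establish the asymptotic, I would exploit the renormalization identity $a_s u_t a_{-s} = u_{te^s}$. Setting $s = \log T$ rewrites
\[
F(T;x) = T\int_0^1 \psi(xa_{\log T}u_\tau a_{-\log T})\,d\tau,
\]
expressing a long-horocycle average as a unit-length horocycle average from the geodesically-translated point $xa_{\log T}$ evaluated against the geodesic-stretched test function. Polynomial local mixing of the geodesic flow on $\Gamma\backslash G$, decaying at rate $t^{-d/2}$ as a consequence of the central limit theorem for the $\mathbb{Z}^d$-valued displacement cocycle combined with exponential mixing of the geodesic flow on the compact base $\Gamma'\backslash G$, then drives the averaged quantity to a deterministic multiple of $\int\psi\,d\mm$, yielding the asymptotic. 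Passage from $L^2$-mean to pointwise convergence on a Haar-conull set of $x$ is handled by a Borel--Cantelli argument along a geometric subsequence $T_k = q^k$, with interpolation between the $T_k$'s using the monotonicity of $F(\cdot;x)$ and the slow variation of $a_T$.

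The main obstacle I anticipate is the pointwise upgrade for $d = 2$, where the normalization $T/\log T$ differs from the trivial linear rate only by a slowly-varying factor, so the error terms in the local mixing estimate must be controlled uniformly and with enough margin. A single Haar-null exceptional set can be arranged to work simultaneously for all $\psi$ in a countable dense subset of the nonnegative part of $C_c(\Gamma\backslash G)$, and the conclusion for general $\psi$ then follows by standard approximation together with Hopf's ratio ergodic theorem applied to the $U$-action (known to be conservative and ergodic for $d = 1, 2$). Once the pointwise asymptotic is in hand, the multiplicative comparison $a_{rT}/a_T \to r^\alpha < 1$ is elementary, and Theorem \ref{window} follows directly.
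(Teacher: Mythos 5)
Your reduction rests on the claim that for a.e.\ $x$ the ergodic integral $F(T;x)=\int_0^T\psi(xu_t)\,dt$ satisfies a pointwise asymptotic $F(T;x)/a_T\to c(\psi)\kappa(x)$ with a deterministic, regularly varying normalization $a_T$. This is precisely what fails in infinite ergodic theory: by Aaronson's theorem, for a conservative ergodic infinite-measure-preserving flow there is \emph{no} sequence $a_T$ for which such a.e.\ convergence to a positive finite limit holds, and the Ledrappier--Sarig ``generalized law of large numbers'' is deliberately a weaker (second-order / rational ergodicity) statement. One can see the obstruction concretely in this setting: the paper's Key Lemma (Lemma \ref{key lemma}) shows that for a suitable set $E$,
\begin{equation*}
\int_0^T\chi_E(gu_t)\,dt \;=\; \frac{e^{\pm\epsilon}m_\Gamma(E)}{(2\pi\sigma\ln T)^{d/2}}\cdot T\cdot\exp\!\Big(\ln T\,\big(H\big(\tfrac{\xi_{\ln T}(g)}{\ln T}\big)-1\big)\Big),
\end{equation*}
and by the law of the iterated logarithm the exponential factor oscillates by unbounded multiplicative amounts (of order a power of $\ln\ln\ln T$) along a.e.\ orbit, so $F(T;x)(\ln T)^{d/2}/T$ does not converge. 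Your proposed normalizations are also off: the correct second-order rate for a $\mathbb Z^d$-cover is $T/(\ln T)^{d/2}$, not $T^{1/2}$ for $d=1$.

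The window property survives despite these fluctuations because it only requires comparing $F(rT;x)$ with $F(T;x)$, and replacing $T$ by $rT$ changes the ``logarithmic time'' $\ln T$ only by the additive constant $\ln r$; hence $\xi_{\ln(rT)}(g)$ and $\xi_{\ln T}(g)$ differ by a bounded amount, the fluctuating exponential factors are comparable up to a controlled error, and the decisive gain comes from the factor $rT/T=r$ together with a Taylor expansion of $H$ at $0$ calibrated against the LIL. That comparison, not an asymptotic law, is the content of the paper's proof of Theorem \ref{window lemma}; the final passage from $\chi_E$ to general $\psi$ via genericity (Hopf-ratio type convergence, Theorem \ref{generic pts}) is the one step of your outline that matches the paper. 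To repair your argument you would have to abandon the pointwise asymptotic and prove a two-sided multiplicative estimate of the above form directly, which is essentially what the paper does via symbolic dynamics and a local limit theorem for the $\mathbb Z^d$-coordinate cocycle.
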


This is one of the difficulties in extending Ratner's rigidity theorems to infinite volume setting. For geometrically finite discrete subgroup, Flaminio and Spatzier (\cite{FS, FS 1}) as well as Mohammadi and Oh \cite{MO} overcome this difficulty by using the self similarities of the conditional measure of the BMS measure. In our setting, we use symbolic description of the geodesic flow over the unit tangent bundle of $\Gamma\backslash \mathbb{H}^2$ and some ideas in Ledrappier and Sarig's proof about the rational ergodicity of the horocycle flows for $\mathbb{Z}^d$-covers (\cite{LS}, see also \cite{SS}). As an application of Theorem \ref{window}, we classify the orbit closures of $\mathbb{Z}$ or $\mathbb{Z}^2$-cover group in the unit tangent bundle of compact hyperbolic surfaces in the appendix (Theorem \ref{cover group orbit closure}).
 
 With Theorem \ref{window} available, we establish the following two properties about an arbitrary ergodic $U$-joining $\mu$ on $Z$:
\begin{enumerate}
\item almost all fibers of projection of $\mu$ on $\Gamma_1\backslash G$ are finite;
\item $\mu$ is invariant under the diagonal embedding of $A$ (up to conjugation).
\end{enumerate}

 Let
\begin{equation}
\label{expanding horocyclic group}
U^+:=\left\{u^+_t:=\begin{pmatrix} 1 & 0\\ t & 1\end{pmatrix}:t\in \mathbb{R}\right\}
\end{equation}
 be the expanding horocyclic group, opposite to the subgroup $U$. Parametrize the elements of $A$ by $a_s:=\begin{pmatrix} e^{\frac{s}{2}} & 0\\ 0 & e^{-\frac{s}{2}}\end{pmatrix}$. Extending the invariance of $\mu$ under $\Delta(U^+)$ involves showing that a measurable  $AU$-equivariant set-valued map $\mathcal{Y}:\Gamma_1\backslash G\to \Gamma_2\backslash G$ is also $U^+$-equivariant.  The rough idea is to demonstrate that if both $xa_{-s}$ and $xu_r^{+}a_{-s}$ lie in some {\it good} compact subset, then the $U$-orbits of $\mathcal{Y}(xu_r^{+})u^+_{-r}a_{-s}$ and $\mathcal{Y}(x)a_{-s}$ do not diverge on average. More precisely, we show that
  \begin{equation}
 \label{supremum}
 \sup_{t\in [0,e^s]}d(\mathcal{Y}(xu^{+}_ra_{-s})u^{+}_{-e^{-s}r}u_t,\mathcal{Y}(xa_{-s})u_t)=O(1).
 \end{equation}
 Such an argument is used by Ratner \cite{Ratner} as well as by Flaminio and Spatzier \cite{FS, FS 1}. For the case when $\Gamma_1$ and $\Gamma_2$ are lattices, Birkhoff ergodic theorem  and polynomial divergence of horocycle flows are two key inputs to obtain this estimate.
 
 Letting $\mathcal{Y}(xu^{+}_ra_{-s})u^{+}_{-e^{-s}r}=\mathcal{Y}(xa_{-s})g_s$, a simple matrix computation yields $a_{-s}g_sa_s=O(e^{-s})$. Therefore, 
 \begin{equation*}
 d(\mathcal{Y}(xu_r^{+})u_{-r}^{+},\mathcal{Y}(x))=d(\mathcal{Y}(x)a_{-s}g_sa_s,\mathcal{Y}(x))=O(e^{-s}).
 \end{equation*}
 The ergodicity (and hence the conservativity) of the geodesic flow gives us an increasing sequence of $\{s_i\}$ so that $xa_{-s_i}$ and $xu_r^{+}a_{-s_i}$ lie in some {\it good} compact subset, which eventually shows that $\mathcal{Y}$ is $U^+$-equivariant. Now the assumption that $\Gamma_1$ is a $\mathbb{Z}$ or $\mathbb{Z}^2$-cover ensures the ergodicity of the geodesic flow, providing us the necessary dynamics between geodesic flows and horocycle flows. As $\Gamma_1\backslash G$ is of infinite measure, to achieve (\ref{supremum}), we make the most of the Hopf's ratio theorem for horocycle flows and geodesic flows with respect to a series of compact subsets chosen with calibration.  
 
 
 \subsection*{Notational convention}
 \begin{enumerate}
 \item For any positive number $a,b$ and $\epsilon$, we write $a=e^{\pm \epsilon} b$ to mean that  $e^{-\epsilon}b\leq a\leq e^{\epsilon} b$.
 
 \item For any discrete subgroup $\Gamma$ in $G$, denote the Haar measure on $\GaG$ by $m_{\Gamma}$. When there is no ambiguity about $\Gamma$, we simply denote it by $m$.
 \end{enumerate}
 \subsection*{Acknowledgments} 
 I would like to express my sincere gratitude to my advisor Hee Oh for suggesting this problem and for constant guidance. It would never haven been possible for me to take this work to completion without her incredible support and encouragement.  I would also like to thank  Ilya Gekhtman, Fran\c{c}ois Ledrappier, Amir Mohammadi and Dale Winter for illuminating and valuable discussions.

\section{Symbolic dynamics}
\label{symbolic dynamics}

For the rest of the paper, fix $\Gamma_0$ a cocompact lattice of $G=\operatorname{PSL}_2(\mathbb{R})$ and $\Gamma$  a normal subgroup of $\Gamma_0$ with $\Gamma\backslash \Gamma_0\cong \mathbb{Z}^d$ for some positive integer $d$. Recall that we set
\begin{equation*}
A=\left\{a_s:=\begin{pmatrix} e^{s/2} & 0\\ 0 & e^{-s/2}\end{pmatrix}:s\in \mathbb{R}\right\}.
\end{equation*}
The right translation action of $a_s$ on $\GaG$ corresponds to the geodesic flow on the unit tangent bundle of $\Gamma\backslash \mathbb{H}$ which can be identified with $\GaG$. Recall the groups  $U$ and $U^+$ defined in (\ref{horocyclic group}) and (\ref{expanding horocyclic group}) respectively.

In this section, we  describe  the geodesic flow on $\Gamma\backslash G$ as a suspension flow, whose base is a skew product over a subshift of finite type.  First recall some basic notions of symbolic dynamics.

A subshift of finite type with set of states $S$ and transition matrix $A=(t_{ij})_{S\times S}\,\,\,(t_{ij}\in \{0,1\})$ is the set
\begin{equation*}
\Sigma:=\{x=(x_i)\in S^{\mathbb{Z}}: t_{x_ix_{j}}=1\}
\end{equation*}
together with the action of the left shift map $\sigma: \Sigma\to \Sigma$, $\sigma(x)_k=x_{k+1}$ and the metric $d(x,y)=\sum_{k\in \mathbb{Z}}\frac{1}{2^{|k|}}(1-\delta_{x_ky_k})$. There is a one-sided version $\sigma:\Sigma^{+}\to \Sigma^{+}$ obtained by replacing $\mathbb{Z}$ by $\mathbb{N}\cup \{0\}$.

Suppose $F$ is a real-valued function on $\Sigma$ or $\Sigma^+$. The Birkhoff sums of  $F$ are denoted by $F_n$,
\begin{equation*}
F_n:=F+F\circ\sigma+\dots+F\circ\sigma^{n-1}.
\end{equation*}

\subsection*{Symbolic dynamics for the geodesic flow}
Fix $\Omega_0$ to be a connected relatively compact fundamental domain in $\GaG$ for the left action of $\Gamma\backslash \Gamma_0$. As $\Gamma\backslash \Gamma_0\cong \mathbb{Z}^d$, the group $\mathbb{Z}^d$ acts on $\Gamma\backslash G$. For every $\xi\in \mathbb{Z}^d$, we denote the left action of $\xi$ on $\GaG$ by $D_{\xi}$. 

\begin{defn}
\label{Z-coordinate}
For every $g\in \GaG$, we call the unique integer $\xi(g)\in \mathbb{Z}^d$ satisfying $g\in D_{\xi(g)}\Omega_0$ the  $\mathbb{Z}^d$-coordinate of $g$. 
\end{defn}

By a lifting argument of Bowen-Series symbolic dynamics of the geodesic flow on $\Gamma_0\backslash G$ (see \cite{BS, Series 1, Series 2, PS}), we obtain the following characterization of the geodesic flow on $\GaG$:

\begin{lem}
\label{symbolic coding}
There exist a topologically mixing two-sided subshift of finite type $(\Sigma, \sigma)$, a H\"{o}lder continuous function $\tau: \Sigma \to \mathbb{R}$ which depends only on the non-negative coordinates, a function $f:\Sigma \to \mathbb{Z}^d$ such that $f(x)=f(x_0,x_1)$, a H\"{o}lder function $h:\Sigma \to \mathbb{R}$ and a H\"{o}lder continuous map $\pi: \Sigma \times \mathbb{Z}^d \times \mathbb{R} \to \GaG$ satisfying the following properties:
\begin{enumerate}
\item $\tau^*:=\tau+h-h\circ \sigma$ is non-negative, and there exists a constant $n_0$ such that $\inf_{x\in \Sigma} \tau^*_{n_0}(x)>0$.
\item Let 
\begin{equation*}
(\Sigma \times \{0\})_{\tau^*}:=\{(x,0,t):x\in \Sigma,\,0\leq t <\tau^*(x) \}.
\end{equation*}
The restriction map $\pi:(\Sigma \times \{0\})_{\tau^*}\to \Omega_0$ is a surjective finite-to-one map. Moreover, there exists  a countable sequence $\{g_i\} \subseteq \GaG$, such that  every $g\in \GaG$ outside $\cup_{i=1}^{\infty}g_iAU$ and $\cup_{i=1}^{\infty}g_iAU^+$ has exactly one preimage \cite{Series 1}.
\item For any $(\xi_0,t_0)\in \mathbb{Z}^d\times \mathbb{R}$, define the map $Q_{\xi_0,t_0}$ on $\Sigma\times \mathbb{Z}^d\times \mathbb{R}$ by $Q_{\xi_0,t_0}(x,\xi,t)=(x,\xi+\xi_0,t+t_0)$.  Then $\pi\circ Q_{\xi_0,t_0}(x,\xi,t)=D_{\xi_0}(\pi(x,\xi,t)a_{t_0})$ for all $(x,\xi, t)\in \Sigma \times \mathbb{Z}^d\times \mathbb{R}$. 
\item $\pi \circ T_{f,-\tau^*}=\pi$, where $T_{f,-\tau^*}(x,\xi,t)=(\sigma x, \xi+f(x), t-\tau^*(x))$. 
\item Suppose $g=\pi (x,\xi,t)$, $g'=\pi(x',\xi',t')$. If there exist $p,q\geq 0$ such that
\begin{align*}
 x_p^{\infty}&=(x')_{q}^{\infty}\, (\text{i.e.},\,x_{p+i}=x'_{q+i}\,\,\,\text{for any} \,\,i\in \mathbb{N});\\
 t-t' &=h(x)-h(x')+\tau_p(x)-\tau_q(x');\\
 \xi-\xi'& =f_q(x')-f_p(x),
\end{align*}
then $g'=gu_s$ for some $s\in \mathbb{R}$.
\item Suppose $g=\pi(x,\xi,t)$, $0\leq t< \tau^*(x)$. For every $s\in \mathbb{R}$, all but at most countably many points $g'\in gUa_s$ have a unique representation $g'=\pi(x',\xi',t')$ such that $0\leq t'<\tau^{*}(x')$ and there exist $p,q$ with $(x')_p^{\infty}=x_q^{\infty}$.
\end{enumerate}
\end{lem}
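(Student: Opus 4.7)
The plan is to lift the classical Bowen-Series symbolic coding for the geodesic flow on the cocompact quotient $\Gamma_0\backslash G$ to a skew-product coding over the $\mathbb{Z}^d$-cover $\Gamma\backslash G$, and then perform a cohomological adjustment to obtain a one-sided roof function. I start from the Bowen-Series construction recalled in \cite{BS, Series 1, Series 2, PS}: the Markov partition of a fundamental domain for $\Gamma_0$ adapted to the stable/unstable foliations yields a topologically mixing two-sided subshift of finite type $(\Sigma,\sigma)$, a strictly positive H\"older roof $\rho:\Sigma\to\mathbb{R}_{>0}$, and a H\"older continuous semi-conjugacy $\pi_0$ from the suspension with roof $\rho$ onto $\Gamma_0\backslash G$, intertwining the suspension flow and the geodesic flow, which is finite-to-one and bijective outside a countable union of $AU$- and $AU^+$-orbits (the orbits of the partition boundary). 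Applying Sinai's lemma for H\"older cocycles over a subshift of finite type, I decompose $\rho=\tau+h-h\circ\sigma$ with $\tau$ H\"older depending only on non-negative coordinates and $h$ H\"older on $\Sigma$, and set $\tau^*:=\rho$; property (1) then holds with any $n_0\geq 1$ by strict positivity of $\rho$ on the compact space $\Sigma$.

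To lift to the cover, I use normality of $\Gamma$ in $\Gamma_0$ to identify $\Gamma_0/\Gamma\cong\mathbb{Z}^d$ and to track which sheet of the cover a symbolic orbit visits. The Bowen-Series construction makes each Markov transition from rectangle $x_0$ to $x_1$ correspond to a single element $\gamma(x_0,x_1)\in\Gamma_0$; reducing modulo $\Gamma$ gives the locally constant cocycle $f(x_0,x_1):=\gamma(x_0,x_1)\bmod \Gamma\in\mathbb{Z}^d$. Fixing a section $\iota:\Gamma_0\backslash G\to\Omega_0\subset\Gamma\backslash G$ identifying the base with the chosen fundamental domain $\Omega_0$ for the $\mathbb{Z}^d$-action, I define
\begin{equation*}
\pi(x,\xi,t):=D_{\xi}\bigl(\iota(\pi_0(x,t))\bigr).
\end{equation*}
Property (2) is then inherited from $\pi_0$ and the choice of $\Omega_0$; equivariance (3) follows since $D_{\xi_0}$ commutes with the right $G$-action together with $\mathbb{Z}^d$-periodicity; and the cocycle identity (4) is a direct check, as iterating $T_{f,-\tau^*}$ reflects one Markov step of length $\tau^*(x)$ combined with the cover displacement $f(x)$.

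For (5) and (6), the key point is that two points in $\Gamma\backslash G$ lie on a common $U$-orbit precisely when they lie on the same strong stable leaf of the geodesic flow, which is symbolically encoded by eventual coincidence of forward codings $x_p^\infty=(x')_q^\infty$. The displacement within the strong stable leaf is given by one-sided Birkhoff sums $\tau_p(x)-\tau_q(x')$, with the coboundary contributing the correction $h(x)-h(x')$, and the $\mathbb{Z}^d$-displacement is $f_q(x')-f_p(x)$; these give the formulas in (5). Property (6) then follows by applying (5) to $g'\in gUa_s$: apart from a countable set of exceptional $g'$ whose codes meet a partition boundary, a unique normalization $0\leq t'<\tau^*(x')$ with matching tails exists. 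The main technical obstacle throughout is bookkeeping around partition boundaries — verifying that the finite-to-one ambiguity in (2) and the exceptional sets in (6) are exactly the countable union of $AU$- and $AU^+$-orbits of partition boundaries, and that $f$ is genuinely locally constant as a function of $(x_0,x_1)$. Once these subtleties are addressed, the remaining properties follow essentially from the base Bowen-Series construction.
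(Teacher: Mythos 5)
Your overall strategy --- lift the Bowen--Series coding of the geodesic flow on $\Gamma_0\backslash G$ to a $\mathbb{Z}^d$-skew product, read off $f$ from the deck-transformation cocycle $\gamma(x_0,x_1)\bmod\Gamma$, and apply Sinai's lemma to one-side the roof function --- is exactly the route the paper takes (the paper itself only cites this lifting argument; it is carried out in detail in \cite{BS, Series 1, LS, BL2, SS}). However, your explicit formula $\pi(x,\xi,t)=D_{\xi}\bigl(\iota(\pi_0(x,t))\bigr)$ is wrong, and the step where you assert that (3) follows from commutation and that (4) is ``a direct check'' would fail. With this formula $\pi(x,\xi,t)$ lies in $D_{\xi}\Omega_0$ for every $t$, which is incompatible with the flow-equivariance $\pi(x,\xi,t+t_0)=\pi(x,\xi,t)a_{t_0}$ demanded by (3), since the geodesic orbit leaves $D_{\xi}\Omega_0$. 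Worse, because the base semiconjugacy satisfies $\pi_0(\sigma x,\,t-\tau^*(x))=\pi_0(x,t)$, substituting your formula into (4) yields $D_{\xi+f(x)}\bigl(\iota(\pi_0(x,t))\bigr)=D_{\xi}\bigl(\iota(\pi_0(x,t))\bigr)$, i.e.\ it forces $f\equiv 0$. The whole point of the skew product is that the $\xi$-coordinate must accumulate the deck transformations along the orbit rather than being reset into $\Omega_0$ at each time $t$.

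The repair is standard but it is the heart of the construction. Set $\tilde g(x):=\iota(\pi_0(x,0))\in\Omega_0$ and define $\pi(x,\xi,t):=D_{\xi}(\tilde g(x))\,a_t$ for all $t\in\mathbb{R}$; then (3) holds by definition. Since $\tilde g(x)a_{\tau^*(x)}$ and $\tilde g(\sigma x)$ project to the same point of $\Gamma_0\backslash G$, there is a unique $f(x)\in\mathbb{Z}^d$ with $\tilde g(x)a_{\tau^*(x)}=D_{f(x)}(\tilde g(\sigma x))$, which gives (4); one then identifies this $f(x)$ with $\gamma(x_0,x_1)\bmod\Gamma$ to see that it depends only on $(x_0,x_1)$. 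Note also that (2) is not ``inherited'' for an arbitrary pre-fixed fundamental domain: one must take $\Omega_0$ to be (essentially) the image $\pi\bigl((\Sigma\times\{0\})_{\tau^*}\bigr)=\{\tilde g(x)a_t:0\leq t<\tau^*(x)\}$ and verify that this set is a connected, relatively compact fundamental domain for the $\Gamma\backslash\Gamma_0$-action, which is how $\Omega_0$ is produced in the sources the paper cites. With these corrections, your treatment of (1), (5) and (6) is the standard one and goes through.
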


\subsection*{Symbolic coordinates}
For every $g_i\in \GaG$, the point described in Lemma \ref{symbolic coding} (2), choose a representation $g_i=\pi(x_i,\xi_i,t_i)$ such that $0\leq t_i<\tau^*(x_i)$. We call $(x,\xi,t)\in \Sigma\times \mathbb{Z}^d\times \mathbb{R}$  a symbolic coordinate for $g\in \GaG$, if
\begin{enumerate}
\item $g\notin \cup_{i=1}^{\infty}g_iAU$, $g=\pi(x,\xi,t)$, and $0\leq t<\tau^*(x)$;
\item $g \in g_iUa_s$, $g=\pi(x,\xi,t)$, $0\leq t <\tau^*(x)$, and $x_p^{\infty}=(x_i)_q^{\infty}$ for some $p,q$.
\end{enumerate}

Some points in $\GaG$ have more than one  symbolic coordinates. But for every $g\in \GaG$, the set of points in $gU$ with more than one symbolic coordinates is at most countable by Lemma \ref{symbolic coding} (2) and (6). In particular, for every $g$, the Birkhoff integral $\int_0^{T}f(gu_t)dt$ is determined by the $t'$s for which $gu_t$ has a unique symbolic coordinate. We may therefore safely ignore the points with more than one symbolic coordinates.
\subsection*{Ruelle's transfer operator and the Haar measure}  
 
 Consider the Ruelle's operator  $L_{-\tau}: C(\Sigma^{+})\to C(\Sigma^{+})$ given by 
\begin{equation*}
L_{-\tau}(\varphi)(x)=\sum_{\sigma y=x}e^{-\tau(y)}\varphi(y).
\end{equation*}
By Ruelle-Perron-Frobenius theorem, there exist a probability measure $\nu'$ on $\Sigma^+$ and a H\"{o}lder continuous function $\psi: \Sigma^{+}\to \mathbb{R}^{+}$ such that
\begin{equation}
\label{eigenfunction}
L_{-\tau}\psi=\psi,\,\,L^{*}_{-\tau}\nu'=\nu',\,\,\text{and}\,\,\int \psi d\nu'=1.
\end{equation}
The measure $\psi d\nu'$ is a shift invariant probability measure which  can be extended to the two-sided shift $\Sigma$. Denote this extension by $\nu$. 

Put
\begin{equation*}
\left(\Sigma\times \mathbb{Z}^d\right)_{\tau^*}:=\{(x,\xi,t):0\leq t\leq \tau^*(x)\}.
\end{equation*}
The following lemma is essentially in \cite{BM} (see also \cite{BL2}).
\begin{lem}
The Haar measure on $\GaG$, subject to the normalization $m_{\Gamma}(\Omega_0)=1$, is given by $\frac{1}{\int \tau^{*}d\nu}(\nu\times dm_{\mathbb{Z}^d}\times  dt)|_{(\Sigma\times \mathbb{Z}^d)_{\tau^{*}}}\circ \pi^{-1}$. 
\end{lem}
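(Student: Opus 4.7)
The plan is to bootstrap from the corresponding statement on the cocompact quotient $\Gamma_0\backslash G$ and then use the $\mathbb{Z}^d$-equivariance built into the symbolic coding to propagate the formula across the tiling $\Gamma\backslash G = \bigsqcup_{\xi\in\mathbb{Z}^d} D_\xi\Omega_0$. No dynamical input beyond what is in Lemma~\ref{symbolic coding} and the compact case of \cite{BM} is needed.

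First I would invoke the classical Bowen-Marcus/Series construction for the cocompact surface $\Gamma_0\backslash G$. The same subshift $(\Sigma,\sigma)$ with roof $\tau^*$ and the single-sheet restriction $\pi_0 := \pi|_{(\Sigma\times\{0\})_{\tau^*}} \to \Omega_0$ provide a symbolic description of the geodesic flow on $\Gamma_0\backslash G$: by Lemma~\ref{symbolic coding}(2), $\pi_0$ is finite-to-one and injective outside a countable union of $AU$ and $AU^+$-orbits. The Ruelle--Perron--Frobenius measure $\nu$ from (\ref{eigenfunction}) is the equilibrium state of $-\tau$, and the theorem of \cite{BM} (see also \cite{BL2}) asserts that the normalized Haar measure on $\Gamma_0\backslash G$ equals the corresponding suspension, i.e.
\[
m_{\Gamma_0} \;=\; \frac{1}{\int \tau^*\,d\nu}\,\bigl(\nu\times dt\bigr)\big|_{(\Sigma\times\{0\})_{\tau^*}}\circ \pi_0^{-1}.
\]

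Next, I would exploit Lemma~\ref{symbolic coding}(3) with $t_0 = 0$, which gives $\pi(x,\xi,t) = D_\xi\bigl(\pi(x,0,t)\bigr)$, so $\pi|_{(\Sigma\times\{\xi\})_{\tau^*}} = D_\xi \circ \pi_0$ for every $\xi \in \mathbb{Z}^d$. Since $\Omega_0$ is a fundamental domain for the left $\mathbb{Z}^d$-action $\{D_\xi\}$ on $\Gamma\backslash G$ and $m_\Gamma$ is $\mathbb{Z}^d$-invariant (as a $G$-invariant measure) with $m_\Gamma(\Omega_0) = 1$ by normalization, one has $m_\Gamma|_{D_\xi\Omega_0} = (D_\xi)_* m_{\Gamma_0}$. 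Summing over $\xi$ and substituting the formula above yields
\[
m_\Gamma \;=\; \frac{1}{\int \tau^*\,d\nu}\sum_{\xi\in\mathbb{Z}^d} (D_\xi)_* \Bigl((\nu\times dt)\big|_{(\Sigma\times\{0\})_{\tau^*}}\circ \pi_0^{-1}\Bigr) \;=\; \frac{1}{\int \tau^*\,d\nu}\bigl(\nu \times dm_{\mathbb{Z}^d}\times dt\bigr)\big|_{(\Sigma\times\mathbb{Z}^d)_{\tau^*}}\circ \pi^{-1},
\]
which is the desired identity.

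The main obstacle will be verifying that the countable exceptional set on which $\pi$ fails to be injective, once lifted across all $\mathbb{Z}^d$-translates, remains null for both the suspension measure and the Haar measure, so that the pushforward identity is measure-theoretically unambiguous; this reduces to the observation that the full exceptional set is a countable union of $AU$ and $AU^+$-orbits in $\Gamma\backslash G$, which is Haar-null, and that its $\pi$-preimage is a countable union of $A$-orbits in the base, which is $(\nu\times dm_{\mathbb{Z}^d}\times dt)$-null by Fubini and the non-atomicity of $\nu$. A small bookkeeping remark is that the cohomology relation $\tau^* = \tau + h - h\circ\sigma$ together with $\sigma$-invariance of $\nu$ gives $\int\tau^*\,d\nu = \int\tau\,d\nu$, so the normalization constant is the one produced by the Ruelle--Perron--Frobenius theorem.
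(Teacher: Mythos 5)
Your argument is correct and is essentially the one the paper is implicitly relying on when it cites \cite{BM} and \cite{BL2}: establish the suspension formula on the cocompact quotient $\Gamma_0\backslash G$ and transport it to the cover using the $\mathbb{Z}^d$-equivariance of $\pi$ from Lemma \ref{symbolic coding}(3), with the non-injectivity locus of $\pi$ dismissed as null on both sides. The paper gives no independent proof, so there is nothing further to compare.
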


\subsection*{Symbolic local  manifolds}
Suppose $g\in \GaG$ has a symbolic coordinate $(x,\xi,t)$  with $ 0\leq t<\tau^*(x)$. 
 Write $t=s+h(x)$. The symbolic local stable manifold of $g=\pi (x,\xi,s+h(x))$ is defined to be
\begin{equation*}
\W(g):=\pi\{(y,\xi,s+h(y)):y_0^{\infty}=x_0^{\infty}\}.
\end{equation*}
It follows from Lemma \ref{symbolic coding} (5) that $\W(g)\subset gU$. Lemma \ref{symbolic coding} also implies that if $\W(g)$ intersects $\W(g')$  with positive measure for another $g'\in \GaG$, then they are equal up to a set of measure 0.

Let the measure $l_g$ on $gU$ be given by the length measure
\begin{equation*}
l_g(\{gu_t: a<t<b\})=b-a.
\end{equation*}

\begin{lem}[Proposition 4.5 in \cite{BL2}]
\label{length of loc ss}
Suppose $g\in \GaG$ has a symbolic coordinate $(x,\xi,s+h(x))$. Then $$l_{g}[\W(g)]=e^{-s}\psi(x_0,x_1,\ldots)$$ where $\psi:\Sigma^{+}\to \mathbb{R}_{>0}$ is the eigenfunction of the Ruelle's transfer operator given as (\ref{eigenfunction}). 
\end{lem}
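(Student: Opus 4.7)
The plan is to exploit the self-similar partition of $\W(g)$ combined with the horocyclic scaling under the geodesic flow, reducing the formula to the Ruelle eigenfunction equation $L_{-\tau}\psi = \psi$. Set $\Phi(g) := l_g[\W(g)]$.

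First, I would establish a scaling reduction. From the commutation $u_t a_{\sigma} = a_{\sigma} u_{te^{-\sigma}}$, the flow $a_\sigma$ scales lengths along horocycles by the factor $e^{-\sigma}$, while on the symbolic side Lemma \ref{symbolic coding}(3) shows that $ga_\sigma$ has the same future $x_0^{\infty}$ and the same $\xi$ as $g$, with $s$ replaced by $s + \sigma$. Since $\W(g)$ visibly depends on $g$ only through the future $x_0^{\infty}$ and $s$ (the past coordinates are what parametrize $\W(g)$), this forces
$$\Phi(g) = e^{-s} F(x_0, x_1, \ldots)$$
for a single positive function $F$ on $\Sigma^{+}$, reducing the problem to showing $F = \psi$.

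Next I would derive an eigenfunction equation for $F$ via self-similarity. Partition
$$\W(g) = \bigsqcup_{b:\; t_{b, x_0} = 1} \W_b(g), \qquad \W_b(g) := \pi\{(y, \xi, s + h(y)) : y_0^{\infty} = x_0^{\infty},\; y_{-1} = b\}.$$
For each admissible $b$, apply Lemma \ref{symbolic coding}(4) in reverse: setting $y' = \sigma^{-1} y$ so that $y'_0 = b$ and $(y')_0^{\infty} = (b, x_0, x_1, \ldots)$, one computes
$$\pi(y, \xi, s+h(y)) = \pi\bigl(y',\; \xi - f(b, x_0),\; (s + \tau(b, x_0, x_1, \ldots)) + h(y')\bigr),$$
which identifies $\W_b(g)$ with the symbolic stable manifold $\W(g_b)$ of a point $g_b$ having future $(b, x_0, x_1, \ldots)$ and $s$-parameter $s + \tau(b, x_0, x_1, \ldots)$. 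Since $g_b \in gU$, the length measures $l_g$ and $l_{g_b}$ coincide, so $\Phi(g) = \sum_b \Phi(g_b)$. Substituting the form $\Phi(g) = e^{-s} F(x_0^{\infty})$ on both sides gives
$$F(x_0, x_1, \ldots) = \sum_{b:\; t_{b, x_0} = 1} e^{-\tau(b, x_0, x_1, \ldots)} F(b, x_0, x_1, \ldots) = (L_{-\tau}F)(x_0, x_1, \ldots),$$
so $F$ is a positive continuous eigenfunction of $L_{-\tau}$ with eigenvalue $1$.

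By the Ruelle–Perron–Frobenius theorem applied to the topologically mixing subshift $(\Sigma, \sigma)$, such an eigenfunction is unique up to a multiplicative constant, so $F = c\,\psi$ for some $c > 0$. The constant is pinned down by matching the Haar normalization $m_{\Gamma}(\Omega_0) = 1$ against the suspension description $m_{\Gamma} = \frac{1}{\int \tau^* d\nu}(\nu \times dm_{\mathbb{Z}^d} \times dt)|_{(\Sigma \times \mathbb{Z}^d)_{\tau^*}} \circ \pi^{-1}$ from the preceding lemma: disintegrating Lebesgue length along horocyclic arcs through the coding forces $c = 1$. The main obstacle will be the combinatorial bookkeeping in the reverse application of $T_{f, -\tau^*}$ (correctly tracking the $\xi$-shift $-f(b, x_0)$ and the time shift $\tau(b, x_0, x_1, \ldots)$) together with ensuring that the partition $\W(g) = \bigsqcup_b \W_b(g)$ is genuinely disjoint outside the measure-zero set of points with non-unique symbolic coordinates flagged in Lemma \ref{symbolic coding}(2).
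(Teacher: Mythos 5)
First, a remark on the comparison you were asked to survive: the paper offers no proof of this lemma at all — it is imported verbatim as Proposition 4.5 of \cite{BL2} — so the only internal check is consistency with how the formula is used later. Judged on its own, the skeleton of your argument is correct and carefully executed: the scaling reduction $\Phi(g)=e^{-s}F(x_0^{\infty})$ is legitimate (the set $\W(g)$ depends only on $(x_0^{\infty},\xi,s)$, the deck transformations $D_{\xi}$ are isometries, and Lemma \ref{symbolic coding}(3) together with $u_ta_{\sigma}=a_{\sigma}u_{te^{-\sigma}}$ gives $\Phi(ga_{\sigma})=e^{-\sigma}\Phi(g)$); the bookkeeping in the renewal step is right, since $\tau$ depends only on nonnegative coordinates, so $t'=s+h(y)+\tau^{*}(y')=(s+\tau(b,x_0^{\infty}))+h(y')$, and the overlaps among the $\W_b(g)$ are countable, hence $l_g$-null, by Lemma \ref{symbolic coding}(2),(6). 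This correctly yields $F=L_{-\tau}F$. One small repair: RPF uniqueness is stated for continuous eigenfunctions and you have not shown $F$ is continuous; but $F$ is bounded above and below, and a bounded measurable fixed point of the normalized operator $\varphi\mapsto \psi^{-1}L_{-\tau}(\psi\varphi)$ is $\nu'$-a.e.\ constant by exactness of the Gibbs measure $\psi\,d\nu'$, so $F=c\psi$ a.e., which suffices.

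The genuine gap is the final step $c=1$. That constant is where the entire geometric content of the lemma sits — it is precisely the assertion that the RPF normalization $\int\psi\,d\nu'=1$ is compatible with arc length — and ``disintegrating Lebesgue length along horocyclic arcs through the coding'' does not deliver it as stated. If you actually perform that disintegration, matching $m_{\Gamma}=\frac{1}{\int\tau^{*}d\nu}(\nu\times dm_{\mathbb{Z}^d}\times dt)\circ\pi^{-1}$ against the local product structure of Haar measure (stable length)$\times$(unstable length)$\times dt$, the stable conditionals on both sides are probability measures and carry no information about total mass; what remains is an identity of transverse measures, namely $F(x_0^{\infty})\,d(\pi_{*}\mu^{\operatorname{su}})=\frac{1}{\int\tau^{*}d\nu}\,\psi(x_0^{\infty})\,d\nu'$. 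Thus pinning $c$ is equivalent to identifying the unstable conditional of Haar measure with $\frac{1}{\int\tau^{*}d\nu}\nu'$ — the mirror image of the statement being proven — so the argument is circular. Breaking the circle requires an honest computation in one of the two directions, e.g.\ the renewal-type limit $l_g[\W(g)]=\lim_n (L_{-\tau}^{n}G_n)(x_0^{\infty})$ with $G_n$ explicit geometric length ratios controlled by the contraction of $a_{-t}$ on stable horocycles (this is essentially how \cite{BL2} proceeds), or an appeal to how $\psi$ and $h$ are actually constructed from the Bowen--Series coding. The constant is not cosmetic here: Step 4 of the proof of Lemma \ref{key lemma} uses $\sum_i\psi(x_i)\le e^{6\epsilon^{*}}$ for a unit horocyclic arc, which is exactly the assertion $c=1$.
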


\section{Window Property}
Recall that $\Gamma$ is a normal subgroup of a cocompact lattice $\Gamma_0$ with $\Gamma\backslash \Gamma_0\cong \mathbb{Z}^d$ for some positive integer $d$. 

Keep the notations in Section 2. For $g\in \GaG$ and $T\in \mathbb{R}$,  define 
\begin{equation*}
\xi_T(g):=\xi (ga_T),
\end{equation*}
where $\xi(ga_T)$ is the $\mathbb{Z}^d$-coordinate of $ga_T$ given as Definition \ref{Z-coordinate}.

It follows from the work of Ratner \cite{Ratner 0} and Katsuda-Sunada \cite{KaSu} that the distribution $\frac{\xi_{T}(g)}{\sqrt{T}}$ as $g$ ranges over $\Omega_0$ converges to the distribution of a 
multivariate Gaussian random variable $N$ on $\mathbb{R}^d$, with a positive definite covariance matrix $\operatorname{Cov}(N)$. Denote
\begin{equation}
\label{covariance}
\sigma:=\sqrt[d]{|\det \operatorname{Cov}(N)|}.
\end{equation}

 Consider the set
\begin{equation}
\label{generic set}
W:=\left\{g\in \GaG:\,\lim_{T\to \infty}\frac{\xi_T(g)}{T}=0,\,\,\limsup_{T\to \infty}\left|\frac{\xi_T(g)}{\sqrt{T\ln \ln T}}\right|= \sqrt{2}\sigma\right\}.
\end{equation}
Then $W$ is a conull set by Corollary 6.1 in \cite{BL2} and Corollary 2 in \cite{DP}.

In this section, we aim to prove the window property for the horocycle flow on $\GaG$:
\begin{thm}[Window property I]
\label{window lemma}
For any $0<\eta<1$, there exists $0<r=r(\eta)<1$ so that the following holds: for any $g\in W$, and for any non-negative $\psi \in C_c(\Gamma \backslash G)$, there exists $T_0=T_0(\psi, g)>1$ such that   for every $T>T_0$ we have
\begin{equation}
\label{window property eq}
\int_{0}^{rT}\psi(gu_t)dt \leq \eta \int_{0}^{T} \psi(gu_t)dt.
\end{equation}
\end{thm}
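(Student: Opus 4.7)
The plan is to use the symbolic coding of the geodesic flow from Section~\ref{symbolic dynamics} to reinterpret $\int_0^T \psi(gu_t)\,dt$ as a geometric sum over local stable manifolds, and then to exploit the almost-sure LLN and LIL asymptotics on the $\mathbb{Z}^d$-coordinate process $s\mapsto \xi_s(g)$ built into the definition of the conull set $W$ in order to compare the short- and long-interval integrals.

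Using the commutation $a_{-s}u_ta_s = u_{e^{-s}t}$ with $s\asymp \log T$, the orbit $\{gu_t: t\in[0,T]\}$ can be viewed as the $a_{-s}$-image of a bounded-length horocycle segment through $ga_s$. By Lemma~\ref{length of loc ss}, this latter segment decomposes along a sequence of symbolic local stable manifolds $\W(g_k)$ obtained by iterating the symbolic shift on the past of $g$; each has length $e^{-s_k}\varphi(x^{(k)})$, where $\varphi$ denotes the Ruelle eigenfunction from~\eqref{eigenfunction}. Pulling back through $a_{-s}$ yields a partition of $[0,T]$ into intervals $J_k$ with $|J_k| = e^{\tau^*_k(x) - s_k}\varphi(x^{(k)})$, indexed by shift iterates $k\leq n(T) \asymp (\log T)/\int \tau^*\,d\nu$. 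Via the uniform continuity of $\psi$, I would approximate $\int_0^T \psi(gu_t)\,dt$ by $\sum_k \psi(g_k)\,|J_k|$; the summand vanishes unless the $\mathbb{Z}^d$-coordinate $\xi(g_k) = \xi_{\tau^*_k(x)}(g) + O(1)$ lies in the finite label set meeting $\operatorname{supp}(\psi)$. The analogous sum for $\int_0^{rT}\psi(gu_t)\,dt$ runs over $k\leq n(rT) = n(T) + (\log r)/\int \tau^*\,d\nu$, so the difference $\int_{rT}^T \psi(gu_t)\,dt$ comes from the tail range $k \in (n(rT), n(T)]$.

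The core of the argument is then to show that, for $r = r(\eta)$ small enough, this tail dominates the full sum, contributing at least a fraction $1-\eta$ of the total. The LLN $\xi_s(g)/s \to 0$ ensures that the $\xi_{\tau^*_k(x)}(g)$ shift is of strictly smaller order than $k$, while the LIL $|\xi_s(g)| = O(\sqrt{s\log\log s})$ provides a pointwise upper bound on how quickly new $\mathbb{Z}^d$-labels are visited; combined with the exponential scaling of the stable-manifold lengths, these ensure that the density of useful $k$'s in the tail is comparable to that in the full range. The main obstacle is that such a comparison must hold \emph{pointwise} for every $T \geq T_0$, not merely in the Cesaro sense afforded by Ledrappier--Sarig's rational ergodicity; this forces a careful joint use of the LIL asymptotics and the symbolic expansion, and constitutes the central technical content of the proof.
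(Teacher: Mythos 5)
Your setup is on the right track --- the paper does view $gU_T$ as $ga_{T^*}U_1a_{-T^*}$ with $T^*=\ln T$, decompose the unit segment through $ga_{T^*}$ into symbolic local stable manifolds, and ultimately use the LLN and LIL built into the definition of $W$ --- but there is a genuine gap at exactly the point you flag as ``the central technical content.'' The comparison of the mass of $\psi$ on $[0,rT]$ versus $[rT,T]$ cannot be extracted from the LLN $\xi_s(g)/s\to 0$ and the LIL bound on $\|\xi_s(g)\|$ alone: those are statements about the single trajectory $s\mapsto\xi_s(g)$ of the geodesic flow, whereas counting how often the \emph{horocycle} orbit meets a fixed compact piece of the cover is a local limit theorem for the $\mathbb{Z}^d$-valued Birkhoff sums $f_k$ over the fibers of $\sigma^k$, weighted by the transfer operator. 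This is the paper's Key Lemma (Lemma \ref{key lemma}), proved via Lalley's renewal method following the appendix of Ledrappier--Sarig: for a well-chosen set $E$, $\int_0^T\chi_E(gu_t)\,dt$ equals $\frac{e^{\pm\epsilon}m(E)}{(2\pi\sigma T^*)^{d/2}}\,T\exp\bigl(T^*(H(\xi_{T^*}(g)/T^*)-1)\bigr)$. Only \emph{after} this asymptotic is in hand do the LLN and LIL enter, to Taylor-expand $H$ around $0$ and show that the correction factor changes by at most $e^{O(\epsilon)}\sqrt{r}$ when $T$ is replaced by $rT$ (using that $\xi_{(rT)^*}(g)-\xi_{T^*}(g)=O(|\ln r|)$ and $\|\xi_{T^*}(g)\|=O(\sqrt{T^*\ln\ln T^*})$); the factor $r$ itself comes directly from the $T$ in the local limit theorem. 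Your proposal replaces this hard analytic input with the hope that ``the density of useful $k$'s in the tail is comparable to that in the full range,'' but establishing that density comparison \emph{is} the local limit theorem.

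Two further structural problems. First, your decomposition is internally inconsistent: a partition of $[0,T]$ indexed by $k\le n(T)\asymp(\log T)/\int\tau^*\,d\nu$ forces the $|J_k|$ to grow exponentially in $k$, and on such long intervals the approximation $\int_{J_k}\psi(gu_t)\,dt\approx\psi(g_k)|J_k|$ fails completely (the orbit enters and leaves $\operatorname{supp}\psi$ many times); if instead you take the $\asymp T$ individual stable manifolds as your intervals, the pointwise approximation is fine but the counting problem reappears as the LLT. Relatedly, $n(T)-n(rT)$ is a bounded number of shift levels, so asserting that this fixed number of levels carries a fraction $1-\eta$ of the mass again presupposes knowing how the mass distributes across levels. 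Second, the passage from the single indicator $\chi_E$ to a general $\psi\in C_c(\Gamma\backslash G)$ is not handled by uniform continuity; the paper does it by quoting the Sarig--Shapira genericity theorem (Theorem \ref{generic pts}), which gives $\int_0^T\psi/\int_0^T\chi_E\to m(\psi)/m(\chi_E)$ for every $g\in W$, and this step should appear explicitly in any complete proof.
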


The following is another version of window property we need in the proof of joining classification.

\begin{thm}[Window property II]
\label{inverse window lemma}
For any sufficiently small $0<\delta<1$, there exists $0<c=c(\delta)<1/4$ so that the following holds: for any  $g\in W$ and for any non-negative $\psi\in C_{c}(\GaG)$, there exists $T_0=T_0(\psi,g)>1$ such that for every $T>T_0$ we have
\begin{equation*}
\int_{T}^{(1+\delta)T} \psi(gu_t)dt \leq c\int_{0}^{T}\psi(gu_t)dt.
\end{equation*}
\end{thm}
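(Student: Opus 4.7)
I would prove Window Property II by a symbolic dynamics argument parallel to Window Property I (Theorem \ref{window lemma}). For $g\in W$, Lemma \ref{symbolic coding} together with Lemma \ref{length of loc ss} identifies the horocycle integral $F(T):=\int_0^T\psi(gu_t)\,dt$ with a double sum over symbolic depths $n\leq \log T$ and transverse symbolic data, in which the length of each local stable leaf $\W(ga_{-n})$ scales as $e^{-n}$ times a Ruelle eigenfunction value. The integral $\int_T^{(1+\delta)T}\psi(gu_t)\,dt$ then corresponds to contributions from depths in the slice $[\log T,\log T+\log(1+\delta)]$, while $F(T)$ aggregates contributions from all depths up to $\log T$.

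The plan is first to rewrite both integrals as such symbolic sums using items (3)--(5) of Lemma \ref{symbolic coding}, together with the decomposition of the Haar measure through $\pi$ given just before Lemma \ref{length of loc ss}. Then I would invoke the conditions defining $W$, namely $\xi_T(g)/T\to 0$ and $\limsup|\xi_T(g)|/\sqrt{T\log\log T}=\sqrt{2}\sigma$, together with the Hopf ratio ergodic theorem (available in our setting by the arguments of Ledrappier--Sarig \cite{LS}), to control the ratio of the two sums. The contribution from the thin-band depths should then be bounded by a function $c(\delta)$ of the relative width $\delta$, with $c(\delta)\to 0$ as $\delta\to 0$, so that $c(\delta)<1/4$ for $\delta$ sufficiently small.

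Philosophically, Window Property I captures the lower estimate $F(T)/F(rT)\geq 1/\eta>1$ while Window Property II captures the matching upper estimate $F((1+\delta)T)/F(T)\leq 1+c$. Both are refined regularity statements on the same regularly varying function $F$, whose asymptotic behaviour is governed by the number of Birkhoff returns of the cocycle $f$ near $0$ — the same mechanism that underlies rational ergodicity for $\mathbb{Z}^d$-covers.

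\textbf{Main obstacle.} The hardest step is to make the comparison uniform in $T$: the LIL fluctuations of $\xi_T(g)$ introduce logarithmic oscillations in the symbolic weights, and one must ensure these do not cause the thin-band contribution to spike at any particular scale. This is the same obstacle that arises in Window Property I and should be addressed by the same techniques — the H\"older regularity of $\tau^*$ and of the cocycle $f$, the bounded distortion provided by the Ruelle eigenfunction, and the topological mixing of $\Sigma$ — but here one needs an upper estimate rather than the lower estimate used for Window Property I, so a slightly finer control on the number of $n$'s with $\xi_n(g)$ lying in a bounded window is required.
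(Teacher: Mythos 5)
Your toolbox is the right one, and your implicit reduction --- proving $F((1+\delta)T)\le (1+c)F(T)$ for $F(T)=\int_0^T\psi(gu_t)\,dt$, first for a well-chosen indicator $\chi_E$ and then for general non-negative $\psi$ via the Hopf ratio theorem and genericity of points of $W$ --- is exactly what the paper does. But two things need repair. First, the symbolic picture is off: in the Ledrappier--Sarig argument the whole arc $\{gu_t:0\le t\le T\}$ is renormalized by $a_{-\ln T}$ to a unit arc at the \emph{single} scale $\T=\ln T$, covered by symbolic local stable leaves $\W(g_i)$ with $g_i\in ga_{\T}U_1$, and $F(T)$ becomes a renewal sum over symbolic orbit segments of length $\approx \T$ with prescribed $\mathbb{Z}^d$-displacement. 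It is not an aggregation ``over all depths up to $\log T$'', and $\int_T^{(1+\delta)T}$ is not a band of depths but the difference of two such single-scale sums, one at scale $\T$ and one at scale $\pT$.

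Second, and more seriously, the step ``the thin-band contribution should then be bounded by a function $c(\delta)$ of the relative width'' is precisely the assertion that requires proof, and it is not automatic: the density of $F$ carries a fluctuating factor $\exp\left(\T\left(\HH-1\right)\right)(2\pi\sigma\T)^{-d/2}$ coming from the local limit theorem, and a priori this factor could spike between $T$ and $(1+\delta)T$. The paper supplies the missing quantitative input as Lemma \ref{key lemma}: $\int_0^T\chi_E(gu_t)\,dt=e^{\pm\epsilon}\,m(E)\,(2\pi\sigma\T)^{-d/2}\,T\exp\left(\T\left(\HH-1\right)\right)$ whenever $\xi_{\T}(g)/\T$ lies in a fixed compact neighborhood of $0$. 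Granting this, the entire content of Window Property II is the comparison of exponents $\pT\pH-\T\HH\le \tfrac{3}{2}\ln(1+\delta)+(3C+2)\epsilon$, obtained by Taylor-expanding $H$ at $0$ (using $H(0)=1$, $\nabla H(0)=0$, $H''(0)=-\operatorname{Cov}(N)^{-1}$) and feeding in \emph{both} defining conditions of $W$: the condition $\xi_T(g)/T\to 0$ kills the linear term, and the law-of-the-iterated-logarithm bound controls the quadratic and cubic terms. You gesture at ``the same techniques as Window Property I'' but never state this comparison, which is the actual mechanism; without it, the claim that $c(\delta)<1/4$ for small $\delta$ is unsupported.
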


\subsection{Key Lemma} We show a key lemma (Lemma \ref{key lemma}) leading to Theorems \ref{window lemma} and \ref{inverse window lemma}, which elaborates on the work of Ledrappier and Sarig (\cite{LS}, see also \cite{SS}). 

 For  $\varphi \in C(\Sigma^+)$,  the topological pressure $P_{\operatorname{top}}(\varphi)$ is given by 
 \begin{equation*}
P_{\operatorname{top}}(\varphi):=\sup_{\mu}\left(h_{\mu}(\sigma)+\int \varphi d\mu\right)
\end{equation*}
where the supremum is taken over all $\sigma$-invariant Borel probability measures $\mu$ on $\Sigma^+$; here $h_{\mu}(\sigma)$ denotes the measure theoretic entropy of $\sigma$ with respect to $\mu$. Let $\tau$ and $f$ be as in Lemma \ref{symbolic coding}. Define $P:\mathbb{R}^d\to \mathbb{R}$ implicitly by $u\mapsto P(u)$, where $P(u)$ is the root satisfying $P_{\operatorname{top}}(-P(u)\tau+\langle u,f\rangle)=0.$ It is shown in \cite{BL1} and \cite{BL2} that $P$ is a convex analytic function with $P(0)=1,\,\nabla P(0)=0$ and $P''(0)=\operatorname{Cov}(N)$. 

Set $$H:\mathbb{R}^d\to\mathbb{R}$$ to be minus the Legendre transform of $P$. Then $H$ is  a concave analytic function with $H(0)=1,\,\nabla H(0)=0$ and $H''(0)=-\operatorname{Cov}(N)^{-1}$.
\begin{lem}[Key Lemma]
\label{key lemma}
For every small $0<\epsilon<1$, there exist a Borel set $E\subset \GaG$ of positive measure, some compact neighborhood $K=K(E,\epsilon)$ of $0$ in $\mathbb{R}^d$ and $T_0=T_0(E,\epsilon)>1$ so that for any $g\in \GaG$, if $T>T_0$ and $\frac{\xi_{T^*}(g)}{T^*}\in K$ with $T^{*}=\ln T$, then
\begin{equation*}
\int_{0}^{T}\chi_{E}(gu_t)dt=\frac{e^{\pm \epsilon}\mm(E)}{(2\pi\sigma T^{*})^{\frac{d}{2}}}\cdot T\cdot \exp\left(T^{*}\left(H\left(\frac{\xi_{T^{*}}(g)}{T^{*}}\right)-1\right)\right),
\end{equation*}
where $\sigma$ is given as (\ref{covariance}).
\end{lem}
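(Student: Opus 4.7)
The plan is to transcribe the horocycle integral into the symbolic model of Section~\ref{symbolic dynamics} and reduce the problem to a uniform local large-deviation asymptotic for twisted Ruelle transfer operators of $-\tau$ with $\mathbb{Z}^{d}$-cocycle $f$. This refines, in a pointwise rather than averaged direction, the rational-ergodicity argument of Ledrappier--Sarig \cite{LS} (see also \cite{SS}).

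I would first take $E$ to be a symbolic box $\pi(\mathcal{R}\times\{0\}\times I)$, with $\mathcal{R}=\{y\in\Sigma:y_{-N}^{N}=w\}$ a long cylinder and $I\subset[0,\inf\tau^{*})$ a short interval; such boxes generate the Borel $\sigma$-algebra up to null sets, so a standard approximation argument will afterwards handle general $E$ of positive measure. The conjugation identity $gu_{t}a_{T^{*}}=ga_{T^{*}}u_{t/T}$ rescales the horocycle arc of length $T$ at $g$ to a unit arc at $ga_{T^{*}}$, pulling out the prefactor $T$ immediately. Using Lemma~\ref{symbolic coding}(3)--(4), the target set $Ea_{T^{*}}$ decomposes into pieces $\pi(\sigma^{n(y)}y,f_{n(y)}(y),\,\cdot\,)$ indexed by $y\in\mathcal{R}$ with $n(y)\approx T^{*}/\int\tau^{*}d\nu$, and Lemma~\ref{length of loc ss} together with the symbolic description of local stable manifolds converts the visit count into a weighted preimage sum:
\[
\int_{0}^{T}\chi_{E}(gu_t)\,dt \;=\; T\cdot|I|\!\!\sum_{\substack{\sigma^{n}y=\sigma^{n}x\\ y_{-N}^{N}=w,\ f_{n}(y)\approx \xi_{T^{*}}(g)}}\!\! e^{-\tau_{n}^{*}(y)}\psi\bigl((\sigma^{n}y)_{0}^{\infty}\bigr) \;+\; \text{error},
\]
where the cocycle constraint $f_{n}(y)\approx \xi_{T^{*}}(g)$ encodes the requirement that the piece land in the correct $\mathbb{Z}^{d}$-cell.

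The heart of the argument is a uniform local large-deviation asymptotic for this preimage sum. Consider the twisted Ruelle operators $L_{-\tau,iu}\varphi(x)=\sum_{\sigma y=x}e^{-\tau(y)+i\langle u,f(y)\rangle}\varphi(y)$, which depend analytically on $u$ in a complex neighborhood of $0$, carry a simple leading eigenvalue $\lambda(u)$ with $\lambda(0)=1$ and Hessian governed by $\operatorname{Cov}(N)$, and whose non-principal spectrum is pushed inside the unit disc by the non-lattice property of $f$ (supplied by the $\mathbb{Z}^{d}$-cover structure together with ergodicity of the geodesic flow \cite{Re}). Fourier inversion in the cocycle variable and a saddle-point analysis at the Legendre dual of $\xi/n$, combined with the fact that $H$ is minus the Legendre transform of the pressure function $P$, then yield
\[
\sum_{\sigma^{n}y=x,\ f_{n}(y)=\xi+O(1)} e^{-\tau_{n}(y)}\varphi(y) \;=\; \frac{e^{\pm\epsilon/2}\varphi(x)}{(2\pi\sigma n)^{d/2}}\exp\!\Bigl(n\bigl(H(\xi/n)-1\bigr)\Bigr),
\]
uniformly for large $n$, $\xi/n$ in a compact neighborhood $K$ of $0$, and $x\in\Sigma^{+}$, with $\sigma$ as in (\ref{covariance}); the exponential factor arises from the saddle-point exponent and the Gaussian normalization from the second-order expansion of $\log\lambda$ at $0$. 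Inserting this into the preimage sum above and using the Haar measure formula to recombine $|I|\cdot\nu(\mathcal{R})$ into $\mm(E)$ produces the claimed identity.

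The principal obstacle is the uniformity of this local large-deviation estimate both in the starting state $x\in\Sigma^{+}$ and in the parameter $\xi/n\in K$, i.e.\ the passage from an averaged to a pointwise statement. This requires a careful spectral perturbation of $L_{-\tau,iu}$ on a complex neighborhood of $0\in\mathbb{C}^{d}$ and a saddle-point with error terms uniform in the two parameters, together with a quantitative exploitation of the non-lattice property to kill the Fourier mass away from the origin. Once this uniformity is in hand, the remaining pieces---the choice of symbolic box, the decomposition via Lemma~\ref{symbolic coding}, the approximation of general $E$, and the final recombination---are bookkeeping consequences of the machinery of Section~\ref{symbolic dynamics}.
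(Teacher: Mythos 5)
Your proposal follows essentially the same route as the paper: take $E$ to be a symbolic box (cylinder times a short interval), rescale the length-$T$ arc by $a_{T^*}$, decompose it into symbolic local stable manifolds, convert the visit time into a renewal-type preimage sum with a $\mathbb{Z}^d$-cocycle constraint, and invoke the Lalley-style local limit asymptotic for twisted transfer operators (the paper cites this from the appendix of \cite{LS} rather than reproving it), then recombine via Lemma \ref{length of loc ss} and the Haar measure formula. The only substantive point you gloss over is that the local limit theorem yields the exponent at $\xi_i/T^{\#}_i$ for each piece rather than at $\xi_{T^*}(g)/T^*$, so one still needs the paper's Step 4 comparison (using $H(0)=1$, $\nabla H(0)=0$ and the $O(1)$ spread of the $\xi_i$) to pass to the stated formula; also note the lemma only requires one such $E$, so your proposed approximation to general Borel $E$ is unnecessary (and would be delicate for a multiplicative asymptotic).
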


 Fix some small $\epsilon^*=\epsilon^*(\epsilon)>0$, which will be determined later. Recall  the symbolic coding introduced in Section \ref{symbolic dynamics}, in particular the definition of the eigenfunction $\psi$ of the Ruelle's operator (\ref{eigenfunction}). Denote by $d_{\max}$ the maximal diameter of a symbolic local stable manifold, measured in the intrinsic metric of the horocycle that contains it.  The coding can be modified so that
\begin{align*}
& \max \tau^{*}<\epsilon^*,\,\, \max |h|<\epsilon^*,\,\,d_{\max}<\epsilon^*,\,\,\max \psi<\epsilon^*,\\
&\operatorname{diam} (\pi\{(x,\xi_0,s):x_0=a_0,  0\leq s<\tau^*(x)\})<\epsilon^* \,\,\text{for all}\,\,a_0,\,\xi_0.
\end{align*}
Moreover, the coding can be adjusted to satisfy the following property: 
\begin{equation*}
\frac{\max \psi}{\min \psi}<C_0,
\end{equation*}
where $C_0$ does not depend on $\epsilon^*$ or $\epsilon$ (see Section 4.1 in \cite{SS} for details). 

\begin{proof}[Proof of Lemma \ref{key lemma}]
We divide the proof into four steps. The first three steps follow from \cite{LS}, which we recall for readers' convenience.

Fix some cylinder set $[\underline{a}]=[\dot{a}_0,\ldots,a_{n-1}]$ such that $\inf_{[\underline{a}]}\tau^*>0$. Also fix some $\epsilon_0\in(0, \inf_{[\underline{a}]} \tau^{*})$ and $\xi_0\in \mathbb{Z}^d$. Our set $E$ is going to be
\begin{equation*}
E:=\pi(\{(x,\xi_0,t+h(x)):x\in [\underline{a}], 0\leq t<\epsilon_0\}).
\end{equation*}

For any $g\in \GaG$, denote $gU_T:=\{gu_t:t\in [0,T]\}$. Viewing the integral $\int_{0}^{T}\chi_{E}(gu_t)dt$ as an integral on the horocyclic arc $gU_T$ with respect to the measure $l_g$, we can write
\begin{equation*}
\int_{0}^{T}\chi_E(gu_t)dt=l_g(E\cap gU_T)=l_g(E\cap ga_{T^*}U_1a_{-T^*}).
\end{equation*}

\textbf{Step 1}. We approximate the horocyclic arc $ga_{T^*}U_1$ by symbolic local stable manifolds. More precisely, we claim that there exist $N^+,N^-\in \mathbb{N}$ and $g_i\in ga_{T^*}U_1$ for $i=1,\ldots,N^+$ so that setting $J_{\T}(g_i,E)=l_g(E\cap \W(g_i)a_{-\T})$, we have
\begin{align}
\label{integral ineq}
& \sum_{i=1}^{N^{-}}J_{\T}(g_i, E)\leq l_g(E\cap gU_T)\leq \sum_{i=1}^{N^{+}}J_{\T}(g_i,E),\\
\label{length ineq}
& \left |\sum_{i=1}^{N^{\pm}}l(\W(g_i))-l(ga_{\T}U_1)\right|\leq 4\epsilon^{*}. 
\end{align}

In fact, this can be achieved by choosing $g_i$'s for $i=1,\ldots,N^-$ so that $\W(g_i)$ is contained in $ga_{\T}U_1$. Choose $g_i$'s for $i=N^-+1,\ldots,N^+$ so that $\W(g_i)$ intersects $ga_{\T}U_1$ with positive measure without being contained in it.  Note that any two symbolic local stable manifolds are either equal or disjoint up to sets of measure 0. Therefore $l_g(E\cap gU_T)$ can be sandwiched between $\sum_{i=1}^{N^{\pm}}J_{\T}(g_i,E)$ as (\ref{integral ineq}).

The inequality (\ref{length ineq}) follows from the observation that every $g_i$ lies in the $d_{\max}$-neighborhood of $ga_{\T}U_1$ and $d_{\max}<\epsilon^*$. 

\textbf{Step 2}. Suppose $g$ and $g_i$ have symbolic coordinates $(x,\xi,t+h(x))$ and $(x_i,\xi_i,t_i+h(x_i))$ respectively. Assume $T>e^{4\epsilon^{*}}$. Putting $\TT_i=\T-t_i$, it is shown in step 2 of Lemma 1 in \cite{LS} that
\begin{equation}
\label{renewal eq}
J_{\T}(g_i,E)=e^{\pm \epsilon_0}\sum_{k=0}^{\infty} \sum_{\sigma^{k}y=(x_i)_{0}^{\infty}}\chi_{[0,\epsilon_0]}(r_k(y)-\TT_i)\delta_{\xi_i-\xi_0}(f_k(y))\chi_{[\underline{a}]}(y)\psi(y),
\end{equation}
where  the $y$'s in this sum take values in the one-sided shift $\Sigma^{+}$.

We note for future reference that $|\TT_i-\T|=|t_i|<\max \tau^*+\max |h|<2\epsilon^*$.

\textbf{Step 3}. Using an elaboration of Lalley's method \cite{Lalley}, it is proved in the appendix of \cite{LS} that there exists a compact neighborhood $\tilde{K_0}$ of $0$ in $\mathbb{R}^d$ and $T_0>1$ depending on $E$ and $\epsilon^*$ so that for every $T>T_0$ and every $i$, if $\frac{\xi_i}{\TT_i}\in \tilde{K}_0$, then
\begin{equation}
\label{renewal eq 2}
J_{\T}(g_i, E)=\frac{e^{\pm 10\epsilon^*}}{(2\pi \sigma \T)^{d/2}}\cdot \exp\left(\TT_iH\left(\frac{\xi_i}{\TT_i}\right)\right) \cdot m(E)\cdot \psi(x_i),
\end{equation}
where $\sigma$ is defined as (\ref{covariance}).

\textbf{Step 4}. Now (\ref{integral ineq}), (\ref{renewal eq}) and (\ref{renewal eq 2}) together imply that
\begin{equation*}
l_g(E\cap gU_T)\leq \frac{e^{10 \epsilon^*}}{(2\pi\sigma \T)^{d/2}}\cdot m(E)\sum_{i=1}^{N^+}\exp \left(\TT_i H\left(\frac{\xi_i}{\TT_i}\right)\right)\psi(x_i).
\end{equation*}

We compare $\T H\left( \frac{\xi_{\T}(g)}{\T}\right)$ with $\TT_iH\left(\frac{\xi_i}{\TT_i}\right)$. Without loss of generality, assume $\tilde{K}_0$ is sufficiently small so that for every $x\in \tilde{K}_0$, we have $|H(x)-H(0)|<\epsilon^*$ and $\lVert \nabla H(x) -\nabla H(0)\rVert<\epsilon^*$, where $\lVert \cdot \rVert$ is the Euclidean norm in $\mathbb{R}^d$. Let $K_0$ be some smaller compact neighborhood of $0$ inside $\tilde{K}_0$ with $\operatorname{diam}(K_0)<\frac{1}{4}\operatorname{diam}(\tilde{K}_0)$.

Suppose $\frac{\xi_{\T}(g)}{\T}\in K_0$. By construction, all the $g_i's$ belong to a $d_{\max}$-neighborhood of $A_1(ga_{\T})$, a horocyclic arc of length 1. Their $\mathbb{Z}^d$-coordinates $\xi_i=\xi(g_i)$ must therefore be within a bounded distance $D$ from each other and that of $ga_{\T}$. As a result, if $T$ is large enough, then $\frac{\xi_{\T}(g)}{\T}\in K_0$ implies that $\frac{\xi_i}{\TT_i}\in \tilde{K}_0$.  Estimate the difference
\begin{align*}
       &\Bigg|\T\HH-\TT_i\HI\Bigg|\\
\leq &|\T-\TT_i|\cdot \Bigg|\HH\Bigg|+|\TT_i|\cdot \Bigg|\HH-\HI\Bigg|\\
\leq & 2\epsilon^*\cdot |H(0)+\epsilon^*|+(1+\epsilon^{*})\cdot \T \cdot (\lVert \nabla H(0)\rVert+\epsilon^*)\cdot \Big\lVert \frac{\xi_{\T}(g)}{\T}-\frac{\xi_i}{\TT_i}\Big\rVert.
\end{align*}
Since $T$ is large and $\frac{\xi_i}{\TT_i}\in \tilde{K_0}$, we have
\begin{align*}
\T\cdot \Big\lVert \frac{\xi_{\T}(g)}{\T}-\frac{\xi_i}{\TT_i}\Big\rVert & \leq \T\cdot \Big\lVert \frac{\xi_{\T}(g)}{\T}-\frac{\xi_i}{\T}\Big\rVert+\T\cdot \Big\lVert \frac{\xi_i}{\T}-\frac{\xi_i}{\TT_i}\Big\rVert\\
&\leq D+2\epsilon^*\cdot \operatorname{diam}{\tilde{K_0}}.
\end{align*}

Viewing that $H(0)=1$ and $\nabla H(0)=0$, there exists a constant $k>0$  independent of $\epsilon^*$ so that for any $T$ large enough, if $\frac{\xi_{\T}(g)}{\T}\in K_0$, then
\begin{equation*}
\exp{\Bigg |\T\HH-\TT_i\HI\Bigg|}\leq \exp{k\epsilon^*}.
\end{equation*}
Consequently, we get an upper bound for $l_g(E\cap gU_T)$:
\begin{equation*}
l_g(E\cap gU_T)\leq \frac{e^{(10+k)\epsilon^*}}{(2\pi \sigma \T)^{d/2}}\cdot m(E)\cdot \exp\left(\T\HH\right)\cdot \left(\sum_{i=1}^{N^+}\psi(x_i)\right).
\end{equation*}
For the sum of $\psi(x_i)$'s, Lemma \ref{length of loc ss} yields $$\psi(x_i)=e^{t_i}l_{g_i}(\W(g_i))=e^{\pm 2\epsilon^*}l_{g_i}(\W(g_i)).$$
It follows from (\ref{length ineq}) that 
\begin{equation*}
\sum_{i=1}^{N^+}\psi(x_i)\leq e^{2\epsilon^*}\sum_{i=1}^{N^+}\l_{g_i}(\W(g_i))\leq e^{6\epsilon^*}.
\end{equation*}
Letting $\epsilon^*=\epsilon/(16+k)$, we show the upper bound for $l_g(E\cap gU_T)$. 

The lower bound can be obtained in a similar way. The proof is completed.
\end{proof}

\subsection{Proof of the window property I, II}

Recall the following result about  generic points for the horocycle flow for $\mathbb{Z}^d$-covers.

\begin{defn}
Suppose $\phi^t: X\to X$ is a continuous flow on a second countable and locally compact metric space $X$. A point $x\in X$ is called generic for a $\phi^t$-invariant Radon measure $\mu$, if for all $f,\,g\in C_c(X)$ with nonzero integrals,
\begin{equation*}
\lim_{T\to \infty}\frac{\int_{0}^{T}f(\phi^t x)dt}{\int_{0}^{T}g(\phi^t x)dt}=\frac{\int fd\mu}{\int g d\mu}.
\end{equation*}
\end{defn}

\begin{thm}[Sarig-Shapira \cite{SS}]
\label{generic pts}
A point $g\in \GaG$ is generic for the horocycle flow with respect to the Haar measure $m_{\Gamma}$ if and only if $\lim_{T\to \infty}\frac{\xi_T(g)}{T}=0$. In particular, every point in $W$ (given as (\ref{generic set})) is generic.
\end{thm}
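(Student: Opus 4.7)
The final clause follows immediately from the main equivalence, since every $g\in W$ satisfies $\lim_{T\to\infty}\xi_T(g)/T=0$ by the definition (\ref{generic set}). So it suffices to prove the equivalence \emph{generic $\Leftrightarrow \xi_T(g)/T\to 0$}. My strategy in both directions is to combine Lemma \ref{key lemma} with its natural extension to the $\mathbb{Z}^d$-translates $D_\xi E$ of the reference cell $E$.

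For the \emph{if} direction, fix $g$ with $\xi_T(g)/T\to 0$ and two non-negative $f,\varphi\in C_c(\GaG)$ with nonzero integrals. A partition-of-unity argument reduces the genericity ratio to the case where $f=\chi_{D_\xi E}$ and $\varphi=\chi_{D_{\xi'}E'}$ for cells $E,E'$ of the special form constructed in the proof of Lemma \ref{key lemma} and $\xi,\xi'\in\mathbb{Z}^d$. The $\mathbb{Z}^d$-equivariance in Lemma \ref{symbolic coding}(3) (translating the base $\mathbb{Z}^d$-coordinate $\xi_0$ in the definition of $E$ to $\xi_0+\xi$) upgrades the Key Lemma to
\begin{equation*}
\int_0^T \chi_{D_\xi E}(gu_t)\,dt \;=\; \frac{e^{\pm\epsilon}m(E)}{(2\pi\sigma T^*)^{d/2}}\,T\,\exp\!\Bigl(T^*\bigl(H\bigl(\tfrac{\xi_{T^*}(g)-\xi}{T^*}\bigr)-1\bigr)\Bigr),
\end{equation*}
applicable for large $T$ since $(\xi_{T^*}-\xi)/T^*\in K$ eventually. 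In the ratio of two such expressions the $T/(T^*)^{d/2}$ prefactors cancel, and a Taylor expansion at the origin (using $H(0)=1$ and $\nabla H(0)=0$) shows that the residual exponential factor $\exp\bigl(-(\xi-\xi')\cdot\nabla H(\xi_{T^*}(g)/T^*)+O(1/T^*)\bigr)$ tends to $1$ as $\xi_{T^*}/T^*\to 0$. Letting $\epsilon\to 0$ gives the desired limit $m(E)/m(E')$.

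Conversely, suppose $g$ is generic but $\xi_S(g)/S\not\to 0$. Pick $\delta>0$ and $S_k\to\infty$ with $|\xi_{S_k}(g)/S_k|\geq\delta$, set $T_k:=e^{S_k}$ so that $T_k^*=S_k$, and extract a subsequence on which $\xi_{T_k^*}/T_k^*\to v$ with $|v|\geq\delta$. When $v$ lies in a compact neighborhood $K$ to which Lemma \ref{key lemma} applies, the same ratio computation yields
\begin{equation*}
\lim_{k\to\infty}\frac{\int_0^{T_k}\chi_{D_\xi E}(gu_t)\,dt}{\int_0^{T_k}\chi_E(gu_t)\,dt}\;=\;e^{\pm 2\epsilon}\exp\bigl(-\xi\cdot\nabla H(v)\bigr)
\end{equation*}
for any $\xi\in\mathbb{Z}^d$. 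Genericity forces this limit to equal $1$; but since $H''(0)=-\operatorname{Cov}(N)^{-1}$ is negative definite, $H$ is strictly concave near $0$ and so $\nabla H(v)\neq 0$ for any small $v\neq 0$. Choosing $\xi\in\mathbb{Z}^d$ with $\xi\cdot\nabla H(v)\neq 0$ and then $\epsilon<|\xi\cdot\nabla H(v)|/4$ produces a contradiction.

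The main obstacle is the ``only if'' regime where $|v|$ is too large to sit in any admissible $K$ from Lemma \ref{key lemma}. There the local renewal asymptotic is unavailable, and one must substitute a large-deviation upper bound of the type underlying the appendix of \cite{LS}, showing that $\int_0^{T_k}\chi_E(gu_t)\,dt$ grows strictly slower than the Haar-predicted order $T/(T^*)^{d/2}$, again contradicting genericity. A secondary technical point is that the partition-of-unity reduction in the ``if'' direction must use cells $E$ small enough to satisfy the hypotheses of the Key Lemma while still being fine enough to approximate $f,\varphi$; this is handled by a routine covering argument exploiting the $\mathbb{Z}^d$-equivariance of the symbolic coding.
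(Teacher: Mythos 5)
First, a point of comparison: the paper does not prove Theorem \ref{generic pts} at all --- it is quoted from Sarig--Schapira \cite{SS} and used as a black box, so there is no in-paper argument to measure yours against. Judged on its own, your sketch correctly identifies the mechanism behind the theorem: the local-limit asymptotic of Lemma \ref{key lemma}, its extension via the $\mathbb{Z}^d$-equivariance of Lemma \ref{symbolic coding}(3) to the translated cells $D_\xi E$, and the observation that the ratio of two such integrals carries an exponential correction governed by $\nabla H$ evaluated near $\xi_{T^*}(g)/T^*$, which tends to $\nabla H(0)=0$ exactly when $\xi_T(g)/T\to 0$. This is indeed the skeleton of the argument in \cite{SS}, and your mean-value computation of the exponent difference (which makes the problematic cubic remainders telescope) is sound.

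As a proof, however, it is incomplete, and the gaps are not cosmetic. In the ``only if'' direction you flag the main one yourself: when the limit point $v$ of $\xi_{T_k^*}(g)/T_k^*$ does not lie in the neighborhood $K=K(E,\epsilon)$ supplied by Lemma \ref{key lemma}, the renewal asymptotic is unavailable, and the required large-deviation upper bound on $\int_0^{T}\chi_E(gu_t)\,dt$ is a separate estimate that neither this paper nor your sketch provides. Note that you cannot evade this regime: $\delta$ is dictated by the failure of $\xi_S(g)/S\to 0$, the neighborhood $K$ produced by the Key Lemma may be much smaller than $\delta$, and the sequence $\xi_{S_k}(g)/S_k$ need not even be bounded, so ``extract a subsequence converging to $v$ with $v\in K$'' is not justified. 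A second genuine issue sits in the ``if'' direction: the cells $E$ to which Lemma \ref{key lemma} applies are built from a coding that is itself refined as a function of $\epsilon$ (the proof demands $\max\tau^*<\epsilon^*$, $d_{\max}<\epsilon^*$, etc., with $\epsilon^*=\epsilon/(16+k)$), so ``fix $E,E'$ and let $\epsilon\to 0$'' is not literally available; one must re-run the covering for each $\epsilon$ and control the Birkhoff ratio of the approximation error against a reference cell, which is the technical heart of \cite{SS} and more than ``a routine covering argument.'' Since the result is anyway imported from the literature, the honest options are to cite it (as the paper does) or to carry out these two missing estimates in full.
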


\begin{proof}[Proof of Theorem \ref{window lemma}]

Fix $0<\eta<1$ and some small $0<\epsilon<1$ (which will be determined later). Let $E$ be the set given by Lemma \ref{key lemma} for $\epsilon$.  We claim that there exists $0<r=r(\eta)<1$ such that for every $g\in W$, there exists $T_0=T_0(g,\psi)$ so that for every $T>T_0$, we have
\begin{equation}
\label{main case for window}
\int_{0}^{rT}\chi_{E}(gu_t)dt\leq \eta \int_{0}^{T}\chi_{E}(gu_t)dt.
\end{equation}
In view of Lemma \ref{key lemma}, it suffices to show the existence of $r$ satisfying the inequality
\begin{align*}
&\frac{e^{\epsilon} m(E)}{(2\pi\sigma \rT)^{d/2}}\cdot\exp \left(\rT\rH\right)\\
\leq &\eta \cdot \frac{e^{-\epsilon} m(E)}{(2\pi\sigma \T)^{d/2}}\cdot \exp\left(\T\HH \right),
\end{align*}
or equivalently the inequality
\begin{equation}
\label{key ineq}
\exp \left( \rT \rH -\T\HH \right)\leq \eta\cdot e^{-2\epsilon}\cdot \left( \frac{\rT}{\T}\right)^{d/2},
\end{equation}
where $\T=\ln T$.

The key to obtain such $r$ is to estimate the upper bound for the following difference. Since $\Zcor\to 0$, using the Taylor expansion for $H$, we have for any sufficiently large $T$
\begin{align*}
&\rT\rH-\T\HH\\
=& \rT \left( H(0)+\frac{1}{2} \left(\rZcor\right)^{\intercal}H''(0)\left(\rZcor\right)+O\left(\Big\lVert \rZcor\Big\rVert^3\right)\right)\\
-& \T\left(H(0)+\frac{1}{2}\left(\Zcor\right)^{\intercal}H''(0)\left(\Zcor\right)+O\left(\Big\lVert\Zcor\Big\rVert^3\right)\right)\\
=&\ln r+\frac{1}{2} \left(\frac{\rZc}{\sqrt{\rT}}\right)^{\intercal}H''(0)\left(\frac{\rZc}{\sqrt{\rT}}\right)+O\LP \frac{\lVert \rZc\rVert^3}{(\rT)^2}\RP\\
-& \frac{1}{2} \LP \frac{\Zc}{\sqrt{\T}}\RP^{\intercal}H''(0)\LP\frac{\Zc}{\sqrt{\T}}\RP+O\LP\frac{\lVert \Zc\rVert^3}{(\T)^2}\RP.
\end{align*}

We analyze the above sum term by term. Noting that $H''(0)=-(\operatorname{Cov}(N))^{-1}$ with $\operatorname{Cov}(N)$ positive definite, we have
\begin{align*}
&\left(\frac{\rZc}{\sqrt{\rT}}\right)^{\intercal}H''(0)\left(\frac{\rZc}{\sqrt{\rT}}\right)
-  \LP \frac{\Zc}{\sqrt{\T}}\RP^{\intercal}H''(0)\LP\frac{\Zc}{\sqrt{\T}}\RP\\
=&\LP\frac{\rZc}{\sqrt{\rT}}+\frac{\Zc}{\sqrt{\T}}\RP^{\intercal} H''(0) \LP \frac{\rZc}{\sqrt{\rT}}-\frac{\Zc}{\sqrt{\T}}\RP\\
=&\LP\sqrt{\T}\cdot\frac{\rZc}{\sqrt{\rT}}-\Zc\RP^{\intercal} H''(0) \LP\frac{\Zc}{\T}+\frac{1}{\sqrt{\T}}\cdot\frac{\rZc}{\sqrt{\rT}}\RP\\
\leq &C\cdot\Big\lVert \Zc-\sqrt{\T}\cdot\frac{\rZc}{\sqrt{\rT}}\Big \rVert \cdot \Big\lVert \frac{\Zc}{\T}+\frac{1}{\sqrt{\T}}\cdot\frac{\rZc}{\sqrt{\rT}} \Big\rVert,
\end{align*}
where $C>0$ is some constant only depending on $\operatorname{Cov}(N)$. 

Since $ga_{\T}$ is at most $-\ln r$ away from $ga_{\rT}$,  we have $\lVert \Zc-\rZc\rVert\leq -\ln r/M+2$, where $M:=\operatorname{diam}(\Omega_0)$. Then utilizing the property that $\limsup_{T\to \infty}\left| \frac{\xi_T(g)}{\sqrt{T\ln \ln T}}\right|=\sqrt{2}\sigma$,  we have for any large $T$,
\begin{align*}
& \Big\lVert \Zc-\sqrt{\T}\cdot\frac{\rZc}{\sqrt{\rT}} \Big\rVert \\
\leq & \lVert \Zc-\rZc\rVert+\Big\lVert \frac{\rZc}{\sqrt{\rT}}\cdot (\sqrt{\T}-\sqrt{\rT})\Big\rVert\\
\leq &-\frac{\ln r}{M}+2+\Big\lVert \frac{\rZc}{\sqrt{\rT\ln\ln (\rT)}}\Big\rVert\cdot |(\sqrt{\T}-\sqrt{\rT})\cdot \ln \ln (\rT)|\\
\leq &-\frac{\ln r}{M}+2+3\sigma \cdot \Big|\frac{-\ln r \cdot \ln \ln (\rT)}{\sqrt{\T}+\sqrt{\rT}}\Big|\\
\leq &-\ln r(\frac{1}{M}+\epsilon)+2.
\end{align*}

Meanwhile, applying the property that $\lim_{T\to \infty}\frac{\xi_T(g)}{T}=0$, we obtain
\begin{align*}
\Big\lVert \frac{\Zc}{\T}+\frac{1}{\sqrt{\T}}\cdot\frac{\rZc}{\sqrt{\rT}}\Big\rVert
\leq  \Big\lVert \frac{\Zc}{\T}\Big\rVert+\Big\lVert \frac{\rZc}{\rT}\Big\rVert\cdot\frac{\sqrt{\rT}}{\sqrt{\T}}
\leq  3\epsilon.
\end{align*}

For the higher degree terms, we have the  estimate:
\begin{equation*}
\frac{\lVert \Zc\rVert^3}{(\T)^2}=\Big\lVert \frac{\Zc}{\sqrt{\T\ln \ln \T}}\Big\rVert^3 \cdot \sqrt{\frac{(\ln\ln \T)^3}{\T}}\to 0.
\end{equation*}

As a result, when $\epsilon$ is appropriately chosen, for any large $T>0$, we obtain an upper bound:
\begin{align*}
\rT\rH-\T\HH \leq  \sqrt{r}\cdot e^{(3C+2)\epsilon}.
\end{align*}

Since $\LP \frac{\rT}{\T}\RP^{d/2}\to 0$ as $T\to \infty$, if $0<r<1$ satisfies
\begin{equation*}
\sqrt{r}e^{(3C+2)\epsilon}\leq \frac{1}{2}\cdot \eta\cdot e^{-2\epsilon},
\end{equation*}
then such $r$ satisfies (\ref{key ineq}). 

Now recall that every point in $W$ is generic for the horocycle flow (Theorem \ref{generic pts}). For a general non-negative function $\psi \in C_c(\GaG)$ and for any $g\in W$, we have
\begin{equation*}
\lim_{T\to \infty}\frac{\int_{0}^{T}\psi(gu_t)dt}{\int_{0}^{T}\chi_{E}(gu_t)dt}=\frac{\int \psi dm_{\Gamma}}{\int \chi_E dm_{\Gamma}}.
\end{equation*}
This limit together with (\ref{main case for window}) yield (\ref{window property eq}).
\end{proof}

\begin{proof}[Proof of Theorem \ref{inverse window lemma}]
Fix $0<\delta<1$ and some small $0<\epsilon<1$ to be determined later. Let $E$ be the set given by Lemma \ref{key lemma} for $\epsilon$. We just need to show Theorem \ref{inverse window lemma} holds for $\chi_{E}$ and the general statement follows from Hopf's ratio theorem. In view of Lemma \ref{key lemma}, it suffices to show the existence of $c$ satisfying the following inequality:
\begin{align}
\label{inverse window ineq}
& \frac{e^{\epsilon}\,m(E)}{(2\pi\sigma\pT)^{d/2}}\cdot\exp\left(\pT\pH\right)\\
\leq & (1+c)\,\frac{e^{-\epsilon}\,m(E)}{(2\pi\sigma \T)^{d/2}}\exp\left(\T\HH\right).\nonumber
\end{align}

Using the same argument as the proof of Theorem \ref{window lemma}, we obtain an upper bound for the following difference for $T$ large enough:
\begin{align*}
&\pT\pH-\T\HH \\
\leq & \frac{3}{2}\ln (1+\delta)+(3C+2)\epsilon,
\end{align*}
where $C$ is a constant just depending on $\operatorname{Cov}(N)$.

Since $\left(\frac{\pT}{\T}\right)^{d/2}\to 1$ as $T\to \infty$, if $0<c=c(r)<1/4$ satisfies
\begin{equation*}
(1+\delta)^{3/2}\,e^{(3C+2)\epsilon}< (1+c)\,e^{-2\epsilon},
\end{equation*}
then such $c$ makes (\ref{inverse window ineq}) hold.
\end{proof}

\begin{rem}
\label{remark on window lemma}
It can be deduced from the proof that given any non-negative $\psi\in C_c(\GaG)$ and any compact set $\Omega\subset \GaG$, Theorems \ref{window lemma} and \ref{inverse window lemma} can be made uniform on $\Omega$ if  $\frac{\xi_T(\cdot)}{T}$,  $\sup_{t\geq T}\Big|\frac{\xi_t(\cdot)}{t\ln \ln t}\Big|$ and $\frac{\int_{0}^{T}u_t\cdot\psi(\cdot)dt}{\int_{0}^{T}u_t\cdot\chi_E(\cdot)dt}$ converge uniformly on $\Omega$.
\end{rem}

\section{Weak $(C,\alpha)$-good property for $\mathbb{Z}^d$-covers}
\label{weak good property}

Recall that $\Gamma$ is a $\mathbb{Z}^d$-cover for some positive integer $d$. The following terminology is introduced in \cite{KM}.
\begin{defn}
Let $C,\,\alpha>0$ and denote the Lebesgue measure on $\mathbb{R}$ by $|\cdot|$. A function $f:\mathbb{R}\to\mathbb{R}$ is said to be $(C,\alpha)$-good on $\mathbb{R}$ if for any interval $J\subset \mathbb{R}$ and  $\epsilon>0$ one has
\begin{equation*}
|\{x\in J:|f(x)|<\epsilon\}|\leq C\cdot \left(\frac{\epsilon}{\sup_{x\in J}|f(x)|}\right)^{\alpha}\cdot |J|.
\end{equation*}
\end{defn}
It follows from Lagrange's interpolation that if $f$ is a polynomial of degree not greater that $k$, then $f$ is $(k(k+1)^{1/k},1/k)$-good on $\mathbb{R}$. 

 We prove a weak form of $(C,\alpha)$-good property of polynomials which is related to the recurrence of the horocycle flow $\GaG$.  For any positive integer $k$, denote by $\mathcal{P}_k$ the set of polynomials $\Theta: U\to \mathbb{R}$ of degree at most $k$.

\begin{lem}
\label{poly ineq}
Fix $k\geq 1$. For any compact set $K\subset \GaG$ and any small $0<\epsilon<1$, there exists a constant $0<C<1$ (independent of $K$ and $\epsilon$), a compact subset $K_0\subset K$ with $m(K_0)\geq (1-\epsilon)m(K)$ and $T_0=T_0(K_0)>1$ so that the following inequality holds for every $g\in K_0$, $T>T_0$ and $\Theta\in \mathcal{P}_k$:
\begin{equation}
\label{C good property inequality}
\int_{0}^{T}\chi_{K}(gu_t)|\Theta(t)|dt\geq C\cdot\int_{0}^{T}\chi_{K}(gu_t)dt\cdot \sup_{t\in [0,T]}|\Theta(t)|.
\end{equation}
\end{lem}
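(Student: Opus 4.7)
The plan is to combine the classical $(C_k, 1/k)$-good property of polynomials of degree at most $k$ (a consequence of Lagrange interpolation) with the polynomial asymptotic for $R_K(T) := \int_0^T \chi_K(gu_t)\,dt$ given by the Key Lemma (Lemma \ref{key lemma}) and Hopf's ratio ergodic theorem. The underlying heuristic is that, although $R_K(T)/T \to 0$ on a $\mathbb{Z}^d$-cover, $R_K$ distributes its mass on $[0,T]$ approximately proportionally to length on macroscopic sub-intervals; this ``approximate uniformity'' transfers the standard $(C, \alpha)$-good inequality for $|\Theta|$ from Lebesgue measure to the return measure $\chi_K(gu_t)\,dt$.

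First I would construct $K_0$. Starting from the set $E$ produced by Lemma \ref{key lemma} with a small parameter $\epsilon_0 = \epsilon_0(k)$, the conull set $W$ in (\ref{generic set}), and Hopf's ratio ergodic theorem for the pair $(\chi_K, \chi_E)$, Egorov's theorem produces a compact $K_0 \subseteq K \cap W$ with $m(K_0) \geq (1-\epsilon) m(K)$ and a threshold $T_0 > 1$ such that, uniformly for $g \in K_0$ and $T \geq T_0$, the asymptotic of Lemma \ref{key lemma} holds for $\chi_K$ with error factor $e^{\pm \epsilon_0}$. From this, for $T^{1/2} \leq t_0 \leq t_1 \leq T$, one deduces
\begin{equation}\label{eq:Rk-plan}
R_K(t_1) - R_K(t_0) \;\leq\; c_0\,\frac{m(K)\,(t_1-t_0)}{(\ln T)^{d/2}} + c_0\,\epsilon_0\,R_K(T),
\end{equation}
together with the lower bound $R_K(T) \geq c_0^{-1} m(K)\,T/(\ln T)^{d/2}$, where $c_0$ depends only on $d$; the restriction $t_0, t_1 \geq T^{1/2}$ ensures $\ln t_i$ is within a bounded factor of $\ln T$ so that differencing the Key Lemma formula at the two endpoints is controlled.

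Next, for $\Theta \in \mathcal{P}_k$ with $M := \sup_{[0,T]} |\Theta|$, the $(C_k, 1/k)$-good property yields $|B_\delta| \leq C_k \delta^{1/k} T$ for $B_\delta := \{t \in [0,T] : |\Theta(t)| < \delta M\}$, and $B_\delta$ is a disjoint union of at most $k+1$ open intervals since its boundary in $(0, T)$ sits inside the zero sets of the two polynomials $\Theta \mp \delta M$. I would split the components $I_1, \ldots, I_N$ into long ones (length $\geq T^{1/2}$) and short ones. Applying (\ref{eq:Rk-plan}) to each long component and using $|B_\delta| \leq C_k \delta^{1/k} T$ bounds the long contribution by $c_0 m(K) C_k \delta^{1/k} T/(\ln T)^{d/2} + (k+1) c_0 \epsilon_0 R_K(T)$; the at most $k+1$ short components together contribute at most $(k+1) R_K(T^{1/2}) = o(R_K(T))$ by (\ref{eq:Rk-plan}). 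Summing and dividing by $R_K(T)$ yields
\begin{equation*}
R_K(B_\delta) \;\leq\; \bigl(c_1 C_k \delta^{1/k} + c_1 k \epsilon_0 + o(1)\bigr) R_K(T),
\end{equation*}
with $c_1 = c_1(d, k)$. Fixing $\epsilon_0$ and $\delta$ depending only on $d, k$ so that $c_1 k \epsilon_0 \leq 1/8$ and $c_1 C_k \delta^{1/k} \leq 1/4$ gives, for all $T \geq T_0' = T_0'(K_0)$, $R_K(B_\delta) \leq R_K(T)/2$ and hence $R_K([0,T] \setminus B_\delta) \geq R_K(T)/2$. Since $|\Theta| \geq \delta M$ off $B_\delta$,
\begin{equation*}
\int_0^T \chi_K(gu_t) |\Theta(t)|\,dt \;\geq\; \delta M \cdot R_K([0,T] \setminus B_\delta) \;\geq\; \tfrac{\delta}{2}\,M\,R_K(T),
\end{equation*}
which is the lemma with $C = \delta/2$ depending only on $d, k$, in particular independent of $K$ and $\epsilon$.

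The main obstacle is the control of $R_K(B_\delta)$: the naive bound $R_K(B_\delta) \leq |B_\delta|$ is useless since $R_K(T)/T \to 0$, so one genuinely needs the finer proportionality $R_K(I) \lesssim m(K) |I|/(\ln T)^{d/2}$ on macroscopic sub-intervals, which is exactly what the Key Lemma provides. The supplementary structural fact that $B_\delta$ has only $O(k)$ connected components is what allows the short-interval contribution (where the Key Lemma asymptotics become lossy) to remain negligible; without this, the bad set could be spread across many tiny pieces and the approach would break down.
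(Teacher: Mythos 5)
There is a genuine gap at the central step, namely the claimed inequality (\ref{eq:Rk-plan}) together with the companion lower bound $R_K(T)\geq c_0^{-1}m(K)T/(\ln T)^{d/2}$. The Key Lemma does not say that $R_K(T)$ is comparable to $m(K)T/(\ln T)^{d/2}$ up to a constant: its formula carries the extra factor
$F(T):=\exp\bigl(T^{*}\bigl(H(\xi_{T^{*}}(g)/T^{*})-1\bigr)\bigr)$ with $T^{*}=\ln T$. Since $H$ is concave with $H(0)=1$, $\nabla H(0)=0$ and $H''(0)=-\operatorname{Cov}(N)^{-1}$, one has $T^{*}(H-1)\approx -\tfrac12\,\xi_{T^{*}}^{\intercal}\operatorname{Cov}(N)^{-1}\xi_{T^{*}}/T^{*}$, and for $g\in W$ the law of the iterated logarithm (the second condition defining $W$) forces $\|\xi_{T^{*}}(g)\|^{2}/T^{*}$ to reach $(2+o(1))\sigma^{2}\ln\ln T^{*}$ along a subsequence of $T$'s. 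Hence $F(T)$ oscillates between $e^{o(1)}$ and $(\ln\ln T)^{-c}$ for some $c>0$: your lower bound on $R_K(T)$ fails along infinitely many $T$, and, symmetrically, the density of the return measure on a subinterval ending at $t_1\in[T^{1/2},T]$ can exceed the average density on $[0,T]$ by the unbounded factor $F(t_1)/F(T)\lesssim(\ln\ln T)^{c}$. Restricting to $t_0,t_1\geq T^{1/2}$ only controls the harmless $(\ln t)^{d/2}$ factor, not $F$. Carrying the corrected bounds through your argument, the long-component contribution is controlled by $C_k\delta^{1/k}\cdot\max_j F(b_j)/F(T)+O(k\epsilon_0)$ times $R_K(T)$, and no fixed $\delta$ beats the unbounded ratio $\max_j F(b_j)/F(T)$; so the constant $C=\delta/2$ you extract does not exist by this route.

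The paper avoids exactly this pitfall by never comparing a subinterval against the \emph{global} average density. It splits the $O(k)$ components of the bad set according to whether they lie in $[0,r_0T]$ (handled by Window Property I, Theorem \ref{window lemma}, which is a ratio statement between $\int_0^{r_0T}$ and $\int_0^{T}$) or contain a point $t_0>r_0T$, in which case the component sits inside $[(1-l(\delta))t_0,(1+l(\delta))t_0]$ and Window Property II (Theorem \ref{inverse window lemma}) applied \emph{at $t_0$} bounds its mass by $\tfrac{c}{k}\int_0^{t_0}\chi_K\leq\tfrac{c}{k}\int_0^{T}\chi_K$. Both window properties compare the Key Lemma formula at two \emph{multiplicatively comparable} times, where $\xi$ changes by $O(1)$ and the fluctuation factor $F$ cancels up to $e^{O(\epsilon)}$; this is precisely the Taylor-expansion computation in the proofs of Theorems \ref{window lemma} and \ref{inverse window lemma}. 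Your decomposition of $B_\delta$ into $O(k)$ components and the short/long dichotomy parallel the paper's Cases 1 and 2, but the quantitative input must be the two window properties (or an equivalent self-referential ratio estimate), not a uniform-density statement, which is false here.
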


\begin{proof}
Fix $K$ and $\epsilon$. By Theorem \ref{window lemma} and Remark \ref{remark on window lemma}, there exist $0<r_0=r_0(1/2k)<1$, $T_0>1$, a compact set $K'\subset K$ with $m(K')>(1-\epsilon/2)m(K)$ and $T_0=T_0(K')>1$ such that for every $T>T_0$ and every $g\in K'$, we have
\begin{equation}
\label{uniform window lem}
\int_{0}^{r_0T}\chi_{K}(gu_t)dt\leq \frac{1}{2k}\int_{0}^{T}\chi_{K}(gu_t)dt.
\end{equation}

Fix some sufficiently small $\delta>0$. By Theorem \ref{inverse window lemma} together with Remark \ref{remark on window lemma},  there exist $0<c=c(\delta)<1$,  a compact subset $K_0\subset K'$ with $m(K_0)>(1-\epsilon/2) m(K')$  and $T_0'=T_0'(K_0)>1$ so  that the following inequalities hold for every $g\in K_0$ and every $T>T_0'$:
\begin{align}
\label{uniform inverse window lem}
&\int_{T}^{(1+l(\delta))T}\chi_{K}(gu_t)dt\leq \frac{c}{2k}\int_{0}^{T}\chi_{K}(gu_t)dt,\\
& \int_{(1-l(\delta))T}^{T}\chi_{K}(gu_t)dt\leq \frac{c}{2k}\int_{0}^{T}\chi_{K}(gu_t)dt,\nonumber
\end{align}
where $l(\delta):=r_0^{-1}k(k+1)^{1/k}\delta^{1/k}$.

We  show every $g\in K_0$ satisfies (\ref{C good property inequality}). Fix any $T>\max\{T_0,\,T_0'/r_0\}$. We claim that there exists a constant $C_T\in (0,1)$ such that for every $g\in K_0$ and every $\Theta\in \mathcal{P}_k$, we have
\begin{equation}
\label{poly ineq 2}
\int_{0}^{T}\chi_{K}(gu_t)|\Theta(t)|dt\geq C_T\cdot \int_{0}^{T}\chi_{K}(gu_t)dt\cdot\sup_{t\in [0,T]}|\Theta(t)|.
\end{equation}
It can be seen from  the process that $C_T$ can be chosen independent of $T$, $K$ and $\epsilon$.

By multiplying both sides of (\ref{poly ineq 2}) by scalar if necessary, it suffices to verify (\ref{poly ineq 2}) holds for every polynomial in $\mathcal{P}_{k}^{1}=\{\Theta\in \mathcal{P}_k:\sup_{[0,T]}|\Theta(t)|=1\}$.

Let $\Theta\in\mathcal{P}_k^1$. The potential obstacle to obtain (\ref{poly ineq 2}) is the following set
\begin{equation*}
I_{\Theta}:=\{t\in [0,T]:\,|\Theta(t)|<\delta\}.
\end{equation*}
The $(C,\alpha)$-good property of polynomials on $\mathbb{R}$ implies that
\begin{equation*}
|I_{\Theta}|\leq k(k+1)^{1/k}\delta^{1/k}T.
\end{equation*}
Then
\begin{equation*}
\int_{0}^{T}\chi_{K}(gu_t)|\Theta(t)|dt\geq \delta\cdot\int_{[0,T]\backslash I_{\Theta}}\chi_{K}(gu_t)dt.
\end{equation*}
As a result, (\ref{poly ineq 2}) follows if there exists $0<C_T'<1$ such that 
\begin{equation}
\label{bad set ineq}
\int_{I_{\Theta}}\chi_{K}(gu_t)dt\leq C_T'\cdot \int_{0}^{T}\chi_{K}(gu_t)dt.
\end{equation}

Since $\Theta$ is a polynomial of degree at most $k$,  $I_{\Theta}$ consists of at most $k$ intervals with the length of each interval less than $k(k+1)^{1/k}\delta^{1/k}T$. Let $I$ be one of these intervals. There are two cases to discuss.

\textbf{Case 1}. Suppose $I\subset [0,r_0T]$. Then it follows from (\ref{uniform window lem}) that
\begin{equation*}
\int_{I}\chi_{K}(gu_t)dt\leq \int_{0}^{r_0T}\chi_{K}(gu_t)dt\leq \frac{1}{2k}\int_{0}^{T}\chi_{K}(gu_t)dt.
\end{equation*}

\textbf{Case 2}. There exists $t_0\in I\cap (r_0T, T]$. Recalling that $l(\delta)=r
_0^{-1}k(k+1)^{1/k}\delta^{1/k}$, we have
\begin{align*}
I &\subset [t_0-k(k+1)^{1/k}\delta^{1/k}T, t_0+k(k+1)^{1/k}\delta^{1/k}T]\\
& \subset [(1-l(\delta))t_0,(1+l(\delta))t_0].
\end{align*}
Applying (\ref{uniform inverse window lem}), we have
\begin{equation*}
\int_{I}\chi_{K}(gu_t)dt\leq \int_{(1-l(\delta))t_0}^{(1+l(\delta))t_0}\chi_{K}(gu_t)dt\leq \frac{c}{k}\int_{0}^{t_0}\chi_K(gu_t)dt.
\end{equation*}
Therefore (\ref{bad set ineq}) holds for $C_T'=k\cdot \max\{\frac{c}{k},\frac{1}{2k}\}$. Noting that $C_T'$ does not depend on $T$, $K$ and $\epsilon$, the proof of the lemma is completed.
\end{proof}

\section{Rigidity of $AU$-equivariant maps}
For the rest of the paper, let $\Gamma_1$ and $\Gamma_2$  be discrete subgroups of $G$. Denote $\Gamma_i\backslash G$ by $X_i$. Assume $\Gamma_1$ is a $\mathbb{Z}$ or $\mathbb{Z}^2$-cover.
Let 
\begin{equation*}
\varphi_1,\ldots,\varphi_k: X_1\to X_2
\end{equation*}
be Borel measurable maps such that  for any two distinct $i, j$, we have $\varphi_i\neq \varphi_j$ almost everywhere. Define the set-valued map:
\begin{equation*}
\Phi(x)=\{\varphi_1(x),\ldots,\varphi_k(x)\}.
\end{equation*}

This section is devoted to showing the rigidity of $AU$-equivariant maps.
\begin{thm}
\label{AU rigidity}
Suppose that there exists a conull set $X'\subset X_1$ such that for every $x\in X'$ and every $a_su_t\in AU$, we have
\begin{equation*}
\Phi(xa_su_t)=\Phi(x)a_su_t.
\end{equation*}
Then there exists a conull set $X''\subset X'$ such that for all $x\in X''$ and for every $\uu_{r}\in U^{+}$ with $x\uu_{r}\in X''$, we have
\begin{equation}
\label{U+ eq}
\Phi(x\uu_r)=\Phi(x)\uu_r.
\end{equation}
\end{thm}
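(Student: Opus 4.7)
The plan is to deduce $U^+$-equivariance from $AU$-equivariance by exploiting the contraction relation $a_{-s}\uu_r a_s=\uu_{re^{-s}}$ together with the polynomial divergence of horocycle flows. Fixing $r$ and writing $y=x\uu_r$, observe that $ya_{-s}=xa_{-s}\uu_{re^{-s}}$, so for large $s$ the two points $xa_{-s}$ and $ya_{-s}$ are joined by a vanishingly small $U^+$-displacement. Using the $A$-equivariance of $\Phi$ and setting $\Phi(y)\uu_{-r}=\Phi(x)g$, the question becomes whether the conjugate $a_sg a_{-s}$ is forced to be trivial as $s\to\infty$; the $AU$-equivariance will then absorb any residual $U$-component of $g$, leaving $g=e$.

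First I would choose a compact $K\subset X_1$ of large Haar measure and a point $x$ generic for the horocycle flow in the sense of Theorem \ref{generic pts} (so $x\in W$). Because $\Gamma_1$ is a $\mathbb{Z}$ or $\mathbb{Z}^2$-cover, the geodesic flow on $(X_1,\mm)$ is ergodic \cite{Re}, and combined with conservativity this supplies a sequence $s_i\to\infty$ along which both $xa_{-s_i}$ and $x\uu_r a_{-s_i}$ return to $K$, with their $\Phi$-images simultaneously lying in a compact subset of $X_2$ (via a joint recurrence argument on $X_1\times X_2$). Shrinking $K$ to $K_0$ by Lemma \ref{poly ineq}, I then have the uniform bound
\begin{equation*}
\int_{0}^{T}\chi_K(zu_t)|\Theta(t)|\,dt \;\ge\; C\cdot \sup_{t\in[0,T]}|\Theta(t)|\cdot \int_{0}^{T}\chi_K(zu_t)\,dt
\end{equation*}
for every $z\in K_0$, every $T\ge T_0(K_0)$, and every polynomial $\Theta$ of bounded degree.

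The heart of the proof is the supremum estimate
\begin{equation*}
\sup_{t\in[0,e^{s_i}]} d\LP \Phi(x\uu_r a_{-s_i})\uu_{-re^{-s_i}}u_t,\; \Phi(xa_{-s_i})u_t \RP \;=\; O(1).
\end{equation*}
To prove it, I use the $U$-equivariance of $\Phi$ to rewrite both sides as translates of $\Phi(xa_{-s_i})$ by group elements whose entries, for $t\in[0,e^{s_i}]$, depend polynomially on $t$; this reflects the polynomial divergence formula $\uu_{re^{-s_i}}u_t=u_{t'}a_{s'}\uu_{\alpha'}$ with $t',s',\alpha'$ bounded on this range of $t$. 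If the displacement between the two $u_t$-orbits were unbounded, the $(C,\alpha)$-good inequality above would force $\Phi(xa_{-s_i})u_t$ to exit any fixed compact subset of $X_2$ on a positive-density subset of $[0,e^{s_i}]$, contradicting Hopf's ratio theorem for the horocycle flow --- applicable because $x\in W$ --- together with the prescribed recurrence into $K_0$.

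Granting the supremum bound, writing $\Phi(x\uu_r)\uu_{-r}=\Phi(x)g_{s_i}$ and conjugating by $a_{-s_i}$ translates the bound into $a_{-s_i}g_{s_i}a_{s_i}=O(e^{-s_i})$, which forces the $U^+$- and $A$-components of $g_{s_i}$ to decay while the $U$-component is killed by $AU$-equivariance. Sending $i\to\infty$ gives $\Phi(x\uu_r)=\Phi(x)\uu_r$ almost everywhere in $(x,r)$, and a Fubini argument extracts the conull set $X''$ on which (\ref{U+ eq}) holds for every $\uu_r$ with $x\uu_r\in X''$. I expect the principal obstacle to be the supremum estimate itself: in the finite-volume arguments of Ratner and Flaminio--Spatzier, Birkhoff's theorem supplies uniform recurrence for the analogous step, but since the Haar measure on $X_1$ is infinite here, one must instead combine Hopf's ratio theorem for both horocycle and geodesic flows with Lemma \ref{poly ineq}, calibrating the compact sets so that the relevant ratios converge uniformly along the sequence $\{s_i\}$.
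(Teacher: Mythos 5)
Your architecture is the paper's: reduce via $A$-equivariance to comparing $\Phi(x\uu_ra_{-s})\uu_{-e^{-s}r}$ with $\Phi(xa_{-s})$ at geodesic return times $s_i$ supplied by Hopf's ratio theorem on $X_1$, prove a supremum bound over the window $[0,e^{s_i}]$ by combining polynomial divergence with Lemma \ref{poly ineq}, and finish with the conjugation computation $d(e,a_{-s}g_{s}a_s)=O(e^{-s})$. However, the contradiction you propose for the supremum estimate --- that a large displacement ``would force $\Phi(xa_{-s_i})u_t$ to exit any fixed compact subset of $X_2$ on a positive-density subset of $[0,e^{s_i}]$'' --- does not work: two $U$-orbits in $X_2$ can both remain in a fixed compact set while drifting $\rho$ apart, and Hopf's ratio theorem gives no control on the image orbit in $X_2$. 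The actual mechanism compares two bounds on the average of the truncated squared displacement $\Theta_{i,x}(t)=\min\{d(\varphi_i(x\uu_ra_{-s})\uu_{-e^{-s}r}u_t,\Phi(xa_{-s})u_t)^2,\rho^2\}$ over $\{t: xa_{-s}u_t\in\Omega\}$: Lemma \ref{poly ineq} bounds this average below by $(C_1/C_0^2)\cdot\sup\Theta_{i,x}$, while the $U$-equivariance of $\Phi$ plus uniform (Lusin) continuity of the $\varphi_i$ on $\Omega$ bounds it above by $4\epsilon_2^2$ plus a small multiple of $\rho^2$, because for a proportion at least $2(1-\epsilon_1)^5(1-\epsilon_2)^2-1$ of the time \emph{both} $xa_{-s}u_t$ and $xa_{-s}u_tg_t^{-1}$ lie in $\Omega$. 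This density estimate in $X_1$, applied to calibrated sets $K\subset\Omega\subset Q\subset Q^{\pm}$, is where Hopf's ratio theorem for the horocycle flow actually enters. Moreover, since $\Phi$ is set-valued you must separate its branches --- one needs $d(\varphi_i(y),\varphi_j(y))>2\rho$ for $i\neq j$ on $\Omega$ --- so that while the displacement stays below $\rho$ the nearest point of $\Phi(xa_{-s})u_t$ is $\varphi_i(xa_{-s})u_t$ rather than some $\varphi_j(xa_{-s})u_t$, and the truncated displacement is genuinely comparable to a single polynomial of bounded degree.

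Two smaller omissions. The error term $g_t$ in $\uu_{-e^{-s}r}u_t=u_{\beta(t)}g_t$ has size comparable to $r$, not $e^{-s}r$, over the full window $t\in[0,e^s]$, so one must first reduce to $r$ small using the $A$-equivariance (the paper's Lemma \ref{reduction of AU rigidity}); your Fubini remark does not substitute for this. And the ``joint recurrence on $X_1\times X_2$'' you invoke for the $\Phi$-images is neither available in this abstract setting (no invariant measure on the product is given in the hypotheses of the theorem) nor needed: recurrence of $xa_{-s}$ and $x\uu_ra_{-s}$ into $K\subset\Omega$, together with continuity of the $\varphi_i$ on $\Omega$, already controls the images.
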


The proof is inspired by the previous works of Ratner \cite{Ratner 3}, Flaminio-Spatzier \cite{FS, FS 1} and Mohammadi-Oh \cite{MO}. Different from their setting, we now need to deal with infinite measures and make use of Hopf's ratio theorem instead of Birkhoff ergodic theorem.

\subsection{Reduction of Theorem \ref{AU rigidity}}
\begin{lem}
\label{reduction of AU rigidity}
 Theorem \ref{AU rigidity} holds if there exists  a conull set $\tilde{X}\subset X'$ and $r_0>0$ such that for every $x\in \tilde{X}$ and every $r\in (-r_0,\,r_0)$ with $x\uu_r\in \tilde{X}$,
\begin{equation*}
\Phi(x\uu_r)=\Phi(x)\uu_r.
\end{equation*} 
\end{lem}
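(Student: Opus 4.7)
The plan is to exploit the conjugation identity $a_{-s}\uu_{re^{-s}} = \uu_r a_{-s}$, a direct matrix computation in $\mathrm{PSL}_2(\mathbb{R})$, in order to contract an arbitrary $U^+$-displacement $\uu_r$ into the local window $(-r_0,r_0)$ by multiplying on the right by $a_{-s}$ for sufficiently large $s$. Concretely, for $r\in \mathbb{R}$ fixed, I would choose $s>\log(|r|/r_0)$, so that $|r|e^{-s}<r_0$. Provided both $xa_{-s}$ and $x\uu_r a_{-s}=xa_{-s}\uu_{re^{-s}}$ land in $\tilde X$, the local hypothesis applies at $xa_{-s}$ and gives $\Phi(xa_{-s}\uu_{re^{-s}})=\Phi(xa_{-s})\uu_{re^{-s}}$. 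Combining this with the $AU$-equivariance stated in Theorem \ref{AU rigidity} (applied once at $x$ and once at $x\uu_r$, each time with the element $a_{-s}$) should let me peel off $a_{-s}$ on the right and recover (\ref{U+ eq}).

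The main technical point is ensuring simultaneous recurrence: I need arbitrarily large $s$ with both $xa_{-s}\in\tilde X$ and $x\uu_r a_{-s}\in\tilde X$. I would handle this by a Fubini argument using only the $A$-invariance of $m_{\Gamma_1}$. Since $\tilde X\subset X_1$ is $m_{\Gamma_1}$-conull and the geodesic flow preserves $m_{\Gamma_1}$, the set
\begin{equation*}
X'' := \{x\in X':\ xa_{-s}\in \tilde X\text{ for Lebesgue-a.e.\ }s\in\mathbb{R}\}
\end{equation*}
is again conull in $X_1$. Then for every pair $x,\,x\uu_r\in X''$, the sets $\{s:xa_{-s}\in\tilde X\}$ and $\{s:x\uu_r a_{-s}\in\tilde X\}$ are each of full Lebesgue measure in $\mathbb{R}$, so their intersection is unbounded and $s$ may be chosen as large as required.

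With such an $s$ in hand, the verification reduces to a chain of identities:
\begin{equation*}
\Phi(x\uu_r)\,a_{-s}=\Phi(x\uu_r a_{-s})=\Phi(xa_{-s}\uu_{re^{-s}})=\Phi(xa_{-s})\,\uu_{re^{-s}}=\Phi(x)\,a_{-s}\uu_{re^{-s}}=\Phi(x)\,\uu_r a_{-s},
\end{equation*}
where the first and fourth equalities use the $A$-equivariance hypothesis, the second and fifth are the conjugation identity, and the middle equality is the local version. Right-multiplying by $a_s$ gives (\ref{U+ eq}). I do not anticipate any genuine obstacle at this reduction step; it is essentially formal, relying only on the $A$-invariance of $m_{\Gamma_1}$ together with the standing $AU$-equivariance. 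The substantive work lies in producing the local $U^+$-equivariance on some conull $\tilde X$ in the first place, which is where the divergence estimate (\ref{supremum}) and the window properties of Section 3 will actually have to do the heavy lifting.
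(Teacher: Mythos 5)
Your proposal is correct and follows essentially the same route as the paper: define the conull set of points whose backward $A$-orbit lies in $\tilde X$ for almost every time, pick a large common recurrence time $s$ with $|r|e^{-s}<r_0$, and use the conjugation identity $\uu_r a_{-s}=a_{-s}\uu_{re^{-s}}$ together with the $A$-equivariance to reduce to the local hypothesis. The chain of identities you write is exactly the paper's chain of equivalences.
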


\begin{proof}
Set
\begin{equation*}
X'':=\left\{x\in \tilde{X}: \int_{0}^{\infty}\chi_{\tilde{X}^c}(xa_{-s})ds=0\right\}.
\end{equation*}
Then $X''$ is a conull set of $\tilde{X}$. We show  $X''$ satisfies Theorem \ref{AU rigidity}.

Fix any $x\in X''$ and $\uu_r\in U^+$ with $x\uu_r\in X''$. We may assume that $r>0$. The property of $X''$ implies there exists $s>0$ large enough so that $e^{-s}r<r_0$ and $xa_{-s},\,x\uu_{r}a_{-s}\in \tilde{X}$. Then Lemma \ref{reduction of AU rigidity} can be deduced from a series of  equivalent relations:
\begin{align*}
&\Phi(x\uu_r)=\Phi(x)\uu_r \\
 \Longleftrightarrow &\Phi(x\uu_r)a_{-s}=\Phi(x)a_{-s}\uu_{e^{-s}r}\\
\Longleftrightarrow &\Phi(x\uu_ra_{-s})=\Phi(xa_{-s})\uu_{e^{-s}r} && (\text{ by the $A$-equivariance})\\
\Longleftrightarrow & \Phi(xa_{-s}\uu_{e^{-s}r})=\Phi(xa_{-s})\uu_{e^{-s}r} &&(\text{by the property of $\tilde{X}$}).
\end{align*}
\end{proof}

\subsection{Key proposition for Theorem \ref{AU rigidity}}
Recall the polynomial divergence of horocycle flow. It is known (see for example \cite{FS}) that  there are universal constants $\rho_0\in (0,1)$, $C_0>1$ and $n_0\in \mathbb{N}_{+}$ so that for all $x,y\in G$ and any interval $I\subset \mathbb{R}$ on which
\begin{equation*}
(d(xu_t,yu_t))^2<\rho_0^2,
\end{equation*}
there exists a polynomial $P$ of degree at most $n_0$ such that
\begin{equation*}
P(s)/C_0\leq (d(xu_t,yu_t))^2\leq C_0P(s)
\end{equation*}
for all $s\in I$.

We introduce three compact sets $K\subset \Omega\subset Q$ in $X_1$.

\textbf{Construction of  $Q$}. Fix some small $\epsilon_1>0$. Choose a compact set $Q$ in $X_1$ so that there exists a symmetric neighborhood $U$ of $e$ in G satisfying:
\begin{equation*}
m(\cup_{u\in U}Qu\backslash \cap_{u\in U}Qu)<\epsilon_1 m(Q).
\end{equation*}
Denote
\begin{equation}
\label{Q set}
Q^{+}=\cup_{u\in U}Qu \quad \text{and}\quad Q^{-}=\cap_{u\in U}Qu.
\end{equation}

\textbf{Construction of  $\Omega\subset Q$}. Let $\Omega$ be a compact subset of $Q$ satisfying the following properties:
\begin{itemize}
\item $\Omega\subset X'$ ($X'$ is given as Theorem \ref{AU rigidity}).
\item $m(\Omega)>(1-\epsilon_1) m(Q)$.
\item If $i\neq j$, then $\varphi_i(x)\neq \varphi_j(x)$ for every $x\in \Omega$.
\item For every $i\in \{1,\ldots,k\}$, we have $\varphi_i$ continuous on $\Omega$.
\end{itemize}

In view of  the properties of $\Omega$, there exists $\rho \in(0, \min\{\epsilon_1,\rho_0\})$ such that for every $x\in\Omega$, if $i\neq j$, then
\begin{equation}
\label{distinguish pts}
d(\varphi_i(x),\varphi_j(x))>2\rho.
\end{equation}

Set 
\begin{equation*}
\mathcal{F}_1:=\{\chi_{\Omega},\,\chi_{Q},\,\chi_{Q^+}\,\chi_{Q^-}\}.
\end{equation*}

\textbf{Construction of  $K\subset \Omega$}. Let $K$ be a compact subset in $\Omega$ satisfying the following properties:
\begin{itemize}
\item $m(K)>(1-\epsilon_1) m(\Omega)$.
\item Lemma \ref{poly ineq} holds for $\chi_{\Omega}$ on $K$ with constants $C_1$ (independent of $\Omega$, $K$ and $\epsilon_1$) and $T_0$.
\item Hopf's ratio theorem for the horocycle flow holds uniformly on $K$ for the family of functions in $\mathcal{F}_1$.
\end{itemize}

Let $T_1>0$ be the starting point such that for every $T>T_1$, every $x\in K$, and every $f_1,f_2\in\mathcal{F}_1$, we have
\begin{equation}
\label{starting time for horocycle flow}
\frac{\int_{0}^{T}f_1(xu_t)dt}{\int_{0}^{T}f_2(xu_t)dt}>(1-\epsilon_1)\frac{m(f_1)}{m(f_2)}.
\end{equation}

Since $C_0$ and $C_1$ are independent of $\Omega$, $K$ and $\epsilon_1$, we may assume 
\begin{equation}
\label{constant 1}
(1-\epsilon_1)^5 >\max\{\frac{3}{4},\,1-\frac{C_1}{4C_0^2}\}.
\end{equation}

Set
\begin{equation*}
\mathcal{F}_2:=\{\chi_K,\,\chi_{Q},\,\chi_{Q^+},\,\chi_{Q^-}\}.
\end{equation*}

\textbf{Construction of conull set $\tilde{X}\subset X'$}. Let $\tilde{X}$ be a conull subset in $X'$  satisfying the following properties:
\begin{itemize}
\item for every $x\in \tilde{X}$, we have
\begin{equation*}
\int_{0}^{\infty}\chi_K(xa_{-s})ds=\infty. \end{equation*}
\item Hopf's ratio theorem for the geodesic flow holds for every point in $\tilde{X}$ for the family of functions in $\mathcal{F}_2$.
\end{itemize}

We will  show that there exists $r_0>0$ such that for every $x\in \tilde{X}$ and every $r\in (0,r_0)$ with $x\uu_r\in \tilde{X}$,
\begin{equation*}
\Phi(x\uu_r)=\Phi(x)\uu_r.
\end{equation*}

We first prove the following intermediate result:
\begin{prop}
\label{AU rigidity prop}
Under the hypothesis of Theorem \ref{AU rigidity}, there exists $r_0>0$ such that for every $x\in \tilde{X}$, $r\in (0,r_0)$ with $x\uu_r\in \tilde{X}$, and for every $s>\max\{T_0,T_1\}$, if $xa_{-s}, \,x\uu_ra_{-s}\in K$, then
\begin{equation*}
\Phi(x\uu_r)\uu_{-r}\subset \Phi(x)\cdot \{g\in G:\,d(g,e)\leq c\cdot e^{-s}\},
\end{equation*}
where $c>1$ is an absolute constant.
\end{prop}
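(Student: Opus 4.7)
The plan is to recast the claim as a uniform-in-$t$ bound on a polynomial on the time interval $[0,e^s]$, and then extract that bound by combining Hopf's ratio theorem with Lemma~\ref{poly ineq}. Setting $y:=xa_{-s}$ and $y':=xu_r^+a_{-s}=yu_\epsilon^+$ with $\epsilon:=re^{-s}$, the identity $u_r^+a_{-s}=a_{-s}u_\epsilon^+$ shows that any decomposition $\varphi_j(xu_r^+)u_{-r}^+=\varphi_{\pi(j)}(x)h_j$ translates into $\varphi_j(y')u_{-\epsilon}^+=\varphi_{\pi(j)}(y)\tilde h_j$ with $\tilde h_j:=a_sh_ja_{-s}$. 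Writing $\tilde h_j=u_{\tilde\gamma}^+u_{\tilde\beta}a_{\tilde\alpha}$, the desired bound $|h_j|\leq ce^{-s}$ is equivalent to the structured estimates $|\tilde\gamma|=O(e^{-2s})$, $|\tilde\beta|=O(1)$, $|\tilde\alpha|=O(e^{-s})$. These in turn follow, with $T=e^s$, from the single sup-norm estimate
\begin{equation}
\label{sup-bd}
\sup_{t\in[0,e^s]}d\bigl(\varphi_{\pi(j)}(y)u_t,\;\varphi_j(y')u_{-\epsilon}^+u_t\bigr)=O(1),
\end{equation}
by standard Chebyshev bounds for polynomial coefficients: the $u^+$, $a$, $u$ components of $u_{-t}\tilde h_ju_t$ are polynomials in $t$ of degrees $0$, $1$, $2$ respectively, so a uniform bound on $[0,T]$ pins down their coefficients with powers $T^{-j}$.

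To fix the pairing $\pi$, I will exploit the initial closeness of $y$ and $y'$: since $y,y'\in K\subset\Omega$ are at distance $\epsilon=re^{-s}$ in $X_1$, uniform continuity of each $\varphi_i$ on $\Omega$ together with the $2\rho$-separation~\eqref{distinguish pts} selects, for each $j$ and $r_0$ small enough, a unique $\pi(j)$ with $d(\varphi_{\pi(j)}(y),\varphi_j(y')u_{-\epsilon}^+)<\rho$. To propagate this pairing to large $t$, I will factor $u_\epsilon^+u_t=u_{\beta(t)}a_{\alpha(t)}u_{\gamma(t)}^+$ via polynomial divergence: for $r<r_0$ and $t\in[0,e^s]$, the factor $a_{\alpha(t)}u_{\gamma(t)}^+$ lies in the neighborhood $U$ defining $Q^\pm$, so $yu_{\beta(t)}\in Q^-$ forces $y'u_t\in Q$. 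Combining this with the $AU$-equivariance of $\Phi$, Hopf's ratio theorem applied to $y,y'\in K$ for the family $\mathcal F_1$ (uniform by Remark~\ref{remark on window lemma}), and the calibration~\eqref{constant 1}, the set $S$ on which $\pi$ realizes the nearest-neighbor matching inside $\Omega$ will satisfy $|R\setminus S|<(1-(1-\epsilon_1)^5)|R|$, where $R:=[0,T]\cap\{yu_t\in\Omega\}$.

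The final step is to apply Lemma~\ref{poly ineq} to the polynomial $P(t):=|u_{-t}\tilde h_ju_t|^2$ (of degree $\leq 4$ in $t$), which by polynomial divergence is comparable to the squared distance in~\eqref{sup-bd} whenever that distance is $<\rho_0$. Splitting $\int_R P(t)\,dt$ into contributions from $S$ and from $R\setminus S$, bounding $P\leq C_0\rho^2$ on $S$ and $P\leq \sup_{[0,T]}P$ on $R\setminus S$, and rearranging using the measure estimate on $R\setminus S$, will give $\sup_{[0,T]}P\leq c_0$ for an absolute constant $c_0$; this is exactly~\eqref{sup-bd}.

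The main obstacle is the propagation of the pairing $\pi$ across the full interval $[0,e^s]$, because the intrinsic $X_1$-displacement $|u_{-t}u_\epsilon^+u_t|\sim t^2\epsilon$ already reaches size $1$ by $t\sim e^{s/2}/\sqrt{r}$, well before $T=e^s$. The polynomial-divergence reparametrization above is designed to circumvent this by matching $y'u_t$ with $yu_{\beta(t)}$ rather than $yu_t$: the macroscopic $X_1$-displacement is absorbed into a controlled $AU^+$-factor that $AU$-equivariance of $\Phi$ handles modulo a harmless $u_{\gamma(t)}^+$-shift on the target side. The uniform applicability of Hopf's ratio theorem on the compact set $K$ needed for the measure estimate on $R\setminus S$ is precisely where the $\mathbb Z$ or $\mathbb Z^2$-cover hypothesis on $\Gamma_1$ enters, through the window property proved in Section~3.
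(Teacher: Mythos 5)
Your proposal is correct and follows essentially the same route as the paper's proof: the same reparametrization $u^+_{\epsilon}u_t=u_{\beta(t)}\cdot(\text{small }AU^+\text{-factor})$ combined with $AU$-equivariance, the same use of the $Q^{\pm}$-thickening and the uniform Hopf ratio theorem on $K$ to bound the bad set $R\setminus S$, the same application of Lemma \ref{poly ineq} to force the sup of the divergence polynomial below $\rho^2$, and the same final coefficient extraction giving $d(e,a_{-s}g_sa_s)=O(e^{-s})$. The only presentational difference is that the paper runs the integral comparison on $[0,\tilde T]$ for a stopping time $\tilde T$ (the first time the distance reaches $\rho$) and derives a contradiction with the calibration \eqref{constant 2}, which is the rigorous form of your remark that the polynomial comparison is valid only while the distance stays below $\rho_0$; you should incorporate that stopping-time step to make the rearrangement airtight.
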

\begin{proof}
Fix  $x\in \tilde{X}$.  For every $r>0$ and $s>\max\{T_0,T_1\}$, if $x\uu_r\in \tilde{X}$ then
\begin{equation*}
\Phi(x)a_{-s}=\Phi(xa_{-s})
\end{equation*}
and 
\begin{equation*}
\Phi(x\uu_r)\uu_{-r}a_{-s} = \Phi(x\uu_ra_{-s})\uu_{-e^{-s}r}=\Phi(xa_{-s}\uu_{e^{-s}r})\uu_{-e^{-s}r}.
\end{equation*}

We compare the distance between the $U$-orbits of $\Phi(xa_{-s}\uu_{e^{-s}r})\uu_{-e^{-s}r}$ and $\Phi(xa_{-s})$ and show that they do not diverge on average.

\textbf{Step 1}. Let $\rho$ be given as (\ref{distinguish pts}). There exist  $\epsilon_2\in (0,\rho/2)$  and $\epsilon_3\in (0,\epsilon_2)$ such that for every $r\in(0,\epsilon_3)$ and $s>\max\{T_0,T_1\}$, 
if $xa_{-s},\,x\uu_ra_{-s}(=xa_{-s}\uu_{e^{-s}r})\in K$, then 
\begin{equation*} 
d(\varphi_i(x\uu_ra_{-s})\uu_{-e^{-s}r},\,\varphi_i(xa_{-s}))<2\epsilon_2.
\end{equation*}
Moreover we have for every $t\in [0,\max\{T_0,\,T_1\}]$ 
\begin{equation*}
d(\varphi_i(x\uu_ra_{-s})\uu_{-e^{-s}r}u_t,\,\varphi_i(xa_{-s})u_t)<\rho,
\end{equation*}
where $\rho$ is the constant given as (\ref{distinguish pts}).

Since $\varphi_i$ is continuous on $\Omega$, there exists $\epsilon_2\in(0,\rho/2)$ such that for each $i$ and for every $x,\,y\in \Omega$ if
\begin{equation*}
d(\varphi_i(x),\varphi_i(y))<2\epsilon_2,
\end{equation*}
then for all $t\in [0,\max\{T_0,T_1\}]$,
\begin{equation*}
d(\varphi_i(x)u_t,\varphi_i(y)u_t)<\rho,
\end{equation*}
where $\rho$ is the constant given as (\ref{distinguish pts}).

Let $\epsilon_3\in (0,\epsilon_2)$ be a constant so that for every $x,\,y\in \Omega$, if $$d(x,y)<\epsilon_3,$$ then $$d(\varphi_i(x),\varphi_i(y))<\epsilon_2.$$

Consequently, for any $r\in(0,\epsilon_3)$ and $s>\max\{T_0,T_1\}$, if $xa_{-s}$ and $x\uu_ra_{-s}\in K$, then
\begin{equation*}
d(\varphi_i(x\uu_ra_{-s})\uu_{-e^{-s}r},\,\varphi_i(xa_{-s}))
< 2\epsilon_2,
\end{equation*}
and the second inequality follows from the choice of $\epsilon_2$.

In view of (\ref{constant 1}), we can let $\epsilon_2$  small enough such that
\begin{equation}
\label{constant 2}
4\epsilon_2^2+2\rho^2(1-(1-\epsilon_1)^5(1-\epsilon_2)^2)<\frac{C_1\rho^2}{2C_0^2}.
\end{equation}

For the rest of the proof, we fix any $s>\max\{T_0,T_1\}$ and any $r\in (0,\epsilon_3)$ such that $xa_{-s},\,x\uu_ra_{-s}\in K$.  

Define for $t\in [0, e^s]$
\begin{align*}
&\beta(t):=\frac{t}{1-e^{-s}rt},\\
& g_t:=\begin{pmatrix} (1-e^{-s}rt)^{-1} & 0\\ -e^{-s}r & 1-e^{-s}rt\end{pmatrix}.
\end{align*}
It is easy to see $d(e,g_t)<\epsilon_3$. And we have for every $t\in [0, e^{s}]$,
\begin{equation}
\label{change of time}
\uu_{-e^{-s}r}u_{t}=u_{\beta(t)}g_t.
\end{equation}

\textbf{Step 2}.  For $t\in [0,e^s]$, if $xa_{-s}u_{t}g_t^{-1},\,xa_{-s}u_t\in \Omega$, then for every $i\in\{1,\ldots,k\}$
\begin{equation*}
d(\varphi_i(xa_{-s}\uu_{e^{-s}r})\uu_{-e^{-s}r}u_t, \Phi(xa_{-s})u_t)<2\epsilon_2.
\end{equation*}

In fact, we can obtain this inequality by using (\ref{change of time})
\begin{align*}
& d(\varphi_i(xa_{-s}\uu_{e^{-s}r})\uu_{-e^{-s}r}u_t,\,\Phi(xa_{-s})u_t)\\
=& d(\varphi_i(xa_{-s}\uu_{e^{-s}r})u_{\beta (t)}g_t,\,\Phi(xa_{-s})u_{t})\\
=& d(\varphi_{j(i)}(xa_{-s}\uu_{e^{-s}r}u_{\beta (t)})g_t,\,\Phi(xa_{-s}u_{t})) && (\text{by the $U$-equivariance})\\
\leq & d(\varphi_{j(i)}(xa_{-s}u_tg_t^{-1})g_t,\,\varphi_{j(i)}(xa_{-s}u_t))\\
< &2\epsilon_2.
\end{align*}

\textbf{Step 3}. We claim the following inequality holds:
\begin{equation*}
\frac{|\{t\in [0,e^{s}]:\x,\,\xx\in \Omega\}|}{|\{t\in [0,e^s]:\x\in \Omega\}|}\geq 2\cdot (1-\epsilon_1)^5\cdot (1-\epsilon_2)^2-1.
\end{equation*}

 Let $Q^+,\,Q^-$ be the sets defined as (\ref{Q set}). We may assume that $\epsilon_3<\operatorname{diam}(U)$. For every $t\in [0,e^s]$, since $d(e,g_{t})<\epsilon_3$, we have the following relations:
\begin{equation*}
\xymatrix{
  & *+[F]{\xx\in Q^+} \\
*+[F]{\xx\in Q}  \ar@{=>}[ur] & &*+[F]{\x \in Q}\ar@{=>}[ul]\\
*+[F]{\xx\in \Omega}\ar@{=>}[u] & *+[F]{\xx\in Q^-}\ar@{=>}[ur]\ar@{=>}[uu]& *+[F]{\x\in \Omega} \ar@{=>}[u]
}
\end{equation*}

Then
\begin{align*}
&\frac{\int_{0}^{e^s}\chi_{\Omega}(\xx)dt}{\int_{0}^{e^s}\chi_{Q^+}(\xx)dt} \\
=&\frac{\int_{0}^{e^s}\chi_{\Omega}(\xu)dt}{\int_{0}^{e^s}\chi_{Q^+}(\xu)dt} &&(\text{by (\ref{change of time})})\\
=& \frac{\int_{0}^{\frac{e^s}{1-r}}\chi_{\Omega}(x\uu_ra_{-s}u_l)\cdot (1-e^{-s}rt)^2dl}{\int_{0}^{\frac{e^s}{1-r}}\chi_{Q^+}(x\uu_ra_{-s}u_l)\cdot (1-e^{-s}rt)^2dl} &&(l=\beta(t))\\
\geq & (1-r)^2 \cdot \frac{\int_{0}^{\frac{e^s}{1-r}}\chi_{\Omega}(x\uu_ra_{-s}u_l)dl}{\int_{0}^{\frac{e^s}{1-r}}\chi_{Q^+}(x\uu_ra_{-s}u_l)dl}\\
\geq & (1-\epsilon_2)^2\cdot (1-\epsilon_1) \cdot \frac{m(\Omega)}{m(Q^+)} && (\text{since $x\uu_ra_{-s}\in K$ and $s>T_1$})\\
\geq &(1-\epsilon_2)^2\cdot (1-\epsilon_1)^3.
\end{align*}

And
\begin{align*}
&\int_{0}^{e^s}\chi_{\Omega}(\x)dt \\
\geq & (1-\epsilon_1)\cdot \frac{m(\Omega)}{m(Q)}\cdot \int_{0}^{e^s}\chi_{Q}(xa_{-s}u_t)dt\\
\geq  & (1-\epsilon_1)^2 \cdot \int_{0}^{e^s}\chi_{Q^-}(\xx)dt\\
\geq & (1-\epsilon_1)^3\cdot (1-\epsilon_2)^2\cdot \frac{m(Q^-)}{m(Q^+)}\cdot  \int_{0}^{e^s}\chi_{Q^+}(\xx)dt\\
\geq &(1-\epsilon_1)^5\cdot (1-\epsilon_2)^2\cdot \int_{0}^{e^s}\chi_{Q^+}(\xx)dt.
\end{align*}

Consequently,
\begin{align*}
&|\{t\in [0,e^s]:\x,\,\xx\in \Omega\}|\\
\geq & (2\cdot (1-\epsilon_1)^5\cdot (1-\epsilon_2)^2-1)\cdot |\{t\in [0,e^s]: \xx\in Q^+\}|\\
\geq &(2\cdot (1-\epsilon_1)^5\cdot (1-\epsilon_2)^2-1)\cdot |\{t\in[0,e^s]:\x\in \Omega\}|.
\end{align*}
The claim is justified.

\textbf{Step 4}. Let $\rho$ be the constant given as (\ref{distinguish pts}). For each $i$, we claim that
\begin{equation*}
\sup_{t\in [0,e^s]}(d(\varphi_i(xa_{-s}\uu_{e^{-s}r})\uu_{-e^{-s}r}u_t,\,\varphi_i(xa_{-s})u_t))^2\leq \rho^2.
\end{equation*}

Set
\begin{equation*}
\tilde{T}=\inf\{T\in [0,e^s]:(d(\varphi_i(xa_{-s}\uu_{e^{-s}r})\uu_{-e^{-s}r}u_T,\,\varphi_i(xa_{-s})u_T))^2=\rho^2\}.
\end{equation*}
It follows from the choice of $\epsilon_2$ and $\epsilon_3$ in Step 1 that $\tilde{T}>\max\{T_0,T_1\}$. The polynomial divergence of horocycle flow implies that there exists a polynomial $P$ of degree at most $n_0$ such that for every $t\in [0,\tilde{T}]$
\begin{equation*}
P(t)/C_0\leq (d(\varphi_i(xa_{-s}\uu_{e^{-s}r})\uu_{-e^{-s}r}u_t,\,\varphi_i(xa_{-s})u_t))^2\leq C_0P(t).
\end{equation*}

Define
\begin{equation*}
\Theta_{i,x}(t):=\min\{(d(\varphi_i(xa_{-s}\uu_{e^{-s}r})\uu_{-e^{-s}r}u_t,\,\Phi(xa_{-s})u_t))^2,\,\rho^2\}.
\end{equation*}
We have that for any $t\in [0,\tilde{T}]$, if $xa_{-s}u_t\in \Omega$, then
\begin{equation*}
\Theta_{i,x}(t)=(d(\varphi_i(xa_{-s}\uu_{e^{-s}r})\uu_{-e^{-s}r}u_t,\,\varphi_i(xa_{-s})u_t))^2.
\end{equation*}
In fact, if there is another $j\neq i$ satisfying
\begin{equation*}
\Theta_{i,x}(t)=(d(\varphi_i(xa_{-s}\uu_{e^{-s}r})\uu_{-e^{-s}r}u_t,\,\varphi_j(xa_{-s})u_t))^2,
\end{equation*}
then 
\begin{equation*}
d(\varphi_i(xa_{-s})u_t, \varphi_j(xa_{-s})u_t)\leq 2\rho.
\end{equation*}
However both $\varphi_{i}(xa_{-s})u_t$ and $\varphi_{j}(xa_{-s})u_t$ belong to the set $\Phi(xa_{-s}u_t)$. It follows from the property of $\Omega$ that this is a contradiction.

Since $\tilde{T}>T_0$, appying Lemma \ref{poly ineq}, we get
\begin{align}
\label{one poly 1}
& \frac{\int_{0}^{\tilde{T}}\chi_{\Omega}(\x)\Theta_{i,x}(t)dt}{\int_{0}^{\tilde{T}}\chi_{\Omega}(\x)dt}\\
\geq & \frac{\int_{0}^{\tilde{T}}\chi_{\Omega}(\x)Q(t)dt}{C_0\int_{0}^{\tilde{T}}\chi_{\Omega}(\x)dt}\nonumber \\
\geq & \frac{C_1}{C_0}\cdot \sup_{t\in [0,\tilde{T}]} Q(t)\nonumber \\
\geq &\frac{C_1}{C_0^2}\cdot\sup_{t\in[0,\tilde{T}]}(d(\varphi_i(xa_{-s}\uu_{e^{-s}r})\uu_{-e^{-s}r}u_t,\,\varphi_i(xa_{-s})u_t))^2.\nonumber
\end{align}

Meanwhile by the same argument as Steps 2 and 3, we have 
\begin{align}
\label{one poly 2}
&\frac{\int_{0}^{\tilde{T}}\chi_{\Omega}(\x)\Theta_{i,x}(t)dt}{\int_{0}^{\tilde{T}}\chi_{\Omega}(\x)dt}\\
\leq & 4\epsilon_2^2+2\rho^2\cdot(1-(1-\epsilon_1)^5(1-\epsilon_2)^2).\nonumber
\end{align}

If $\tilde{T}<e^s$, then
\begin{equation*}
\sup_{t\in[0,\tilde{T}]}(d(\varphi_i(xa_{-s}\uu_{e^{-s}r})\uu_{-e^{-s}r}u_t,\,\varphi_j(xa_{-s})u_t))^2=\rho^2.
\end{equation*}
And (\ref{one poly 1}) and (\ref{one poly 2}) together imply that 
\begin{equation*}
\frac{C_1\rho^2}{C_0^2}\leq 4\epsilon_2^2+2\rho^2\cdot(1-(1-\epsilon_1)^5(1-\epsilon_2)^2),
\end{equation*}
contradicting (\ref{constant 2}). Therefore $\tilde{T}=e^s$ and the proof of Step 4 is completed.

\textbf{Step 5}. Completion of the proof of Proposition \ref{AU rigidity prop}.
Let $g_{s,i}\in G$ satisfying
\begin{equation*}
\varphi_i(x\uu_ra_{-s})\uu_{-e^{-s}r}=\varphi_i(xa_{-s})g_{s,i}.
\end{equation*}
Step 4 in particular implies that $g_{s,i}$ is contained in an $O(1)$ neighborhood of the identity. 

Write $g_{s,i}=\begin{pmatrix} x_s & y_s\\z_s &w_s\end{pmatrix}$. Then
\begin{equation*}
u_{-t}g_{s,i}u_t=\begin{pmatrix} x_s-tz_s & y_s+t(x_s-w_s)-t^2z_s\\z_s & w_s+tz_s\end{pmatrix}.
\end{equation*}
Therefore it follows from Step 4 and the fact that $\det g_{s,i}=1$ that
\begin{equation*}
|z_s|=O(e^{-2s}),\,\,|1-x_s|=O(e^{-s}),\,\,|1-w_s|=O(e^{-s}),\,\,|y_s|=O(1).
\end{equation*}
This implies 
\begin{equation*}
d(e,a_{-s}g_{s,i}a_s)=O(e^{-s}).
\end{equation*}

In consequence,
\begin{align*}
\varphi_i(x\uu_ra_{-s})\uu_{-e^{-s}r} =&\varphi_i(xa_{-s})g_{s,i}\\
\in & \Phi(xa_{-s})a_s(a_{-s}g_{s,i}a_s)a_{-s}\\
\in &\Phi(x)\cdot\{g\in G:d(g,e)=O(e^{-s})\}\cdot a_{-s}.
\end{align*}

Noting that $\varphi_i(x\uu_ra_{-s})\uu_{-e^{-s}r}\in \Phi(x\uu_r)\uu_{-r}a_{-s}$, we conclude that
\begin{equation*}
\Phi(x\uu_r)\uu_{-r}\subset \Phi(x)\cdot \{g\in G:d(g,e)=O(e^{-s})\}.
\end{equation*}
This proves Proposition \ref{AU rigidity prop} with $r_0=\epsilon_3$ (constructed in Step 1).
\end{proof}

\begin{proof}[Proof of Theorem \ref{AU rigidity}]
Fix any $x\in \tilde{X}$ and $r\in (0,\epsilon_3)$ with $x\uu_r\in \tilde{X}$. We show that there exists an increasing sequence $\{s_n\}\subset\mathbb{R}_{>0}$ such that $xa_{-s_n},\,x\uu_ra_{-s_n}\in K$. 

For any $s>0$, noting that $d(e,\uu_{e^{-s}r})<\epsilon_3$, we have the following relations:
\begin{equation*}
\xymatrix{
&*+[F]{xa_{-s}\in Q^+}\\
*+[F]{x\uu_ra_{-s}\in Q}\ar@{=>}[ur]\\
*+[F]{x\uu_ra_{-s}\in K}\ar@{=>}[u] &*+[F]{xa_{-s}\in Q^-}\ar@{=>}[ul] & *+[F]{xa_{-s}\in K}\ar@{=>}[uul]
}
\end{equation*}

By construction, every point in $\tilde{X}$ is generic for the Hopf's ratio theorem for the geodesic flow with respect to the family of functions in $\mathcal{F}_2$. For any sufficiently large $T$, there exists a constant $c=c(T)$ such that
\begin{align*}
\int_{0}^{T}\chi_{K}(x\uu_ra_{-s})ds\geq & c\cdot \frac{m(K)}{m(Q)}\cdot\int_{0}^{T}\chi_{Q}(x\uu_ra_{-s})ds\\
\geq &c\cdot (1-\epsilon_1)^2\cdot \int_{0}^{T}\chi_{Q^-}(xa_{-s})ds\\
\geq &c^2\cdot(1-\epsilon_1)^2\cdot \frac{m(Q^-)}{m(Q^+)}\cdot \int_{0}^{T}\chi_{Q^+}(xa_{-s})ds\\
\geq &c^2\cdot (1-\epsilon_1)^4\cdot\int_{0}^{T}\chi_{Q^+}(xa_{-s})ds.
\end{align*}

At the same time, we have
\begin{align*}
\int_{0}^{T}\chi_{K}(xa_{-s})ds\geq &c\cdot \frac{m(K)}{m(Q^+)}\cdot \int_{0}^{T}\chi_{Q^+}(xa_{-s})ds\\
\geq & c\cdot (1-\epsilon_1)^4\cdot\int_{0}^{T}\chi_{Q^+}(xa_{-s})ds.
\end{align*}

It can be deduced from the above two inequalities that
\begin{align*}
&|\{s\in [0,T]:\,x\uu_ra_{-s},\,xa_{-s}\in K\}|\\
\geq &(2c^2(1-\epsilon_1)^3-1)\cdot |\{s\in[0,T]:\,xa_{-s}\in K\}|.
\end{align*}
The right-hand side of the above inequality is greater than $0$ because $c$ is close to 1 when $T$ is sufficiently large and
\begin{equation*}
\int_{0}^{\infty}\chi_{K}(xa_{-s})ds=\infty
\end{equation*}
by the property of $\tilde{X}$.

Therefore there exists an increasing sequence $\{s_n\}\subset \mathbb{R}_{>0}$ such that $xa_{-s_n},\,x\uu_ra_{-s_n}\in K$. Applying Proposition \ref{AU rigidity prop}, we have
\begin{equation*}
\Phi(x\uu_r)\uu_{-r}\subset \Phi(x)\cdot \{g\in G:\,d(e,g)=O(e^{-s_{n}})\}.
\end{equation*}
As $s_n\to \infty$, this implies that
\begin{equation*}
\Phi(x\uu_r)\uu_{-r}=\Phi(x).
\end{equation*}
\end{proof}

\section{Joining Classification}
In this section, we prove the classification theorem of ergodic $U$-joinings (Theorem \ref{main thm about ergodic joining}). The proof is divided into several steps. Let $\mu$ be any ergodic $U$-joining measure on $Z:=X_1\times X_2$. First we show that $\mu$ is invariant under the action of $\Delta(A)$ up to conjugation (Corollary \ref{A-invariance}): this consists of showing that $\mu$ is invariant under the action of a nontrivial connected subgroup of $\Delta(A)(\{e\}\times U)$ (Theorem \ref{A-invariance of joining}) and that $\mu$ cannot be invariant under $\{e\}\times U$ (Lemma \ref{not invariant}).  Next we prove that there exist a conull set $\Omega\subset Z$ and a positive integer $l$ so that $\# \pi_1^{-1}(x^1)\cap \Omega=l$ for $m_{\Gamma_1}\text{-a.e.}$ $x^1\in X_1$, where $\pi_1:Z\to X_1$ is the canonical projection (Theorem \ref{atom measures}).  This will yield an $AU$-equivariant set-valued map $\mathcal{Y}: X_1\to X_2$. Applying Theorem \ref{AU rigidity} to $\mathcal{Y}$, we prove that there exists $q_0\in G$ so that $\Gamma_2 q_0\Gamma_1=\cup_{j=1}^{l}\Gamma_2 q_0 \gamma_j$ with $\gamma_j\in \Gamma_1$ and 
\begin{equation*}
\mathcal{Y}(\Gamma_1g)=\{\Gamma_2 q_0\gamma_1g,\ldots,\Gamma_2q_0\gamma_lg\},
\end{equation*}
for $m_{\Gamma_1}\text{-a.e.}$ $\Gamma_1g$ (Proposition \ref{conclusion of joining}). This will eventually imply that $\mu$ is in fact a finite cover self-joining (Definition \ref{finite cover self-joining}), completing the proof of Theorem \ref{main thm about ergodic joining}.

\subsection{$\Delta(A)$-invariance of $\mu$}
Fix the followings:
\begin{enumerate}
\item a non-negative function $\psi\in C_c(X_1)$ with $m_{\Gamma_1}(\psi)>0$ and set
\begin{equation*}
\Psi=\psi\circ \pi_1\in C(Z);
\end{equation*}

\item a compact subset $\Omega \subset X_1$ so that Theorems \ref{window lemma} and \ref{inverse window lemma} hold uniformly for $\psi$ for all $x\in \Omega$;

\item a constant
\begin{equation*}
0<r:=\frac{1}{4}r(\frac{1}{2};\Omega)<1,
\end{equation*}
where $r(\frac{1}{2};\Omega)$ is given as Theorem \ref{window lemma};

\item a compact subset $Q\subset \Omega \times X_2$ such that for any $x\in Q$, every $f\in C_c(Z)$ and $g\in C_c(X_1)$, the following holds:
\begin{equation}
\label{Hopf property}
\lim_{T\to \infty}\frac{\int_{0}^{T}f(x\Delta(u_t))dt}{\int_{0}^{T}g\circ\pi_1(x\Delta(u_t))dt}=\frac{\mu(f)}{\mu(g\circ\pi_1)}.
\end{equation}
\end{enumerate}

Fix a small $\epsilon>0$ and choose $\eta>0$ small enough so that $\mu(Q\{g:|g|<\eta\})\leq (1+\epsilon)\mu(Q)$. We put
\begin{equation*}
Q_{+}:=Q\{g:|g|\leq \eta/4\}\,\,\,\text{and}\,\,\,\mathcal{F}=\{\chi_{Q},\,\chi_{Q_{+}}\}.
\end{equation*}
As every point $Q$ satisfies Theorem \ref{window lemma} as well as (\ref{Hopf property}), a simple computation yields
\begin{equation}
\label{window limit}
\lim_{T\to \infty}\frac{\int_{rT}^{T}f(x\Delta(u_t))dt}{\int_{rT}^{T}\Psi(x\Delta(u_t))dt}=\frac{\mu(f)}{\mu(\Psi)}
\end{equation}
holds for every $f\in \mathcal{F}$ and for $\mu\text{-a.e.}$ $x\in Q$. Set $Q_{\epsilon}$ to be a compact subset in $Q$ with $\mu(Q_{\epsilon})>(1-\epsilon)\mu(Q)$ so that (\ref{window limit}) converges uniformly on $Q_{\epsilon}$.

Denote by $N_{G\times G}(\Delta(U))$ the normalizer of $\Delta(U)$ in $G\times G$.
\begin{thm}
\label{A-invariance of joining}
Let $h_k\in G\times G-N_{G\times G}(\Delta(U))$ be a sequence tending to $e$ as $k\to \infty$. If $Q_{\epsilon}h_k\cap Q_{\epsilon}\neq \emptyset$ for every $k$, then $\mu$ is invariant under a nontrivial connected subgroup of $\Delta(A)(\{e\}\times U)$. Moreover, if $\{h_k\}$ contains a subsequence in $\{e\}\times G$, then $\mu$ is invariant under $\{e\}\times U$.
\end{thm}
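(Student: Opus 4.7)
The plan follows the polynomial divergence strategy of Ratner and Mohammadi-Oh \cite{MO}, adapted to the infinite-measure setting via the window properties. For each $k$, choose $x_k,y_k\in Q_{\epsilon}$ with $y_k=x_k h_k$. Exploiting $\Delta(U)$-invariance of $\mu$, we absorb any $\Delta(U)$-component of the conjugate $\Delta(u_{-t})h_k\Delta(u_t)$ into the horocycle time and write
\begin{equation*}
\Delta(u_{-t})h_k\Delta(u_t)=\Delta(u_{\beta_k(t)})\,q_k(t),
\end{equation*}
with $q_k(t)$ in a fixed local transversal to $\Delta(U)$ in $G\times G$; the coordinates of $q_k(t)$ are rational functions of $t$ of bounded complexity. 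Using the adjoint actions $\operatorname{Ad}(u_{-t})E=E$, $\operatorname{Ad}(u_{-t})F=F-tH-t^2E$, $\operatorname{Ad}(u_{-t})H=H+2tE$ for the standard $\mathfrak{sl}_2$-basis $(E,F,H)$, a direct inspection of each of the five transversal coordinates shows that at the polynomial divergence time $T_k$ at which the transversal norm of $q_k(T_k)$ first reaches a small fixed constant, the surviving directions lie exactly in the complement of $\operatorname{Lie}(\Delta(U))$ inside $\operatorname{Lie}(N_{G\times G}(\Delta(U)))$, namely $\operatorname{Lie}(\Delta(A)(\{e\}\times U))$; the $\Delta(U^+)$ and $\{e\}\times A$, $\{e\}\times U^+$ contributions are of lower order and vanish in the limit. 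The hypothesis $h_k\notin N_{G\times G}(\Delta(U))$ ensures $q_k\not\equiv e$, and $h_k\to e$ forces $T_k\to\infty$; passing to a subsequence gives $q_k(T_k)\to h_{\infty}\neq e$ in $\Delta(A)(\{e\}\times U)$. If moreover $h_k\in\{e\}\times G$, then $q_k(t)\in\{e\}\times G$ throughout, and the same analysis restricts $h_{\infty}$ to $\{e\}\times U$.

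To prove $h_{\infty}$-invariance, we compare the $\Delta(U)$-averages of $f\in C_c(Z)$ along the orbits of $x_k$ and $y_k$. For each $\lambda\in(0,1]$, set $T'_k:=\lambda T_k$; the limit $h_{\infty}(\lambda):=\lim_k q_k(T'_k)$ exists along the subsequence and traces a smooth arc through $e$ and $h_{\infty}$ inside $\Delta(A)(\{e\}\times U)$. For each such $\lambda$, partition the window $[rT'_k,T'_k]$ logarithmically into pieces $I_j=[\lambda_j T'_k,\lambda_{j+1}T'_k]$ with $\lambda_{j+1}/\lambda_j=1+\delta$; the polynomial form of $q_k$ yields $q_k(t)=h_{\infty}(\lambda\lambda_j)\,(1+O(\delta))$ uniformly for $t\in I_j$, while the Jacobian of the time change $t\mapsto t+\beta_k(t)$ is $1+O(\delta)$. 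Applying the identity $f(y_k\Delta(u_t))=f(x_k\Delta(u_{t+\beta_k(t)})q_k(t))$ piece by piece, using uniform continuity of $f$ and the $\Delta(U)$-invariance of $\mu$, and normalizing by $\int_{rT'_k}^{T'_k}\Psi(x_k\Delta(u_t))\,dt$, the uniform Hopf-type convergence (\ref{window limit}) on $Q_{\epsilon}$ applied to both $x_k$ and $y_k$ produces in the limit $k\to\infty$ followed by $\delta\to 0$ the identity $\mu(f)=\mu(f\circ h_{\infty}(\lambda)^{-1})$ for every $\lambda\in(0,1]$ and every $f\in C_c(Z)$. Thus $\mu$ is invariant under every element of the arc, hence under the nontrivial connected one-parameter subgroup of $\Delta(A)(\{e\}\times U)$ it generates; under the second hypothesis the arc lies in $\{e\}\times U$ and we obtain invariance under $\{e\}\times U$ itself.

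The main obstacle is securing these estimates uniformly in $k$, and this is exactly where both window properties enter. Theorem \ref{window lemma}, with the choice $r=\tfrac{1}{4}r(\tfrac{1}{2};\Omega)$, guarantees that the window $[rT'_k,T'_k]$ carries a definite fraction of the total horocyclic mass on $[0,T'_k]$ uniformly on $Q_{\epsilon}$, so that (\ref{window limit}) genuinely applies to integrals restricted to this window at both base points $x_k$ and $y_k$. Theorem \ref{inverse window lemma} controls, uniformly in $k$, the relative mass of each thin sub-window $I_j$ inside $[0,\lambda_{j+1}T'_k]$, which in turn controls the accumulation of the $O(\delta)$ approximation errors on the $I_j$'s and ensures they vanish as $\delta\to 0$. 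Once this uniformity is in hand, the passage from a single $h_{\infty}$ to the full arc and then to the connected subgroup is routine by iterating the argument at every scale $T'_k=\lambda T_k$.
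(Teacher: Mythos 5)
Your first step --- conjugating $h_k$ by $\Delta(u_t)$, extracting the rational-in-$t$ transversal part $q_k(t)$, stopping at the first time its norm reaches a fixed small constant, and identifying the limit arc $\lambda\mapsto h_\infty(\lambda)$ inside $N_{G\times G}(\Delta(U))$ modulo $\Delta(U)$, i.e.\ inside $\Delta(A)(\{e\}\times U)$ --- is correct and coincides with the paper's construction of the quasi-regular map $\varphi$ (for which the paper cites \cite{MO} and \cite{MT}). The gap is in your invariance-extraction step. Over the window $[rT'_k,T'_k]$ the element $q_k(t)$ is \emph{not} approximately constant: in the rescaled variable it sweeps out the entire arc segment $\{h_\infty(s):s\in[r\lambda,\lambda]\}$, and $r$ is a fixed constant that does not tend to $1$. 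Your logarithmic partition acknowledges this, but the main terms on different pieces then involve \emph{different} group elements $h_\infty(\lambda\lambda_j)$, and to recombine them into a single identity you would need the ratio limit $\int_{I_j}g(x_k\Delta(u_t))\,dt\big/\int_{I_j}\Psi(x_k\Delta(u_t))\,dt\to\mu(g)/\mu(\Psi)$ on each \emph{thin} sub-window $I_j=[\lambda_jT'_k,\lambda_{j+1}T'_k]$. No such statement is available: Theorem \ref{window lemma} controls only the full window $[rT,T]$, and Theorem \ref{inverse window lemma} gives only an \emph{upper} bound on the mass of a thin window; in the infinite-measure setting the mass carried by $[\lambda_jT,\lambda_{j+1}T]$ can oscillate with $T$ and there is no equidistribution statement on such windows. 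At best your computation produces, along a further subsequence, an averaged identity $\mu(f)=\sum_jw_j\,\mu(f\circ h_j^{-1})$ with uncontrolled weights $w_j$, not the asserted pointwise identity $\mu(f)=\mu(f\circ h_\infty(\lambda)^{-1})$, so the conclusion of your second paragraph does not follow as written.

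The paper circumvents this by using the window properties only \emph{qualitatively}: the chain of inequalities (\ref{difference of window}) and (\ref{non-empty for nearby pts}) shows that for each scale $T_{k,l}=r^{l}\tau_k$ there is at least one time $t\in[rT_{k,l},T_{k,l}]$ at which both $x_k\Delta(u_{\alpha_k(t)})$ and $y_k\Delta(u_t)$ lie in the fixed compact set $Q$. Passing to limits yields a pair $z_l$ and $z_l\varphi(s)$ in $Q$ with $s\in[r^{l+1}\theta,r^{l}\theta]$, where $\varphi(s)$ is now a single, \emph{fixed} element of $N_{G\times G}(\Delta(U))$; for such an element the time change is affine, the Hopf-ratio argument on $[rT,T]$ applies cleanly, and Lemma \ref{lemma 7.7} gives $\varphi(s)$-invariance directly. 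Running this at every scale $l$ produces nontrivial invariant elements of $\Delta(A)(\{e\}\times U)$ accumulating at $e$, whence a nontrivial connected invariant subgroup; the case $h_k\in\{e\}\times G$ is handled by noting the image of $\varphi$ then lies in $N_{G\times G}(\Delta(U))\cap(\{e\}\times G)=\{e\}\times U$. You should restructure your argument along these lines: first reduce to invariance under fixed normalizer elements found via matched pairs of points in $Q$, and only then let the scale shrink.
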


Given Theorems \ref{window lemma} and \ref{inverse window lemma} in our setting, the proof of Theorem 7.12 in \cite{MO} works here. For readers' convenience, we sketch the proof.

\begin{lem}[Lemma 7.7 in \cite{MO}]
\label{lemma 7.7}
If $h\in N_{G\times G}(\Delta (U))$ satisfies $Q_{\epsilon} h\cap Q_{\epsilon}\neq \emptyset$, then $\mu$ is $h$-invariant. 
\end{lem}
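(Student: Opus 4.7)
The key structural observation is that every $h \in N_{G\times G}(\Delta(U))$ has the form $h = (a_{s}u_{r_1},\, a_{s}u_{r_2})$ for some $s,r_1,r_2\in\mathbb{R}$: indeed $N_G(U)=AU$, and since both $G$-components of $h$ must conjugate $u_t$ to the \emph{same} element of $U$, their $A$-parts must agree. This yields the crucial identity
\begin{equation*}
xh\,\Delta(u_t)\;=\;x\,\Delta(u_{e^{s}t})\,h \qquad \text{for all } t\in\mathbb{R},
\end{equation*}
so right translation by $h$ on a $\Delta(U)$-orbit is nothing but a linear rescaling $t\mapsto e^{s}t$ of the time parameter.

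The body of the argument compares two Hopf-style ratio limits. Choose $x,y\in Q_{\epsilon}$ with $y=xh$, guaranteed by the hypothesis. For $f\in\mathcal{F}$, the uniform convergence (\ref{window limit}) at $y$ gives
\begin{equation*}
\lim_{T\to\infty}\frac{\int_{rT}^{T}f(y\,\Delta(u_t))\,dt}{\int_{rT}^{T}\Psi(y\,\Delta(u_t))\,dt}\;=\;\frac{\mu(f)}{\mu(\Psi)}.
\end{equation*}
Using the structural identity and the substitution $\tau=e^{s}t$ (whose Jacobian $e^{-s}$ cancels between numerator and denominator), the same ratio equals
\begin{equation*}
\frac{\int_{e^{s}rT}^{e^{s}T} f(x\,\Delta(u_\tau)\,h)\,d\tau}{\int_{e^{s}rT}^{e^{s}T}\Psi(x\,\Delta(u_\tau)\,h)\,d\tau}.
\end{equation*}
Setting $T'=e^{s}T$ and applying (\ref{window limit}) at $x\in Q_{\epsilon}$ with the translated test functions $z\mapsto f(zh)$ and $z\mapsto\Psi(zh)$, this ratio tends to $h_*\mu(f)/h_*\mu(\Psi)$. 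Equating the two limits gives $\mu(f)/\mu(\Psi)=h_*\mu(f)/h_*\mu(\Psi)$.

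Letting $f$ range over a sufficiently rich family yields $h_*\mu=c\,\mu$ for a single constant $c>0$. To pin down $c=1$, I invoke the joining hypothesis: $(\pi_1)_*\mu$ is a scalar multiple of $m_{\Gamma_1}$, and $m_{\Gamma_1}$ is right $G$-invariant on $\Gamma_1\backslash G$. Since $\pi_1\circ h=R_{g_1}\circ\pi_1$, one has $(\pi_1)_*(h_*\mu)=(R_{g_1})_*(\pi_1)_*\mu=(\pi_1)_*\mu$, which forces $c=1$ and hence $h_*\mu=\mu$.

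The main technical obstacle is that the translated functions $z\mapsto f(zh)$ for $f\in\mathcal{F}$ are not themselves members of $\mathcal{F}$, so the uniform convergence (\ref{window limit}) must be extended to cover them. This is handled by enlarging $\mathcal{F}$ (and correspondingly inflating the compact set $Q_{\epsilon}$, keeping $\mu(Q_\epsilon)$ close to $\mu(Q)$) at the outset to include all translates by $h$; since window-property constants can be chosen uniformly over any \emph{finite} family of continuous test functions, and $h$ is fixed once the lemma is invoked, this enlargement is harmless.
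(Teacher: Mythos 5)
Your core argument is the right one and matches the standard proof from \cite{MO}: the identification $N_{G\times G}(\Delta(U))=\{(a_su_{r_1},a_su_{r_2})\}$, the identity $xh\Delta(u_t)=x\Delta(u_{e^st})h$, the comparison of the two Hopf ratio limits at $x$ and at $y=xh$ after the time substitution $\tau=e^st$, the conclusion $h_*\mu=c\mu$, and the normalization $c=1$ via $(\pi_1)_*\mu\propto m_{\Gamma_1}$ and the right $G$-invariance of Haar measure are all correct.

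The one genuine problem is your last paragraph. The lemma quantifies over a $Q_\epsilon$ that has already been constructed, and in its application (inside the proof of Theorem \ref{A-invariance of joining}) the element $h=\varphi(s)$ is produced \emph{after} $Q_\epsilon$ is fixed, precisely as an element with $Q_\epsilon h\cap Q_\epsilon\neq\emptyset$. So you cannot ``enlarge $\mathcal{F}$ and inflate $Q_\epsilon$ at the outset to include all translates by $h$'': redefining $Q_\epsilon$ in terms of $h$ makes the hypothesis refer to a different set than the one for which nonempty intersection is known, and the argument becomes circular. The repair is easy and makes the enlargement unnecessary: do not use the restricted-window limit (\ref{window limit}) for the two-element family $\mathcal{F}$ at all. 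The full-window property (\ref{Hopf property}) is assumed to hold at \emph{every} point of $Q$ for \emph{every} $f\in C_c(Z)$ and $g\in C_c(X_1)$; since $f\circ R_h\in C_c(Z)$ and $\Psi\circ R_h=(\psi\circ R_{h_1})\circ\pi_1$ with $\psi\circ R_{h_1}\in C_c(X_1)$ (where $R_h$ denotes right translation by $h=(h_1,h_2)$), applying (\ref{Hopf property}) at $y\in Q$ with $(f,\psi)$ and at $x\in Q$ with $(f\circ R_h,\psi\circ R_{h_1})$, together with your substitution over $[0,T]\mapsto[0,e^sT]$, already yields $\mu(f)/\mu(\Psi)=h_*\mu(f)/h_*\mu(\Psi)$ for all $f\in C_c(Z)$. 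The restricted windows and the uniform convergence on $Q_\epsilon$ are needed elsewhere in the proof of Theorem \ref{A-invariance of joining}, not for this lemma.
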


\begin{proof}[Proof of Theorem \ref{A-invariance of joining}]

Letting $h_k=(h_k^1,h_k^2)$ and $h_k^i=\begin{pmatrix} a_k^i & b_k^i\\ c_k^i & d_k^i\end{pmatrix}$ for $i=1,2$, define for $t\neq -d_k^1/c_k^1$
\begin{equation*}
\alpha_k(t)=\frac{b_k^1+a_k^1t}{d_k^1+c_k^1t}.
\end{equation*}

Let $x_k$ be a point in $Q_{\epsilon}$ so that $y_k=x_kh_k\in Q_{\epsilon}$. We can write 
\begin{equation*}
y_k\Delta(u_t)=x_kh_k\Delta(u_t)=x_k\Delta(u_{\alpha_k(t)})\varphi_k(t)
\end{equation*}
for some $\varphi_k(t)\in AU^{+}\times G$.  Associated to $\varphi_k(t)'s$, we obtain a quasi-regular map $\varphi:\mathbb{R}\to \Delta(A)(\{e\}\times U)$ satisfying
\begin{equation*}
\varphi(t):=\lim_{k}\varphi_k(R_kt),
\end{equation*}
where $\{R_k\}$ is a sequence of positive numbers tending to $\infty$ as $k\to \infty$. We refer readers to Section 7.1 in \cite{MO} or Section 5 in \cite{MT} for details. 

Fix some sufficiently small $\sigma>0$. Since $h_k\to e$ as $k\to \infty$, we can find an increasing sequence $\{T_k\}$ such that for all  large $k$, the derivative of $\alpha_k$ satisfies
\begin{equation}
\label{control of derivative}
1-\sigma \leq \alpha_k'(t)\leq 1+\sigma \,\,\,\text{for any}\,\,\,t\in [0,T_k].
\end{equation}

We claim that there exist constants $c_1>1$ and $\tilde{T}=\tilde{T}(Q_{\epsilon},\Psi)>1$ so that for all large $k$ and for every $f\in \mathcal{F}$, 
\begin{equation}
\label{time change map window formula}
c_1^{-1}\int_{rT}^{T}f(x\Delta(u_t))dt\leq \int_{rT}^{T}f(x\Delta(u_{\alpha_k(t)}))dt\leq c_1\int_{rT}^{T}f(x\Delta(u_t))dt
\end{equation} 
holds for all $x\in Q_{\epsilon}$ and $T\in (\tilde{T},T_k)$. 

As $\alpha_k(0)\to 0$ as $k\to \infty$ and $\alpha_k'(t)$ is close to $1$ for $t\in [0,T_k]$ ((\ref{control of derivative})), replacing $t$ by $\alpha_k(t)$, there exists $T_0>1$ such that for all  large $k$, any $T\in [T_0,T_k]$, any $f\in \mathcal{F}\cup \{\Psi\}$ and $x\in Q_{\epsilon}$,
\begin{align*}
&(1+\sigma)^{-1}\int_{(1+2\sigma)rT}^{(1-2\sigma)T}f(x\Delta(u_t))dt\\
\leq &\int_{rT}^{T}f(x\Delta(u_{\alpha_k(t)}))dt\\
\leq &(1-\sigma)^{-1}\int_{(1-2\sigma)rT}^{(1+2\sigma)T}f(x\Delta(u_t))dt.
\end{align*}
Applying Theorems \ref{window lemma} and \ref{inverse window lemma} to the first and third equations in above inequalities for $f=\Psi$, we can verify that the claim is valid for $\Psi$. 
Note that the limit (\ref{window limit}) converges uniformly on $\Omega_{\epsilon}$, we conclude that there are constants $c_1>1$ and $\tilde{T}=\tilde{T}(Q_{\epsilon},\Psi)>1$ so that for all large $k$ and for every $f\in \mathcal{F}$, (\ref{time change map window formula}) holds for all $x\in Q_{\epsilon}$ and $T\in (\tilde{T},T_k)$. 

Set $\tau_k'$ to be the infimum of $\tau>0$ such that
\begin{equation*}
\sup_{t\in [0,\tau]}d(e,\varphi_k(t))=\eta/4,
\end{equation*}
and put $\tau_k=\min \{\tau_k',T_k\}$. Note that $\theta_k=\tau_k/R_k$ is bounded away from $0$. Passing to a subsequence if necessary, we may assume $\theta_k$'s converge to some $\theta\neq 0$.

Let $T'>1$ be a constant satisfying for all $T>T'$ and for every $z\in Q_{\epsilon}$,
\begin{equation}
\label{uniform convergence window}
\frac{\int_{rT}^{T}\chi_{Q}(z\Delta(u_t))dt}{\int_{rT}^{T}\chi_{Q^+}(z\Delta(u_t))dt}>1-\epsilon.
\end{equation}
Note that we have the following relations: 
\begin{equation*}
x_k\Delta(u_{\alpha_k(t)})\in Q \Rightarrow y_k\Delta(u_t)\in Q_{+} \Leftarrow y_k\Delta(u_t)\in Q.
\end{equation*}
The lower bound for the amount of time when $x_k\Delta(u_{\alpha_k(t)})\in Q$ is given as follows:
\begin{align}
\label{difference of window}
&\int_{rT}^{T}\chi_{Q}(x_k\Delta(u_{\alpha_k(t)}))dt \\
\geq & c_1^{-1}\int_{rT}^{T}\chi_{Q}(x_k\Delta(u_t))dt \nonumber\\
\geq & c_1^{-1}(1-\epsilon)\int_{rT}^{T}\chi_{Q^+}(x_k\Delta(u_t))dt \nonumber\\
\geq & c_1^{-1}(1-\epsilon)\int_{rT}^{T}\chi_{Q}(y_k\Delta(u_{\alpha_k^{-1}(t)}))dt \nonumber\\
\geq &c_1^{-2}(1-\epsilon)\int_{rT}^{T}\chi_{Q}(y_k\Delta(u_{t}))dt \nonumber\\
\geq & c_1^{-2}(1-\epsilon)^2\int_{rT}^{T}\chi_{Q^+}(y_k\Delta(u_t))dt. \nonumber
\end{align}
We can give a lower bound for $|\{t\in [rT,T]: y_k\Delta(u_t)\in Q\}|$ in terms of $|\{t\in [rT,T]:y_k\Delta (u_t)\in Q_{+}\}|$ using (\ref{uniform convergence window}).

These relations together imply that for all large $k$ and all $T\in [T',T_k]$
\begin{equation}
\label{non-empty for nearby pts}
\{t\in [rT,T]: x_k\Delta(u_{\alpha_k(t)}),y_k\Delta(u_t)\in Q\}>0.
\end{equation}

Now for each $k$, let $m_k$ be the largest integer so that $r^{m_k}\tau_k>T_0$. Then for any $l\geq 0$, we have $l\leq m_k$ holds for all large $k$. Applying (\ref{non-empty for nearby pts}) for $T_{k,l}=r^{l}\tau_k$, we obtain $t\in [r^{l+1}\tau_k,r^l\tau_k]$ and $z_{k,l}\in Q$ with $z_{k,l}\varphi_k(t)\in Q$. Passing to a subsequence we get $z_l\in Q$ and $s\in [r^{l+1}\theta,r^l\theta]$ so that $z_l\varphi(s)\in Q$. Therefore  $\mu$ is $\varphi(s)$-invariant by Lemma \ref{lemma 7.7}. If $l$ is large enough, then $\varphi(s)\neq e$ gets arbitrarily close to $e$. The first claim of the theorem is proved noticing that the image of $\varphi$ is contained in $\Delta(A)(\{e\}\times G)$.

As for the second claim,  the construction of $\varphi$ (see Section 7.1 for details) indicates that the image of $\varphi$ is contained in $N_{G\times G}(\Delta(U))\cap (\{e\}\times G)$ if $\{h_k\}\subset \{e\}\times G$. Consequently, under this situation, the joining measure $\mu$ is invariant under $\{e\}\times U$.
\end{proof}

The  following lemma follows from the proof of Lemma 7.16 in \cite{MO}:
\begin{lem}
\label{not invariant}
The ergodic joining measure $\mu$ is not invariant under $\{e\}\times U$.
\end{lem}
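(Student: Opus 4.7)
The plan is a proof by contradiction. Suppose $\mu$ is invariant under $\{e\}\times U$. Since the subgroup generated by $\Delta(U)$ and $\{e\}\times U$ is all of $U\times U$ — explicitly, $(u_s,u_s)\cdot(e,u_t)=(u_s,u_{s+t})$ — the measure $\mu$ is $U\times U$-invariant. Because any $U\times U$-invariant Borel set is \emph{a fortiori} $\Delta(U)$-invariant, the $\Delta(U)$-ergodicity of $\mu$ automatically upgrades to $U\times U$-ergodicity.

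Next I would disintegrate $\mu$ along the fibers of $\pi_1\colon Z\to X_1$. Since $(\pi_1)_*\mu$ is proportional to the locally finite Haar measure $m_{\Gamma_1}$, the preimage of any compact set has finite $\mu$-mass, so a local disintegration
\begin{equation*}
\mu \;=\; \int_{X_1}\delta_{x^1}\otimes\nu_{x^1}\,d(c_1\, m_{\Gamma_1})(x^1)
\end{equation*}
exists for a measurable family $\{\nu_{x^1}\}$ of Radon measures on $X_2$. Because $\{e\}\times U$ preserves every fiber of $\pi_1$, the assumed $\{e\}\times U$-invariance of $\mu$ forces each $\nu_{x^1}$ to be $U$-invariant for $m_{\Gamma_1}$-a.e.\ $x^1$. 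On the other hand, the $\Delta(U)$-invariance of $\mu$ gives the covariance $\nu_{x^1 u_t}=(u_t)_*\nu_{x^1}$, which, combined with $U$-invariance of every fiber measure, collapses to $\nu_{x^1 u_t}=\nu_{x^1}$ a.e. Invoking the ergodicity of the horocycle flow on $(X_1,m_{\Gamma_1})$ — available for $\mathbb{Z}$- and $\mathbb{Z}^2$-covers of compact hyperbolic surfaces, and consistent with the genericity result (Theorem \ref{generic pts}) used throughout — the $U$-equivariant assignment $x^1\mapsto\nu_{x^1}$ must be essentially constant, so $\mu = c_1\, m_{\Gamma_1}\otimes\nu$ for some fixed Radon measure $\nu$ on $X_2$.

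Projecting to $X_2$ then yields $c_2\, m_{\Gamma_2}=(\pi_2)_*\mu = c_1\, m_{\Gamma_1}(X_1)\cdot \nu$. But $m_{\Gamma_1}(X_1)=\infty$ since $\Gamma_1$ is an infinite cover, so the right-hand side is locally finite only when $\nu$ is the zero measure, contradicting the fact that $\mu$ is a nontrivial joining.

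The principal technical obstacle is performing the fiberwise disintegration rigorously in the $\sigma$-finite setting: one must exhaust $X_1$ by compacta, disintegrate on each $\pi_1^{-1}(K)$ where $\mu$ is finite, and then verify compatibility across the exhaustion to obtain a single measurable family $\{\nu_{x^1}\}$. A clean alternative is to phrase the whole argument in terms of compactly supported product test functions $\psi_1\otimes\psi_2$, for which all integrals are a priori finite, and to deduce the factorization $\mu=c_1 m_{\Gamma_1}\otimes \nu$ from $U\times U$-invariance via a direct Fubini-type computation together with the $U$-ergodicity of $(X_1,m_{\Gamma_1})$.
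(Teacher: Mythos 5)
Your proof is correct and is, in substance, the argument the paper delegates to the proof of Lemma 7.16 in \cite{MO}: combining the $\Delta(U)$-equivariance of the fiber measures with their $\{e\}\times U$-invariance and the $U$-ergodicity of $(X_1,m_{\Gamma_1})$ forces $\mu$ to be a product $c\,m_{\Gamma_1}\otimes\nu$ with $\nu$ a probability measure, whose projection to $X_2$ is an infinite multiple of $\nu$ and hence cannot be proportional to the locally finite measure $m_{\Gamma_2}$. The one input you should cite squarely rather than call ``consistent with'' Theorem \ref{generic pts} is the ergodicity of the horocycle flow on $(X_1,m_{\Gamma_1})$, which does hold for $\mathbb{Z}^d$-covers (see \cite{LS}, \cite{Ka}, \cite{Roblin} and Remark \ref{finite cover self-joining is ergodic}); alternatively, a.e.-genericity in the sense of Theorem \ref{generic pts} already implies ergodicity via Hopf's ratio theorem, so that route also closes the gap.
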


Now we draw the following corollary from Theorem \ref{A-invariance of joining} and Lemma \ref{not invariant}:
\begin{cor}
\label{A-invariance}
The ergodic joining measure $\mu$ is invariant under a non-trivial connected subgroup $A'$ of $\Delta(A)(\{e\}\times U)$ which is not contained in $\{e\}\times U$.
\end{cor}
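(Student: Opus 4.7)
The corollary follows by combining Theorem~\ref{A-invariance of joining} with Lemma~\ref{not invariant}. A direct computation shows that $\N = \Delta(A)\cdot(U\times U)$ is a three-dimensional closed Lie subgroup of the six-dimensional group $G\times G$, so its complement in any neighborhood of $e$ is open and dense.

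The main step is to produce a sequence $h_k\in (G\times G)\setminus \N$ with $h_k\to e$ and $Q_\epsilon h_k\cap Q_\epsilon\neq\emptyset$ for every $k$. Since $Q_\epsilon$ is compact with $\mu(Q_\epsilon)>0$ and the right action of $G\times G$ on $Z$ is locally a diffeomorphism, a density-point argument applied at any $x$ in the support of $\mu|_{Q_\epsilon}$ produces elements $g\in G\times G$, arbitrarily close to $e$, with $xg\in Q_\epsilon$. If arbitrarily small such returns were forced to lie inside $\N$, then, quotienting out the $\Delta(U)$ direction along which $\mu$ is already invariant, the transverse structure of $\mu$ near $Q_\epsilon$ would concentrate on the two-dimensional slice cut out by $\N$; combined with the ergodicity of $\mu$, this would force $\mu$ to be invariant under $\{e\}\times U$ and contradict Lemma~\ref{not invariant} outright. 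Hence returns outside $\N$ must exist, yielding the desired sequence. This transversality/recurrence step is the main technical obstacle of the argument; the rest is formal.

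With $\{h_k\}$ in hand, Theorem~\ref{A-invariance of joining} produces a non-trivial connected subgroup $A'\subseteq \Delta(A)(\{e\}\times U)$ preserving $\mu$. To finish, suppose toward contradiction that $A'\subseteq \{e\}\times U$. Since $\{e\}\times U$ is a one-parameter connected subgroup isomorphic to $\mathbb{R}$, its only non-trivial connected subgroup is itself, so $A'=\{e\}\times U$ and $\mu$ would be $\{e\}\times U$-invariant, contradicting Lemma~\ref{not invariant}. Therefore $A'\not\subseteq \{e\}\times U$, which is precisely the claim.
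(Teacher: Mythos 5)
Your overall architecture is the same as the paper's: produce a sequence $h_k\to e$ in $(G\times G)\setminus\N$ with $Q_\epsilon h_k\cap Q_\epsilon\neq\emptyset$, feed it into Theorem~\ref{A-invariance of joining} to get a non-trivial connected subgroup of $\Delta(A)(\{e\}\times U)$ preserving $\mu$, and then use Lemma~\ref{not invariant} to rule out the possibility that this subgroup is $\{e\}\times U$ itself (your observation that $\{e\}\times U$ has no proper non-trivial connected subgroups correctly handles that last step). The problem is the transversality step, which you yourself flag as ``the main technical obstacle'': your argument for it does not go through. You claim that if all small returns of $Q_\epsilon$ to itself lay in $\N=\Delta(A)(U\times U)$, then ergodicity would force $\mu$ to be $\{e\}\times U$-invariant. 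That implication is not justified. A $\Delta(U)$-invariant ergodic measure whose support is locally contained in $\N$-orbits (for instance one carried by a single $\Delta(AU)$-orbit) need not be $\{e\}\times U$-invariant; Lemma~\ref{lemma 7.7} only yields invariance under the specific normalizing elements $h$ that actually occur as returns, and nothing prevents those from all lying in $\Delta(AU)$, say. Moreover Theorem~\ref{A-invariance of joining} is stated only for $h_k\notin\N$, so it cannot be invoked to upgrade returns inside $\N$ into $\{e\}\times U$-invariance either.

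The input you are missing is the joining hypothesis itself: both marginals $(\pi_i)_*\mu=m_{\Gamma_i}$ are Haar measures, hence give zero mass to proper submanifolds. If every sufficiently small return of $\operatorname{supp}\mu\cap Q_\epsilon$ were confined to $\N$, then locally $\operatorname{supp}\mu\cap Q_\epsilon$ would be contained in countably many $\N$-orbits, and $\pi_1$ of each such orbit is a single $AU$-orbit in $X_1$, a two-dimensional submanifold of Haar measure zero. Covering $Q_\epsilon$ by countably many such pieces would give $m_{\Gamma_1}(\pi_1(Q_\epsilon))=0$, contradicting $\mu(Q_\epsilon)>0$ and $(\pi_1)_*\mu=m_{\Gamma_1}$. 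This is exactly the one-line argument the paper uses (``$m_{\Gamma_i}$ does not support on proper Zariski subvarieties''), and it is genuinely needed: without appealing to the marginals, the existence of returns transverse to $\N$ is false for general $\Delta(U)$-invariant measures. Replace your ergodicity argument by this projection argument and the proof is complete.
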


\begin{proof}
Keep the same notations as in Theorem \ref{A-invariance of joining}. In particular, $Q$ is a compact subset with $\mu(Q)>0$ and $Q_{\epsilon}\subset Q$ with $\mu(Q_{\epsilon})\geq (1-\epsilon)\mu(Q)$.

Let $\pi_i:Z\to X_i$ be the canonical projection for $i=1,2$. Since $(\pi_i)_{*}\mu=m_{\Gamma_i}$ and $m_{\Gamma_i}$ does not support on  proper Zariski subvarieties, we can choose sequences $\{x_k\},\,\{y_k\}\subset Q_{\epsilon}$ so that $y_k=x_kh_k$ with $h_k\notin N_{G\times G}(\Delta(U))$ and $h_k$ tends to $e$ as $k\to \infty$.

Applying Theorem \ref{A-invariance of joining} to $\{h_k\}$, we get a map
\begin{equation*}
\varphi: \mathbb{R}\to N_{G\times G}(\Delta(U))\cap \mathcal{L}=\Delta(A)\cdot(\{e\}\times U)
\end{equation*}
so that $\mu$ is invariant under a non-trivial connected subgroup $L$ in the image of $\varphi$. The corollary follows from Lemma \ref{not invariant}.
\end{proof}

By replacing $\mu$ by $(e,u)\cdot \mu$, we may assume that $\mu$ is $\Delta(AU)$-invariant in the rest of the section.

\subsection{Finiteness of fiber measures}
Let $\mathcal{P}(X_2)$ be the set of probability measures. By the standard disintegration theorem, there exists an $m_{\Gamma_1}$-conull set $X_1'\subset X_1$ and a measurable function $X_1'\to \mathcal{P}(X_2)$ given by
$x^1\mapsto \mu_{x^1}^{\pi_1}$ such that for any Borel subsets $Y\subset Z$ and $C\subset X_1$,
\begin{equation}
\label{disintegration formula}
\mu(Y\cap \pi_1^{-1}(C))=\int_{C}\mu_{x^1}^{\pi_1}(Y)d m_{\Gamma_1}(x^1).
\end{equation}
The measure $\mu_{x^1}^{\pi_1}$ is called the fiber measure over $\pi_1^{-1}(x^1)$.

\begin{thm}
\label{atom measures}
There exist a positive integer $l$ and an $m_{\Gamma_1}$-conull subset $X'\subset X_1$ so that $\operatorname{supp}(\mu^{\pi_1}_{x^1})$ is a finite set with cardinality $l$ for all $x^1\in X'$. Furthermore,
\begin{equation*}
\mu_{x^1}^{\pi_1}(x^2)=1/l
\end{equation*}
for any $x^1\in X'$ and $x^2 \in \operatorname{supp}\mu^{\pi_1}_{x^1}$.
\end{thm}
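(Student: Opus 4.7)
The plan is to reduce the statement to an ergodicity argument on a canonical ``mass function'' on $Z$, and to rule out the degenerate non-atomic alternative via polynomial drift and the window property.

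First, apply the standard disintegration theorem for $\pi_1 : Z \to X_1$ to obtain an $m_{\Gamma_1}$-measurable family of probability measures $\{\mu^{\pi_1}_{x^1}\}$ on $X_2$ satisfying (\ref{disintegration formula}). Since $\mu$ is $\Delta(AU)$-invariant by the reduction immediately preceding this theorem, essential uniqueness of the disintegration forces the equivariance
\[
\mu^{\pi_1}_{x^1 \cdot g} \;=\; g_{\ast} \mu^{\pi_1}_{x^1}
\]
for $m_{\Gamma_1}$-a.e.\ $x^1 \in X_1$ and every $g \in AU$. Define the mass function $\phi : Z \to [0,1]$ by $\phi(x^1, x^2) := \mu^{\pi_1}_{x^1}(\{x^2\})$. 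The $AU$-equivariance of the disintegration makes $\phi$ $\Delta(AU)$-invariant, and $\Delta(U)$-ergodicity of $\mu$ then forces $\phi \equiv c$ $\mu$-a.e.\ for some constant $c \in [0,1]$. Unpacking the definition of disintegration yields a dichotomy: either $c > 0$, in which case for $m_{\Gamma_1}$-a.e.\ $x^1$ the measure $\mu^{\pi_1}_{x^1}$ is purely atomic with every atom of mass exactly $c$ -- so $\operatorname{supp}(\mu^{\pi_1}_{x^1})$ is finite of cardinality $l := 1/c \in \mathbb{Z}_{>0}$ and the theorem follows -- or $c = 0$ and $\mu^{\pi_1}_{x^1}$ is non-atomic for $m_{\Gamma_1}$-a.e.\ $x^1$.

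The main obstacle is to rule out the case $c = 0$. Assume it for contradiction. Fubini combined with Hopf's ratio theorem then yields, for any $\delta > 0$, a positive $\mu$-measure set of pairs of distinct points $p_i = (x^1, y_i) \in \operatorname{supp} \mu$ ($i = 1, 2$) in the same $\pi_1$-fiber, with $y_2 = y_1 g$, $g \in G \setminus \{e\}$, $d(g, e) < \delta$, and with both $p_i$ generic for the $\Delta(u_t)$-action on $\mu$. Under translation by $\Delta(u_t)$, the relative displacement becomes $(e, u_{-t} g u_t)$. If $g \notin AU$, the nontrivial $U^{+}$-component of $g$ creates polynomial growth of $u_{-t} g u_t$; using Theorems \ref{window lemma} and \ref{inverse window lemma} together with Hopf's ratio theorem applied to both generic orbits, we extract a sequence $t_k \to \infty$ of joint return times with $p_1 u_{t_k}, p_2 u_{t_k} \in Q_{\epsilon}$, with $\| u_{-t_k} g u_{t_k} \|$ lying in a prescribed bounded annulus about $e$, and with $u_{-t_k} g u_{t_k} \notin AU$. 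Setting $h_k := (e, u_{-t_k} g u_{t_k})$, we then have $h_k \in (\{e\} \times G) \setminus N_{G \times G}(\Delta(U))$ with $Q_\epsilon h_k \cap Q_\epsilon \neq \emptyset$, so Theorem \ref{A-invariance of joining} upgrades $\mu$ to be $\{e\} \times U$-invariant, contradicting Lemma \ref{not invariant}. If instead $g \in AU \setminus \{e\}$, the analogous (and easier) choice of a single joint return time together with Lemma \ref{lemma 7.7} gives $(e, g')$-invariance of $\mu$ for some $g' \in AU \setminus \{e\}$; since $\Delta(AU) \cdot (\{e\} \times AU) = AU \times AU \supset \{e\} \times U$, this again contradicts Lemma \ref{not invariant}.

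The technical heart of the argument is the calibration of the return times $t_k$ in the first sub-case: simultaneously arranging that (i) both $p_i u_{t_k}$ visit $Q_{\epsilon}$, (ii) $\| u_{-t_k} g u_{t_k} \|$ stays inside a prescribed bounded annulus about $e$, and (iii) $u_{-t_k} g u_{t_k} \notin AU$. This is precisely where the two-sided window property and the weak $(C,\alpha)$-good property of Section \ref{weak good property} enter in place of the Birkhoff averaging available in the finite-volume setting of \cite{MO}.
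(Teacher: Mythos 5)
Your overall strategy is the intended one: the paper's proof of Theorem \ref{atom measures} is a one-line citation of Theorem 7.17 of \cite{MO} (and is spelled out in the parallel Proposition \ref{finiteness of factor}), and that argument is exactly (i) rule out a continuous part of the conditionals by producing displacements in $\{e\}\times G$ tending to $e$ and contradicting Lemma \ref{not invariant}, then (ii) obtain equal masses of the atoms from $\Delta(U)$-ergodicity. Your first half --- the mass function $\phi$ being $\Delta(U)$-invariant, hence $\mu$-a.e.\ constant, hence the dichotomy $c>0$ versus $c=0$ --- is a clean variant of the paper's ``maximal atom'' set and is fine.

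The execution of the $c=0$ case, however, contains two concrete errors. First, Theorem \ref{A-invariance of joining} requires the sequence $h_k$ to tend to $e$; your $h_k=(e,u_{-t_k}gu_{t_k})$ are confined to a fixed bounded annulus about $e$ and so do not satisfy this hypothesis, and the theorem cannot be invoked for them. The calibration of return times that you call the ``technical heart'' is in fact the internal content of Theorem \ref{A-invariance of joining} (the quasi-regular map $\varphi(t)=\lim_k\varphi_k(R_kt)$, whose construction is precisely where the window properties enter): the correct move is to feed the theorem the original sequence of small displacements $h_k=(e,g_k)$ with $g_k\to e$, $g_k\neq e$ (one such $g_k$ for each $\delta=1/k$, obtained from accumulation points of the non-atomic conditionals inside $Q_{\epsilon}$), whose second assertion then yields $\{e\}\times U$-invariance directly. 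Second, in the sub-case $g\in AU\setminus\{e\}$ you invoke Lemma \ref{lemma 7.7} for $(e,g)$; but $N_{G\times G}(\Delta(U))=\Delta(AU)(\{e\}\times U)$ meets $\{e\}\times G$ only in $\{e\}\times U$, so for $g\in AU\setminus U$ the element $(e,g)$ does not normalize $\Delta(U)$ and the lemma does not apply (and the identity $\Delta(AU)\cdot(\{e\}\times AU)=AU\times AU$ you cite is a statement about products of sets, not about the group generated by the stabilizer of $\mu$). The repair is the same in both sub-cases: if $g_k\notin U$ for infinitely many $k$, apply Theorem \ref{A-invariance of joining} to $(e,g_k)$; if $g_k\in U$ for infinitely many $k$, apply Lemma \ref{lemma 7.7} to $(e,g_k)$ and let $k\to\infty$, so that the closed stabilizer of $\mu$ contains elements of $\{e\}\times U$ arbitrarily close to, but distinct from, the identity and hence all of $\{e\}\times U$. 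Either way Lemma \ref{not invariant} gives the contradiction.
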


\begin{proof}
This theorem can be regarded as a corollary of Theorem \ref{A-invariance of joining}. It follows from the proof of Theorem 7.17 in \cite{MO}.
\end{proof}

\subsection{Reduction to the rigidity of measurable factors}
By Theorem \ref{atom measures}, there exists a conull set $\tilde{X}\subset X_1$ and a positive integer $l$ so that $\mu^{\pi_1}_{x^1}$ is supported on $l$ points for every $x^1\in \tilde{X}$.

Define a set-valued map $\mathcal{Y}: \tilde{X}\to X_2$ given by
\begin{equation}
\label{set map}
\mathcal{Y}(x^1)=\operatorname{supp}\mu^{\pi_1}_{x^1}.
\end{equation}
It follows from \cite{Rohlin} that there are measurable maps
\begin{equation*}
\varphi_1,\ldots,\varphi_l:\tilde{X}\to X_2
\end{equation*}
so that $\mathcal{Y}(x^1)=\{\varphi_1(x^1),\ldots,\varphi_l(x^1)\}$ for $x^1\in \tilde{X}$. Furthermore, noting that $\mu$ is $\Delta(AU)$-invariant, by possibly changing $\{\mu^{\pi_1}_{x^1}\}$ on a set of $m_{\Gamma_1}$-measure zero, we may assume that $\mathcal{Y}$ is defined on $X_1$ and it is $AU$-equivariant.

\begin{prop}
\label{conclusion of joining}
Let $\mathcal{Y}:X_1\to X_2$ be defined as (\ref{set map}). In particular, we have that $\mathcal{Y}$ is $AU$-equivariant. Then there exists $q_0\in G$ so that $[\Gamma_1:\Gamma_1\cap q_0^{-1}\Gamma_2 q_0]=l$. Putting $\Gamma_2q_0\Gamma_1=\cup_{j=1}^l\Gamma_2q_0\gamma_j$ with $\gamma_j\in \Gamma_1$, we have
\begin{equation*}
\mathcal{Y}(\Gamma_1g)=\{\Gamma_2q_0\gamma_1g,\ldots,\Gamma_2q_0\gamma_lg\} 
\end{equation*}
for $m_{\Gamma_1}\text{-a.e.}$ $\Gamma_1g$. 
\end{prop}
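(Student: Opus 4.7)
The plan is to combine the $AU$-equivariance of $\mathcal{Y}$ with Theorem \ref{AU rigidity} to promote it to full $G$-equivariance, and then extract the commensurability structure by lifting the map to $G$.

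First, since $\mathcal{Y}$ is $AU$-equivariant on a conull set by construction, Theorem \ref{AU rigidity} applies to each component $\varphi_i$ and shows that $\mathcal{Y}$ is additionally $U^+$-equivariant on a conull set. Because $A$, $U$, and $U^+$ together generate $G=\operatorname{PSL}_2(\mathbb{R})$, a standard Fubini argument applied to the Haar measure on $G$ then yields a conull set $X_1''\subset X_1$ on which $\mathcal{Y}$ is fully $G$-equivariant: $\mathcal{Y}(\Gamma_1 g\cdot h)=\mathcal{Y}(\Gamma_1 g)\cdot h$ for $\Gamma_1 g\in X_1''$ and a.e.\ $h\in G$.

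Next, I would lift to $G$. Fix a typical $g_0\in G$ with $\Gamma_1 g_0\in X_1''$, write $\mathcal{Y}(\Gamma_1 g_0)=\{\Gamma_2 q_1,\ldots,\Gamma_2 q_l\}$, and set $\tilde q_i:=q_i g_0^{-1}$. Applying $G$-equivariance at $g_0$ gives, for a.e.\ $g\in G$,
\begin{equation*}
\mathcal{Y}(\Gamma_1 g)=\mathcal{Y}(\Gamma_1 g_0)\cdot g_0^{-1}g=\{\Gamma_2\tilde q_1 g,\ldots,\Gamma_2\tilde q_l g\}.
\end{equation*}
Since $\mathcal{Y}$ is well-defined on the quotient $\Gamma_1\backslash G$, the equality $\mathcal{Y}(\Gamma_1\gamma g)=\mathcal{Y}(\Gamma_1 g)$ for $\gamma\in\Gamma_1$ forces
\begin{equation*}
\{\Gamma_2\tilde q_1\gamma,\ldots,\Gamma_2\tilde q_l\gamma\}=\{\Gamma_2\tilde q_1,\ldots,\Gamma_2\tilde q_l\}
\end{equation*}
for every $\gamma\in\Gamma_1$; that is, right multiplication by $\Gamma_1$ permutes the $l$-element set $\{\Gamma_2\tilde q_i\}$. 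Consequently, each double coset $\Gamma_2\tilde q_i\Gamma_1$ is a finite union of cosets $\Gamma_2\tilde q_j$ drawn from this set.

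Setting $q_0:=\tilde q_1$, this yields a decomposition $\Gamma_2 q_0\Gamma_1=\bigsqcup_{j=1}^{m}\Gamma_2 q_0\gamma_j$ with $\gamma_j\in\Gamma_1$, and the standard orbit–stabilizer identification gives $m=[\Gamma_1:\Gamma_1\cap q_0^{-1}\Gamma_2 q_0]$. To see that $m=l$, suppose some $\tilde q_k$ lies outside $\Gamma_2 q_0\Gamma_1$; then the partition of $\{\Gamma_2\tilde q_1,\ldots,\Gamma_2\tilde q_l\}$ into its $\Gamma_1$-orbits would split $\mathcal{Y}$ into two nontrivial measurable $G$-equivariant, $\Gamma_1$-invariant subfactors. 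Via the disintegration formula \eqref{disintegration formula}, this would give a nontrivial decomposition of $\mu$ into $\Delta(AU)$-invariant pieces, contradicting the $\Delta(U)$-ergodicity of $\mu$. Hence $m=l$, and the stated formula holds for $m_{\Gamma_1}$-a.e.\ $\Gamma_1 g$.

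The main obstacle is the measure-theoretic bookkeeping: cleanly upgrading the a.e.\ $G$-equivariance to a pointwise statement at a single fiber (so that the representatives $\tilde q_i$ can actually be fixed), and then translating the ergodicity of $\mu$ into a rigorous contradiction against a hypothetical splitting into $\Gamma_1$-orbits when $m<l$; the remaining algebra (orbit–stabilizer, identification of double cosets) is routine once the measurable step is in place.
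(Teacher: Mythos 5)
Your proposal follows essentially the same route as the paper's proof: apply Theorem \ref{AU rigidity} to obtain $U^+$-equivariance, upgrade to $G$-equivariance, transport from a single base fiber to get representatives $\tilde q_i$, observe that right multiplication by $\Gamma_1$ permutes the finite set $\{\Gamma_2\tilde q_i\}$, and invoke the $\Delta(U)$-ergodicity of $\mu$ to force the single double coset $\Gamma_2 q_0\Gamma_1$ to exhaust all $l$ cosets (your "subfactor" contradiction is the paper's observation that the set $X_i$ of pairs lying over one $\Gamma_1$-orbit of cosets is $\Delta(U)$-invariant of positive $\mu$-measure, hence conull, so $l_i=l$).

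The one step you leave open --- the "main obstacle" of making the a.e.\ equivariance pointwise at a single fiber --- is exactly the content of the paper's Lemma \ref{G-equivariant map}, and it requires more than a bare Fubini statement. The paper chooses $x_0$ in the conull set $\tilde X$ with $\int_{\mathbb R}\chi_{\tilde X^c}(x_0u^+_r)\,dr=0$, \emph{defines} $\mathcal Y_0(x_0g):=\mathcal Y(x_0)g$, and then must verify well-definedness: first on $x_0U^+AU$, using that the good $U^+$-times form a conull subset of $\mathbb R$ so that any relation $x_0u^+_{r_1}a_su_t=x_0u^+_{r_2}$ can be tested after composing with a generic $u^+_r$; and then on all of $X_1$, by approximating a stabilizer element $g$ (with $x_0g=x_0$) by $g_n\in U^+AU$, writing $x_0g_n=x_0h_n$ with $h_n\to e$ in a fixed neighborhood contained in $U^+AU$, and passing to the limit to conclude $\mathcal Y(x_0)g=\mathcal Y(x_0)$. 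This last point is the substantive issue your sketch does not address: one must rule out that the (dense, non-discrete in general position) stabilizer of $x_0$ in $G$ moves the finite set $\mathcal Y(x_0)$, which is what makes the representatives $\tilde q_i$ well defined on the quotient. With that lemma supplied, the rest of your argument (the permutation action of $\Gamma_1$, orbit--stabilizer, and the ergodicity step via \eqref{disintegration formula}) is correct and coincides with the paper's.
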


\begin{lem}
\label{G-equivariant map}
There exists a set-valued map $\mathcal{Y}_0:X_1\to X_2$ so that $\mathcal{Y}_0$ is $G$-equivariant and it agrees with $\mathcal{Y}$ on a conull set of $X_1$.
\end{lem}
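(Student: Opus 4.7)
The strategy is to use Theorem~\ref{AU rigidity} to upgrade the $AU$-equivariance of $\mathcal{Y}$ to equivariance under each of $A$, $U$, and $U^+$, and then to leverage the fact that these three one-parameter subgroups generate $G$ in order to produce a genuine $G$-equivariant representative.

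First I would apply Theorem~\ref{AU rigidity} to $\mathcal{Y}$. This yields an $m_{\Gamma_1}$-conull subset $X''\subseteq X_1$ on which $\mathcal{Y}$ is simultaneously equivariant under $A$, $U$ and $U^+$: $\mathcal{Y}(xh)=\mathcal{Y}(x)h$ whenever $x,xh\in X''$ and $h\in A\cup U\cup U^+$. For each $x\in X_1$ put $G(x):=\{g\in G:xg\in X''\}$, and let $\tilde X$ be the set of those $x$ for which $G(x)$ is conull in $G$; by Fubini $\tilde X$ is $G$-invariant and $m_{\Gamma_1}$-conull. For $x\in\tilde X$ consider the measurable map
\begin{equation*}
\Psi_x(g):=\mathcal{Y}(xg)\,g^{-1},\qquad g\in G(x).
\end{equation*}
A direct computation using the three equivariances shows $\Psi_x(gh)=\Psi_x(g)$ whenever $h\in A\cup U\cup U^+$ and both $g, gh\in G(x)$.

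Since $U^+AU$ fills a Zariski-open neighborhood of $e$ via the Bruhat decomposition and $A\cup U\cup U^+$ generates $G$, a Mautner-type Fubini argument forces $\Psi_x$ to be essentially constant on $G(x)$. I would set $\mathcal{Y}_0(x)$ equal to this $m_G$-a.e.\ value for $x\in\tilde X$, and extend by a $G$-equivariant Borel section on the null, $G$-invariant complement $X_1\setminus\tilde X$. By construction $\mathcal{Y}_0:X_1\to X_2$ is $G$-equivariant and coincides with $\mathcal{Y}$ on the conull set $\tilde X\cap X''$.

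The principal obstacle is this last Mautner-type step: the identity $\Psi_x(gh)=\Psi_x(g)$ only applies when both $g$ and $gh$ happen to lie in $G(x)$, so I must verify that the equivalence relation on $G(x)$ generated by these ``legal'' right translations by elements of $A\cup U\cup U^+$ has a single class $m_G$-almost everywhere. This is a measurable incarnation of the purely algebraic statement $\langle A,U,U^+\rangle=G$, and the care it requires is in handling the null sets produced by the repeated use of Fubini together with the local product decomposition near the identity.
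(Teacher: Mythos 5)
Your route is genuinely different from the paper's, and it is essentially sound. The paper anchors the whole construction at a single well-chosen point $x_0$ (one for which $\{r: x_0u^+_r\in\tilde X\}$ is conull in $\mathbb{R}$), defines $\mathcal{Y}_0(x_0g):=\mathcal{Y}(x_0)g$ on the single $G$-orbit $x_0G=X_1$, and reduces everything to a well-definedness check: first on $x_0U^+AU$, by inserting one additional almost-sure $u^+_r$-translation and invoking the exact $AU$-equivariance, and then on all of $G$ by letting $U^+AU\ni g_n\to g$ and using continuity of the distance. You instead define $\mathcal{Y}_0(x)$ pointwise as the essential value of $\Psi_x(g)=\mathcal{Y}(xg)g^{-1}$. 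What this buys you: since $X_1=\Gamma_1\backslash G$ is a single $G$-orbit, your $G$-invariant conull set $\tilde X$ is automatically all of $X_1$, so the ``extension by a Borel section'' step is vacuous, the identity $\Psi_{xg_0}(g)=\Psi_x(g_0g)g_0$ gives equivariance everywhere for free, and you never have to confront the stabilizer of $x_0$ or run the paper's limiting argument. What it costs you is the essential-constancy claim you flag as the principal obstacle.

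That flagged step is real but resolves much more easily than the iterated ``Mautner-type'' analysis of the generated equivalence relation that you describe; you should not leave it at that level of generality. For $x\in X''$ with $G(x)$ conull, Fubini gives a conull set $R_x\subset\mathbb{R}$ of $r$ with $xu^+_r\in X''$, whence $\Psi_x(u^+_r)=\mathcal{Y}(x)$. Since Theorem \ref{AU rigidity} assumes $AU$-equivariance for \emph{every} $a_su_t$ at every point of $X'$ (not merely almost every), it follows that $\Psi_x(u^+_ra_su_t)=\Psi_x(u^+_r)=\mathcal{Y}(x)$ for all $s,t\in\mathbb{R}$ and all $r\in R_x$. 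The set $\{u^+_ra_su_t: r\in R_x,\ s,t\in\mathbb{R}\}$ is conull in $U^+AU$, hence in $G$, so $\Psi_x$ is a.e.\ equal to $\mathcal{Y}(x)$: one almost-sure $U^+$-step followed by the exact $AU$-equivariance already sweeps out a conull set, with no iteration needed. This is precisely the two-step move the paper performs inside its well-definedness check on $x_0U^+AU$, so once the gap is filled the two proofs are the same computation packaged differently. With that insertion (plus a one-line remark that $x\mapsto\mathcal{Y}_0(x)$ is measurable and that $\Psi_x(e)=\mathcal{Y}(x)$ identifies the essential value for a.e.\ $x$, giving the agreement with $\mathcal{Y}$ on a conull set), your argument is complete.
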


\begin{proof}
Applying Theorem \ref{AU rigidity} to $\mathcal{Y}$, we obtain a conull subset $\tilde{X}\subset X_1$ so that for every $x^1\in \tilde{X}$ and every $\uu_r\in U^+$ with $x\uu_r\in \tilde{X}$, 
\begin{equation*}
\mathcal{Y}(x\uu_r)=\mathcal{Y}(x)\uu_r.
\end{equation*}

Using Fubini theorem, we know that for $m_{\Gamma_1}\text{-a.e.}$ $x\in X_1$,
\begin{equation}
\label{infinite reccurence}
\int_{\mathbb{R}}\chi_{\tilde{X}^c}(x\uu_r)dr=0.
\end{equation}
Fix $x_0\in \tilde{X}$ so that $x_0$ satisfies (\ref{infinite reccurence}). Denote $U^{+}(x_0)=\{\uu_r:\,x_0\uu_r\in \tilde{X}\}$. Identifying $U^{+}$ with $\mathbb{R}$, $U^{+}(x_0)$ is a conull set in $U^+$.

Define another set-valued map $\mathcal{Y}_0:X_1\to X_2$ by $\mathcal{Y}_0(x_0g)=\mathcal{Y}(x_0)g.$ We need to verify that $\mathcal{Y}_0$ is well-defined. We first show that $\mathcal{Y}_0$ is well-defined on $x_0U^+AU$. It suffices to show that for any two points $x_0\uu_{r_1}a_su_t$ and $x_0\uu_{r_2}$, if $x_0\uu_{r_1}a_{s}u_t=x_0\uu_{r_2}$, then
\begin{equation*}
\mathcal{Y}(x_0)\uu_{r_1}a_su_t=\mathcal{Y}(x_0)\uu_{r_2}.
\end{equation*}
Since $U^+(x_0)$ is a conull set in $U^+$, there exists $\uu_r\in U^+$ satisfying
\begin{itemize}
\item $x_0\uu_{r+r_2}\in \tilde{X}$;

\item $\uu_{r_1}a_su_t\uu_{r}=\uu_{r'}a_{s'}u_{t'}$ with $\uu_{r'}\in U^+(x_0)$.
\end{itemize}
We have
\begin{align*}
\mathcal{Y}(x_0)\uu_{r_1}a_su_t\uu_r 
=\mathcal{Y}(x_0\uu_{r'}a_{s'}u_{t'})
=\mathcal{Y}(x_0\uu_{r+r_2})
=\mathcal{Y}(x_0)\uu_{r+r_2},
\end{align*}
which implies $\mathcal{Y}_0$ is well-defined on $x_0U^{+}AU$.

Next we show that $\mathcal{Y}_0$ is well-defined on $X_1$. Suppose $x_0g=x_0$. We prove
\begin{equation*}
\mathcal{Y}(x_0)g=\mathcal{Y}(x_0).
\end{equation*}
Let $\{g_n\}$ be a sequence in $U^+AU$ tending $g$ as $n\to \infty$. For every $i\in\{1,\ldots,l\}$, we have
\begin{align*}
& d(\varphi_i(x_0),\mathcal{Y}(x_0)g)
=\min_{1\leq j\leq l}d(\varphi_i(x_0),\varphi_j(x_0)g)\\
\leq &\min_{1\leq j\leq l}(d(\varphi_i(x_0),\,\varphi_j(x_0)g_n)+d(\varphi_j(x_0)g_n,\varphi_j(x_0)g))\\
\leq &d(\varphi_i(x_0),\mathcal{Y}(x_0)g_n)+d(g_n,g)\\
=& d(\varphi_i(x_0),\mathcal{Y}_0(x_0g_n))+d(g_n,g).
\end{align*}
Observe that $U^+AU$ contains a neighborhood $V$ of $e$ in $G$. There exists a sequence $\{h_n\}$ in $V$ so that $x_0h_n=x_0g_n$ and $h_n$ tends to $e$ as $n\to \infty$. This implies that
\begin{align*}
& d(\varphi_i(x_0),\mathcal{Y}(x_0)g)\leq d(\varphi_i(x_0),\mathcal{Y}_0(x_0h_n))+d(g_n,g)\\
=& d(\varphi_i(x_0),\mathcal{Y}(x_0)h_n)+d(g_n,g)\\
\leq & d(\varphi_i(x_0),\varphi_i(x_0)h_n)+d(g_n, g)\\
\to & 0 && \text{as} \,\,n\to \infty.
\end{align*}
Therefore $\mathcal{Y}_0$ is well-defined and $\mathcal{Y}_0$ agrees with $\mathcal{Y}$ on $x_0U^+(x_0)AU$.
\end{proof}

\begin{proof}[Proof of Proposition \ref{conclusion of joining}]
By Lemma \ref{G-equivariant map}, we can show the proposition for $\mathcal{Y}_0$. Let $x_0\in X_1$ be the point given in the proof of Lemma \ref{G-equivariant map}. Write $x_0=\Gamma_1g_0$ and 
\begin{equation*}
\mathcal{Y}_0(\Gamma_1g_0)=\{\Gamma_2h_1,\ldots,\Gamma_2h_l\}.
\end{equation*}

The $G$-equivariance of $\mathcal{Y}_0$ implies $\mathcal{Y}_0(\Gamma_1g_0)$ is $g_0^{-1}\Gamma_1g_0$-invariant. Putting $q_i=h_ig_0^{-1}$ for $i=1,\ldots,l$, we have for every $i$
\begin{equation}
\label{finite index subgp}
 \Gamma_2q_i\Gamma_1\subset \{\Gamma_2q_1,\ldots,\Gamma_2q_l\}.
 \end{equation}
This implies $\Gamma_1\cap q_i^{-1}\Gamma_2q_i$ is a finite index subgroup of $\Gamma_1$.

Fixing $i$, assume that $[\Gamma_1:\Gamma_1\cap q_i^{-1}\Gamma_2q_i]=l_i\leq l$. In view of (\ref{finite index subgp}), we have $$\Gamma_2q_i\Gamma_1=\{\Gamma_2q_{i_1},\ldots,\Gamma_2q_{l_i}\}.$$ Consider the set
\begin{equation*}
X_i:=\{(x^1,x^2):x^1=\Gamma_1g, x^2\in \{\Gamma_2q_{i_1}g,\ldots,\Gamma_2q_{l_i}g\}\}.
\end{equation*}
Observe the set $$X=\{(x^1,x^2):x^1=\Gamma_1g,x^2\in \mathcal{Y}_0(x^1)=\{\Gamma_2q_1g,\ldots,\Gamma_2q_lg\}\},$$ is a conull set for the joining measure $\mu$ since
$\mathcal{Y}_0$ agrees with $\mathcal{Y}$ almost everywhere.  Then $\mu^{\pi_1}_{x^1}(X_i)=l_i/l$ for $m_{\Gamma_1}\text{-a.e.}$ $x^1\in X_1$. As $X_i$ is $\Delta(U)$-invariant set with positive measure, we conclude $l_i=l$ and $X_i$ agrees with $X$ up to sets of measure zero. Therefore $q_i\in G$ is an element satisfying Proposition \ref{conclusion of joining}.
\end{proof}

\begin{proof}[Proof of Theorem \ref{main thm about ergodic joining}]
Keep the notations in Proposition \ref{conclusion of joining}. In particular, let $q_0\in G$ be an element satisfying Proposition \ref{conclusion of joining} so that $\Gamma_2q_0\Gamma_1=\cup_{j=1}^{l}\Gamma_2q_0\gamma_j$ with $\gamma_j\in \Gamma_1$.

For the ergodic $U$-joining measure $\mu$, recall the disintegration of $\mu$ in terms of $\mu^{\pi_1}_{\Gamma_1g}$ (\ref{disintegration formula}). It follows from Proposition \ref{conclusion of joining} that $\mu^{\pi_1}_{\Gamma_1g}$ is a uniformly distributed on $\{\Gamma_2q_0\gamma_1g,\ldots,\Gamma_2q_0\gamma_lg\}$ for $m_{\Gamma_1}\text{-a.e.}$ $\Gamma_1g$. This implies that $\mu$ is $\Delta(G)$-invariant.

Letting $\Gamma_0=\Gamma_1\cap q_0^{-1}\Gamma_2q_0$, the map
\begin{equation*}
\psi:\Gamma_0\backslash G\to \Gamma_1\backslash G\times \Gamma_2\backslash G
\end{equation*}
given by $\Gamma_0g\mapsto (\Gamma_1g,\Gamma_2q_0g)$ provides a homeomorphism between $\Gamma_0\backslash G$ and its image. Then the pullback of $\mu$  through $\psi$ provides a $G$-invariant measure on $\Gamma_0\backslash G$. Therefore $\mu$  is a multiple of the pushforward of $m_{\Gamma_0}$ through $\psi$.

Now we show $\Gamma_0$ is also a finite index subgroup of $q_0^{-1}\Gamma_2q_0$. Choose a neighborhood $B$ of $e$ in G so that $B\cap q_0^{-1}\Gamma_2q_0=\{e\}$. Up to scalars, we have
\begin{align*}
m_{\Gamma_2}(\Gamma_2q_0B)&=\mu(\Gamma_1\backslash G\times \Gamma_2q_0B)\\
&=m_{\Gamma_0}(\psi^{-1}(\Gamma_1\backslash G\times \Gamma_2q_0B))\\
&=\sum_{\alpha}m_{\Gamma}(\Gamma \gamma_{\alpha}B),
\end{align*}
where $\{\Gamma \gamma_{\alpha}\}_{\alpha}$ are the cosets of $\Gamma$ in $q_0^{-1}\Gamma_2q_0$. Since $m_{\Gamma_2}(\Gamma_2q_0B)<\infty$, this equality implies that $\Gamma_0$ is a finite index subgroup of $q_0^{-1}\Gamma_2q_0$.

In conclusion, the ergodic $U$-joining measure $\mu$ is a finite cover self-joining (Definition \ref{finite cover self-joining}).
\end{proof}

\begin{rem}
\label{finite cover self-joining is ergodic}
We provide a proof here showing that the $U$-action on $\Gamma_0\backslash G$ is ergodic with respect to $m_{\Gamma_0}$. To see this, note that $\Gamma_1$ is of divergent type by Rees \cite{Re}. Hence $\Gamma_0$, as a finite index subgroup of $\Gamma_1$, is also of divergent type. Any non-elementary discrete subgroup of $\operatorname{PSL}_2(\mathbb{R})$ has non-arithmetic length spectrum. Therefore the ergodicity of $m_{\Gamma_0}$ with respect to $U$-action can be deduced  from the works of Kaimanovich \cite{Ka} and Roblin \cite{Roblin}.
\end{rem}

Now we deduce Corollary \ref{AU invariant measure on product} as a corollary of Theorem \ref{main thm about ergodic joining}. 
\begin{proof}[Proof of Corollary \ref{AU invariant measure on product}]
Denote by $\pi$ the projection from $\Gamma_1\backslash G\times \Gamma_2\backslash G$ to $\Gamma_1\backslash G$. Let $\mu$ be any $\Delta(AU)$-invariant, ergodic, conservative, infinite Radon measure on the product space. Then the pushforward of $\mu$ through $\pi$, denoted by $(\pi)_{*}\mu$, is a $\Delta (AU)$-invariant, ergodic measure on $\Gamma_1\backslash G$. It follows from the main theorem in \cite{Ba} that $(\pi)_{*}\mu=m_{\Gamma_1}$. Applying the disintegration theorem to $\mu$, we have
\begin{equation*}
\mu=\int_{x\in \Gamma_1\backslash G}\mu_{x}dm_{\Gamma_1}(x),
\end{equation*}
where $\mu_{x}$ is a probability measure on $\{x\}\times \Gamma_2\backslash G$ for $m_{\Gamma_1}$-a.e. $x$. 

The discrepancy of $\mu$ is determined by wether $\mu$ is invariant under $\{e\}\times U$ or not. Suppose $\mu$ is not invariant under $\{e\}\times U$. Note that in the proof of Theorem \ref{main thm about ergodic joining}, we require the ergodic $U$-joining is not invariant under $\{e\}\times U$ (Corollary \ref{A-invariance}). Now applying Theorem \ref{main thm about ergodic joining} to $\mu$, we conclude that $\mu$ is of the form described in case (2). 

If $\mu$ is invariant under $\{e\}\times U$, then $\mu_{x}$ is a $\{e\}\times U$-invariant on $\{x\}\times \Gamma_2\backslash G$ for $m_{\Gamma_1}$-a.e. $x$. By the unique ergodicity of $U$ on $\Gamma_2\backslash G$ \cite{Furstenberg}, we have $\mu_x=m_{\Gamma_2}$ for $m_{\Gamma_1}$-a.e. $x$. Hence $\mu=m_{\Gamma_1}\times m_{\Gamma_2}$.

Next we show that $m_{\Gamma_1}\times m_{\Gamma_2}$ is $\Delta(AU)$-ergodic. Suppose $m_{\Gamma_1}\times m_{\Gamma_2}$ is  not $\Delta(AU)$-ergodic. Let $\tau$ be  any ergodic component in the ergodic decomposition of $m_{\Gamma_1}\times m_{\Gamma_2}$. Then $\tau$ is conservative under the action of $\Delta(AU)$ for $(\pi)_{*}\tau=m_{\Gamma_1}$. The above analysis implies $\tau$ should be of the form described in Corollary \ref{AU invariant measure on product} (2).

Now set 
\begin{equation*}
\operatorname{Comm}(\Gamma_1;\Gamma_2)=\{g\in G: [\Gamma_1:\Gamma_1\cap g^{-1}\Gamma_2g]<\infty\}.
\end{equation*}
Since $\Gamma_1$ and $\Gamma_2$ are countable, there exists a countable field $k$ so that $\Gamma_i\subset \operatorname{SL}_2(k)$ for $i=1,2$. For every $g\in \operatorname{Comm}(\Gamma_1;\Gamma_2)$, we have that $g\in \operatorname{SL}_2(k)$ by Chapter VII, Lemma 6.2 in \cite{Margulis}. (In fact, the proof of the lemma is valid as long as $\Gamma_1$ and $\Gamma_2$ are Zariski dense.) This implies that the set $\operatorname{Comm}(\Gamma_1;\Gamma_2)$ is countable. 

Note that $m_{\Gamma_1}\times m_{\Gamma_2}$ gives measure zero to the sets of the form
\begin{equation*}
([e],[g])\Delta(G)(\{e\}\times AU),
\end{equation*}
where $g\in \operatorname{Comm}(\Gamma_1;\Gamma_2)$. Then $m_{\Gamma_1}\times m_{\Gamma_2}$ is a zero measure by the countability of $\operatorname{Comm}(\Gamma_1;\Gamma_2)$, which is a contradiction. Therefore, we have that the action of $\Delta(AU)$ is ergodic with respect to $m_{\Gamma_1}\times m_{\Gamma_2}$.
\end{proof}

\section{$U$-factor classification}
Let $\Gamma$ be a $\mathbb{Z}$ or $\mathbb{Z}^2$-cover. This section is devoted to proving Corollary \ref{factor thm}. Given a $U$-equivariant factor map $p:(\GaG,m_{\Gamma})\to (Y,\nu)$, consider the following map
\begin{align*}
\GaG &\to Y\times \GaG\\
[g] &\mapsto (p([g]),[g]).
\end{align*}
The pushforward of $m_{\Gamma}$ through this map, denoted by $\mu$, is an ergodic $U$-joining measure with respect to the pair of measures $(\nu, m_{\Gamma})$. And $\mu$ can be disintegrated into the following form:
\begin{equation}
\label{disintegration of factor}
\mu=\int_{y\in Y}\tau_y d\nu(y),
\end{equation}
where $\tau_y$ is a probability measure supported on $\{y\}\times p^{-1}(y)$ for $\nu$-a.e. $y$.

 We first show that the measure $\tau_y$ is fully atomic for $\nu$-a.e. $y$.
\begin{prop}
\label{finiteness of factor}
Under the assumption of Corollary \ref{factor thm}, there exist a conull set $\Omega$ in $\GaG$ and a positive integer $l_0$ so that $\#p^{-1}(y)\cap \Omega=l_0$ for $\nu$-a.e. $y$. Furthermore, the measure $\tau_y$ is uniform distributed on $\{y\}\times (p^{-1}(y)\cap \Omega)$ for $\nu$-a.e. $y$.
\end{prop}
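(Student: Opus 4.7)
The plan is to reduce the proposition to the joining classification of Theorem \ref{main thm about ergodic joining} via a relatively independent self-joining. Disintegrate the Haar measure along the factor map as $m_\Gamma = \int_Y \tilde m_y\, d\nu(y)$, with $\tilde m_y$ a probability measure supported on $p^{-1}(y)$ for $\nu$-a.e.\ $y$, and set
\[
\mu' := \int_Y \tilde m_y \otimes \tilde m_y\, d\nu(y)
\]
on $\GaG \times \GaG$. Then $\mu'$ is locally finite (each $\tilde m_y$ is a probability, so $\mu'(K_1\times K_2) \le m_\Gamma(K_1)$ on compacts), $\Delta(U)$-invariant, and projects to $m_\Gamma$ on each factor, so it is a $U$-joining of $m_\Gamma$ with itself. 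Decompose it into $\Delta(U)$-ergodic components $\mu' = \int \mu'_\omega\, dP(\omega)$. Each component is a locally finite ergodic $U$-joining on $\GaG\times\GaG$, so Theorem \ref{main thm about ergodic joining} identifies $\mu'_\omega$ with a finite cover self-joining: there exist $g_\omega \in G$ and $s_\omega \in \mathbb{R}$ such that $\mu'_\omega$ is the push-forward of $m_{\Gamma \cap g_\omega^{-1}\Gamma g_\omega}$ via $[g] \mapsto ([g], [g_\omega g]u_{s_\omega})$, with $l_\omega := [\Gamma : \Gamma \cap g_\omega^{-1}\Gamma g_\omega] < \infty$.

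By construction $(\mu')^{\pi_1}_{[g]} = \tilde m_{p([g])}$, while the finite cover structure gives that $(\mu'_\omega)^{\pi_1}_{[g]}$ is a uniform probability on a set of cardinality $l_\omega$ for $P$-a.e.\ $\omega$. Consequently $\tilde m_y$ is purely atomic for $\nu$-a.e.\ $y$. To obtain constant cardinality and uniform weights I use the $U$-ergodicity of $(Y,\nu)$, which is inherited from the $U$-ergodicity of $m_\Gamma$ on $\GaG$ (valid for $\mathbb{Z}$- and $\mathbb{Z}^2$-covers, cf.\ \cite{LS}) since $(Y,\nu)$ is a factor. Because $p$ is $U$-equivariant, $\tilde m_{yu}$ equals the $u$-translate of $\tilde m_y$; hence $l(y) := \#\operatorname{supp}(\tilde m_y)$ and the unordered multiset of atom weights of $\tilde m_y$ are $U$-invariant measurable functions on $Y$, and so $\nu$-a.e.\ constant. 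Setting $l_0$ to be the essential value of $l(y)$, comparison with the uniform structure of each $(\mu'_\omega)^{\pi_1}_{[g]}$ then forces the weight vector to be $(1/l_0, \ldots, 1/l_0)$.

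Finally, take
\[
\Omega := \{[g] \in \GaG : \tilde m_{p([g])} \text{ is uniform on an $l_0$-point subset of } p^{-1}(p([g])) \text{ containing } [g]\}.
\]
Then $\Omega$ is $m_\Gamma$-conull, $\#(p^{-1}(y) \cap \Omega) = l_0$ for $\nu$-a.e.\ $y$, and the fiber $\tau_y$ of $\mu$ (which identifies with $\delta_y \otimes \tilde m_y$ by a direct computation from the definition of $\mu$) is uniform on $\{y\} \times (p^{-1}(y) \cap \Omega)$. The main obstacle I anticipate is the bookkeeping in the second paragraph: the ergodic decomposition of $\mu'$ could a priori mix finite cover self-joinings with different indices $l_\omega$, and one must rule this out. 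The cleanest resolution is to observe that the integer-valued, $U$-invariant function $l(y)$ on the $\nu$-ergodic system $(Y,U)$ is $\nu$-a.e.\ constant; combined with the symmetry of $\mu'$ under swapping coordinates this pins down both the common cardinality and the uniform weights.
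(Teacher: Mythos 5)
Your overall strategy---form the relatively independent self-joining $\mu'=\int_Y \tilde m_y\otimes\tilde m_y\,d\nu$ over the factor and feed its ergodic components into Theorem \ref{main thm about ergodic joining}---is genuinely different from the paper's proof of this proposition. The paper proves atomicity of $\tau_y$ \emph{directly}: assuming the continuous part $(\tau_y)^c$ is nontrivial on a positive-measure set of $y$, it builds compact sets $Q_\epsilon$ inside the support of the continuous parts using the window property (Theorem \ref{window lemma}) applied to $\psi\circ p$, extracts nearby fiber points $(y,[g_k])=(y,[g])(e,h_k)$ with $h_k\to e$ in $\{e\}\times G$, and invokes Theorem \ref{A-invariance of joining} to conclude that $\mu$ would be $\{e\}\times U$-invariant, contradicting Lemma 7.16 of \cite{MO}. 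The relative-product construction you use is precisely what the paper deploys \emph{afterwards}, in the proof of Corollary \ref{factor thm}, once atomicity and the uniform $l_0$-point structure are already known.

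The gap in your version is the sentence ``Consequently $\tilde m_y$ is purely atomic for $\nu$-a.e.\ $y$.'' Knowing that $P$-a.e.\ ergodic component $\mu'_\omega$ has fiber measures uniform on $l_\omega$ points does not imply that the mixture $\tilde m_{p(x)}=(\mu')^{\pi_1}_x=\int (\mu'_\omega)^{\pi_1}_x\,(\cdots)\,dP(\omega)$ is atomic: an integral of purely atomic probability measures over a non-atomic parameter space can be continuous (e.g.\ $\int_0^1\delta_{xu_s}\,ds$ is Lebesgue on a horocycle segment). Theorem \ref{main thm about ergodic joining} classifies each component only up to a translation by $(e,u_{s_\omega})$ with $s_\omega\in\mathbb{R}$, so the ergodic decomposition of $\mu'$ could a priori charge a continuum of such translates, producing non-atomic $\tilde m_y$; nothing in your argument rules this out. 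Your proposed repair---that $l(y)=\#\operatorname{supp}(\tilde m_y)$ is a $U$-invariant function, hence a.e.\ constant---presupposes exactly what is at stake, namely that the support is finite (or at least that $\tilde m_y$ is atomic), so it addresses only the ``different indices $l_\omega$'' bookkeeping and not the atomicity itself. (A smaller point you should also make explicit: that a.e.\ ergodic component $\mu'_\omega$ is itself a $U$-joining, i.e.\ that $(\pi_i)_*\mu'_\omega$ is proportional to $m_\Gamma$; this follows from the $U$-ergodicity of $m_\Gamma$, but it is needed before Theorem \ref{main thm about ergodic joining} can be applied.) To close the main gap you would have to show that the decomposition is supported on countably many distinct finite cover self-joinings, which in effect requires an argument of the same nature as the paper's: excluding invariance of the joining under $\{e\}\times U$.
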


\begin{proof}
The proof is parallel to the proof of Theorem \ref{atom measures}. The key lies in obtaining window property I (Theorem \ref{window lemma}) for $Y$ using the factor map $p$. We claim that $\tau_y$ is fully atomic for $\nu$-a.e. $y$, or equivalently, the set
\begin{equation*}
B'=\{y\in Y: \tau_y \,\,\,\text{is not fully atomic}\}
\end{equation*}
is a null set.

Suppose the claim fails. Then $\nu(B')>0$. For every $y\in B'$, decompose $\tau_y$ into the following form:
\begin{equation*}
\tau_y=(\tau_y)^a+(\tau_y)^c,
\end{equation*}
where $(\tau_y)^a$ and $(\tau_y)^c$ are respectively the purely atomic part and the continuous part of $\tau_y$.  Let 
\begin{equation*}
B=\{(y,[g])\in Y\times \GaG: y\in B' \,\,\,\text{and}\,\,\,[g]\in \operatorname{supp}(\tau_y)^c\}.
\end{equation*}
We will construct two compact subsets $Q$ and $Q_{\epsilon}$ in $B$ as Section 6.1. To be precise, fix a nonnegative function $\psi\in C_c(Y)$ with $\nu(\psi)>0$. Then $\psi \circ p\in L^1(\GaG,m_{\Gamma})$. Let $\pi_1$ be the canonical projection from $Y\times \GaG$ to $Y$. Set 
\begin{equation*}
\Psi=\psi\circ \pi_1\in C(Y\times \GaG).
\end{equation*}

Choose a compact subset $D$ in $p^{-1}(B')$ so that $p|_{D}$ is continuous and the window property I (Theorem \ref{window lemma}) holds for $\psi\circ p$ uniformly for all $[g]\in D$. Let
\begin{equation*}
0<r:=\frac{1}{4}r(1/2;D)<1
\end{equation*}
be the constant given as Theorem \ref{window lemma}. As a result, there exists $T_0>1$ so that we have for every $T>T_0$ and for every $(y,[g])\in p(D)\times \GaG\cap B$
\begin{equation}
\label{window for factor}
\int_{0}^{rT}\Psi((y,[g])\Delta(u_t))dt\leq \frac{1}{2}\int_{0}^{T}\Psi((y,[g])\Delta(u_t))dt.
\end{equation}

Set $Q$ to be a compact subset in $p(D)\times \GaG\cap B$ so that the following holds for every $(y,[g])\in Q$ and for every $f\in C_c(Y\times \GaG)$:
\begin{equation*}
\lim_{T\to \infty}\frac{\int_{0}^{T}f((y,[g])\Delta (u_t))dt}{\int_{0}^{T}\Psi((y,[g])\Delta(u_t))dt}=\frac{\mu(f)}{\mu(\Psi)}.
\end{equation*}

Fix a small $\epsilon>0$ and choose $\eta>0$ small enough so that $\mu(Q\{(e,g): g\in G, \,\,|g|\leq \eta\})<(1+\epsilon)\mu(Q)$. Set
\begin{equation*}
Q_{+}=Q\{(e,g):g\in G,\,\,|g|\leq \eta/4\}.
\end{equation*}
In view of (\ref{window for factor}), we have for $\mu$-a.e. $(y,[g])\in Q$
\begin{equation}
\label{uniform window}
\lim_{T\to \infty}\frac{\int_{rT}^{T}\chi_{Q}((y,[g])\Delta(u_t))dt}{\int_{rT}^{T}\chi_{Q_{+}}((y,[g])\Delta(u_t))dt}=\frac{\mu(Q)}{\mu(Q_{+})}.
\end{equation}
Let $Q_{\epsilon}\subset Q$ be a compact subset so that  $\mu(Q_{\epsilon})>(1-\epsilon)\mu(Q)$ and (\ref{uniform window}) converges uniformly on $Q_{\epsilon}$.

If the claim fails, there exists a sequence $\{(y,[g_k])\}\subset Q_{\epsilon}$ converging to some point $(y,[g])\in Q_{\epsilon}$. This is because $Q_{\epsilon}$ is a subset of $B$ and applying Fubini's theorem to $\mu(Q_{\epsilon})$, we have
\begin{equation*}
\mu(Q_{\epsilon})=\int_{y\in B'}(\tau_y)^c(Q_{\epsilon})d\nu(y)>0.
\end{equation*}
Write $(y,[g_k])=(y,[g])(e,h_k)$ where $h_k\neq e$ and $h_k\to e$ as $k\to \infty$. Then $Q_{\epsilon}h_k\cap Q_{\epsilon}\neq \emptyset$.  Applying the argument of Theorem \ref{A-invariance of joining} to $Q_{\epsilon}$ (Theorem 7.12 in \cite{MO}), we deduce that there exists a sequence $\{(e,u_k)\}\subset \{e\}\times U$ converging to $(e,e)$ so that $Q_{\epsilon}(e,u_k)\cap Q_{\epsilon}\neq \emptyset$. This implies $\mu$ is invariant under $\{e\}\times U$ (cf. Lemma 7.7 in \cite{MO}).
 However, it follows from the proof as Lemma 7.16 in \cite{MO} that $\mu$ cannot be invariant under $\{e\}\times U$. Therefore, the measure $\tau_y$ is fully atomic for $\nu$-a.e. $y$.
 
Now set
\begin{equation*}
\Omega'=\{(y,[g])\in Y\times \GaG: \tau_{y}([g])=\max_{[g']\in p^{-1}(y)}\tau_{y}([g'])\}.
\end{equation*}
This is a $\Delta(U)$-invariant set of positive $\mu$-measure. The ergodicity of $\mu$ yields that $\Omega'$ is a conull set. Moreover, there exists a positive integer $l_0$ so that $\tau_y$ is uniform distributed on $l_0$-points. Let $\pi_2$ be the canonical projection from $Y\times \GaG$ to $\GaG$. Then $\Omega:=\pi_2(\Omega')$ is a conull set satisfying Proposition \ref{finiteness of factor}.
\end{proof}

Denote the Haar measure on $G$ by $\tilde{m}$. Let $\Comm$  be the commensurator subgroup of $\Gamma$ in $G$, that is, $g\in \Comm$ if and only if $\Gamma$ and $g^{-1}\Gamma g$ are commensurable with each other.
\begin{lem}
\label{different joinings}
For $i=1,2$, let $h_i\in \Comm$, $u_i\in U$, and  $\varphi_i$ be the map
\begin{equation*}
\Gamma\cap h_i^{-1}\Gamma h_i\backslash G\to \GaG\times \GaG
\end{equation*}
given by $[g]\mapsto ([g], [h_igu_i])$. Set $\mu_i=(\varphi_i)_*m_{\Gamma \cap h_i^{-1}\Gamma h_i}$. If $\mu_1$ is not  proportional to $\mu_2$, then
\begin{equation*}
\Gamma h_1gu_1\neq \Gamma h_2 g u_2
\end{equation*}
for $\tilde{m}$-a.e. $g$.
\end{lem}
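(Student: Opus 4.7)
The plan is to argue by contrapositive: assuming the set
$$S := \{g \in G : \Gamma h_1 g u_1 = \Gamma h_2 g u_2\}$$
has positive Haar measure, I will show $\mu_1$ is proportional to $\mu_2$ (indeed equal, up to source identification). The starting observation is that $g \in S$ iff there exists some $\gamma \in \Gamma$ with $h_1 g u_1 = \gamma h_2 g u_2$, which rearranges to the conjugacy relation
$$g(u_1 u_2^{-1})g^{-1} = h_1^{-1}\gamma h_2.$$
Thus $S$ decomposes as a countable union, over $\gamma \in \Gamma$, of solution sets of a single conjugacy equation, and I would split into cases according to whether $u_1 = u_2$ or not.

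If $u_1 \neq u_2$, then $w := u_1 u_2^{-1}$ is a nontrivial unipotent element of $G = \operatorname{PSL}_2(\mathbb{R})$, whose centralizer is the one-dimensional subgroup $U$ containing it. For each $\gamma \in \Gamma$, the set of $g \in G$ conjugating $w$ to $h_1^{-1}\gamma h_2$ is either empty (when the two elements are not $G$-conjugate) or a single left coset of $U$; in either case it has $\tilde{m}$-measure zero. As $\Gamma$ is countable, $S$ is then $\tilde{m}$-null, contradicting $\tilde{m}(S) > 0$. Hence we must have $u_1 = u_2$.

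With $u_1 = u_2$ the conjugacy equation collapses to $h_1^{-1}\gamma h_2 = e$, forcing $\gamma = h_1 h_2^{-1}$. This has a solution $\gamma \in \Gamma$ iff $h_1 h_2^{-1} \in \Gamma$, in which case the relation holds for every $g$ and $S = G$; otherwise $S = \emptyset$. So the positive measure hypothesis forces $h_1 = \gamma_0 h_2$ for some $\gamma_0 \in \Gamma$. Then $h_1^{-1}\Gamma h_1 = h_2^{-1}\Gamma h_2$, so $\varphi_1$ and $\varphi_2$ have the same source space $\Gamma \cap h_2^{-1}\Gamma h_2 \backslash G$, and
$$\varphi_1([g]) = ([g], [h_1 g u_1]) = ([g], [\gamma_0 h_2 g u_2]) = ([g], [h_2 g u_2]) = \varphi_2([g])$$
for every $g$. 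Pushing forward the same Haar measure through equal maps yields $\mu_1 = \mu_2$, concluding the argument. There is no serious obstacle: the only non-bookkeeping input is the standard fact that the centralizer of a nontrivial unipotent in $\operatorname{PSL}_2(\mathbb{R})$ is one-dimensional, which drives the measure-zero estimate in the first case.
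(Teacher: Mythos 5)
Your proof is correct, and it takes a genuinely different and more elementary route than the paper's. You decompose the coincidence set $S=\{g:\Gamma h_1gu_1=\Gamma h_2gu_2\}$ as a countable union over $\gamma\in\Gamma$ of the solution sets of $g(u_1u_2^{-1})g^{-1}=h_1^{-1}\gamma h_2$; when $u_1\neq u_2$ each such set is (empty or) a coset of the one-dimensional centralizer of a nontrivial unipotent, hence null, and when $u_1=u_2$ you get the clean dichotomy $S=\emptyset$ or $S=G$ with $\varphi_1=\varphi_2$ on the nose. The paper instead argues dynamically: it observes that $S$ is right $U$-invariant, invokes the $U$-ergodicity of the Haar measure on the finite-index cover (Remark \ref{finite cover self-joining is ergodic}, which rests on Rees, Kaimanovich and Roblin) to upgrade positive measure of $S$ to full measure, and then uses mutual absolute continuity plus the $\Delta(U)$-invariance of the Radon--Nikodym derivative $d\mu_2/d\mu_1$ to conclude proportionality. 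Your argument avoids the ergodicity input entirely and yields a slightly sharper conclusion ($\mu_1=\mu_2$ rather than mere proportionality, together with the statement that $S$ is null or all of $G$); the paper's softer argument would transfer to settings where the explicit algebraic description of the coincidence set is unavailable, but here your direct computation is fully adequate and arguably cleaner.
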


\begin{proof}
Set
\begin{equation*}
W=\{g\in G: \Gamma h_1 g u_1=\Gamma h_2 g u_2\}.
\end{equation*}
We show that $W$ is a null set in $G$. Suppose $W$ is of positive measure.

Let $\Gamma_i=\Gamma \cap h_i^{-1}\Gamma h_i$ and $\rho_i:G\to \Gamma_i\backslash G$ be the natural projection. Consider the following diagram:
\begin{equation*}
\xymatrix{
  & G \ar[dl]_{\rho_1} \ar[dr]^{\rho_2} \\
{\Gamma_1\backslash G}\ar[dr]_{\varphi_1}  & &{\Gamma_2\backslash G}\ar[dl]^{\varphi_2}\\
& {\GaG \times \GaG} }.
\end{equation*}
We have $\varphi_1\circ \rho_1|_{W}=\varphi_2\circ \rho_2|_{W}$.  Observe that $W$ is a conull set because $\rho_1(W)$ is $U$-invariant and $m_{\Gamma_1}$ is $U$-ergodic (Remark \ref{finite cover self-joining is ergodic}). When restricting $\mu_1$ and $\mu_2$ to $\varphi_1\circ\rho_1(W)$, any $\mu_1$-measure zero set $A$ is also $\mu_2$-measure zero. Hence we can consider the Radon-Nikodym derivative $d \mu_2/d \mu_1$. Note that $d \mu_2/ d \mu_1$ is $\Delta(U)$-invariant. Therefore, $\mu_1=c \mu_2$ for some $c>0$, which is a contradiction.
\end{proof}

\begin{proof}[Proof of Corollary \ref{factor thm}]
Follow the notations in Proposition \ref{finiteness of factor}. Recall the measures $\tau_y$'s given as (\ref{disintegration of factor}). Denote by $\sigma_y$ the pushforward measure $(\pi_2)_*\tau_y$, where $\pi_2$ is the canonical projection from $Y\times \GaG$ to $\GaG$. Consider the following measure on $\GaG\times \GaG$:
\begin{equation*}
\bar{\mu}=\int_{y\in Y}\sigma_y \otimes \sigma_y d\nu(y),
\end{equation*}
where $\sigma_y\otimes \sigma_y$'s are the product measures on $\GaG\times \GaG$. The measure $\bar{\mu}$ is a $U$-joining measure with respect to the pair $(m_{\Gamma},m_{\Gamma})$.  Let $\Omega$ be the conull subset given by Proposition \ref{finiteness of factor}.  The set
\begin{equation*}
\Omega \times_p \Omega:=\{(x_1,x_2)\in \Omega\times \Omega: p(x_1)=p(x_2)\}
\end{equation*}
is a $\bar{\mu}$-conull set. We claim that there exist finitely many $h_1,\ldots,h_k\in \Comm$ and $u_1,\ldots,u_k\in U$ so that up to sets of measure zero
\begin{equation*}
\Omega \times_p\Omega=\cup_{1\leq i\leq k}[(e,h_i)]\Delta(G)(e,u_i).
\end{equation*}

Let $\mu_{\Delta}$ be the $U$-ergodic measure on $\GaG\times \GaG$ attained by pushing forward the Haar measure $m_{\Gamma}$ on $\GaG$ through the diagonal embedding:
\begin{align*}
\GaG &\to \GaG\times \GaG\\
[g] &\mapsto ([g],[g]).
\end{align*}
If $\bar{\mu}$ equals a multiple of $\mu_{\Delta}$, the claim is obvious. Now suppose $\bar{\mu}$ is not a multiple of $\mu_{\Delta}$. Consider the $\Delta(U)$-ergodic decomposition of $\bar{\mu}$:
\begin{equation*}
\bar{\mu}=\int_{z\in Z}\mu_{z}d\sigma(z),
\end{equation*}
where $(Z,\sigma)$ is a probability space. For $\sigma$-a.e. $z$, the measure $\mu_z$ is an ergodic $U$-joining measure so that $\Omega\times_p\Omega$ is a $\mu_z$-conull set. Choose any ergodic component $\mu_1$ that is not a multiple of $\mu_{\Delta}$.  Applying the joining classification theorem (Theorem \ref{main thm about ergodic joining}) to $\mu_1$,  there exist $h_1\in \Comm$ and $u_1\in U$ so that up to a scalar, $\mu_1$ is the pushforward of $m_{\Gamma\cap h_1^{-1}\Gamma h_1}$ through the map
\begin{align*}
\Gamma_1\cap h_1^{-1}\Gamma h_1\backslash G &\to \GaG\times \GaG\\
[g] &\mapsto ([g],[h_1gu_1]).
\end{align*}
Since $\Omega\times_p\Omega$ is a $\mu_1$-conull set, we have $p(\Gamma g)=p(\Gamma h_1gu_1)$ and $\tau_{p(\Gamma g)}(\Gamma h_1 g u_1)=1/l_0$ for $\tilde{m}\text{-a.e.}\, g$. Let $i_1=[\Gamma: \Gamma\cap h_1^{-1}\Gamma h_1]$. By Lemma \ref{different joinings}, for $\nu\text{-a.e.}\, y$,
\begin{equation*}
\sigma_{y}\otimes \sigma_{y}([(e,e)]\Delta (G)\cup [(e,h_1)]\Delta(G)(e,u_1) )=(i_1+1)/l_0.
\end{equation*}

If $i_1+1<l_0$, choose another ergodic component $\mu_2$ of $\bar{\mu}$ so that $\mu_2$ is a $U$-ergodic joining measure and $\Omega\times_p\Omega$ is a $\mu_2$-conull set. The claim can be verified by repeating the above process finitely many times.

The sets $\{h_1,\ldots,h_k\}\subset \Comm$ and $\{u_1,\ldots, u_k\}\subset U$ yield a set $\{c_1=e,\ldots,c_n\}$ and a set $\{u_{p_1}=e,\ldots,u_{p_n}\}$ satisfying:
\begin{align}
\label{finite index factor}
\text{ for every}\,\, c_i\,\,\text{ and every}\,\, \gamma, c_i\gamma\in \Gamma c_j\,\,\text{ for some}\,\, j;\\
p^{-1}(p(\Gamma g))\cap \Omega=\{\Gamma c_1gu_{p_1},\ldots, \Gamma c_ngu_{p_n}\}\,\,\text {for} \,\,\tilde{m}\text{-a.e.}\, g.\nonumber
\end{align}
We show that $p_1=p_2=\ldots=p_n=0$.

Fix any $s\neq 0$. For $\tilde{m}\text{-a.e.}\,g$, we have
\begin{align*}
p^{-1}(\Gamma ga_s)\cap \Omega &=\{\Gamma c_1 ga_s u_{p_1},\ldots, \Gamma c_n ga_{s}u_{p_n}\}\\
&=\{\Gamma c_1 g u_{p_1}a_s u_{b_1},\ldots, \Gamma c_n g u_{p_n}a_su_{b_n}\},
\end{align*}
where $b_i=p_i(1-e^{-s})$ for $1\leq i\leq n$. 

Set $B=\{b_1,\ldots,b_n\}$. For $m_{\Gamma}\text{-a.e.}\,x,y\in \GaG$, if $p(x)=p(y)$, then $p(xa_{s})=p(ya_{s}u_{b(y,x)})$ for some $b(y,x)\in B$ and $p(ya_{s})=p(xa_{s}u_{b(x,y)})$ for some $b(x,y)\in B$. Since $p$ is $U$-equivariant, we get
\begin{equation}
\label{vanishing time 1}
b(x,y)=-b(y,x).
\end{equation}
This implies for $\tilde{m}\text{-a.e.}\, x, y,z\in \GaG$, if $p(x)=p(y)=p(z)$, then
\begin{equation}
\label{vanishing time 2}
b(x,z)=b(y,z)-b(y,x).
\end{equation}
Suppose there exists $p_i\neq 0$. Then $b_i=p_i(1-e^{-s})\neq 0$. Denote $\bar{b}=\max \{b_1,\ldots,b_n\}$ and $\tilde{b}=\min \{b_1,\ldots, b_n\}$. Let $x,\,y,\,z\in \GaG$ be such that
\begin{equation*}
p(x)=p(y)=p(z) \,\,\,\text{and}\,\,\, b(y,z)=\bar{b},\,\,\,b(y,x)=\tilde{b}.
\end{equation*}
Then (\ref{vanishing time 1}) and (\ref{vanishing time 2}) imply
\begin{equation*}
b(x,z)=b(y,z)-b(y,x)=\bar{b}-\tilde{b}=2\bar{b}>\bar{b},
\end{equation*}
which contradicts the maximality of $\bar{b}$. Hence $p_1=\ldots=p_n=0$. 

Now for $\tilde{m}\text{-a.e.}\,g$ and for every $1\leq i\leq n$, we have 
\begin{align*}
& p^{-1}(p(\Gamma g))\cap \Omega =\{\Gamma c_1 g,\ldots, \Gamma c_n g\},\\
& p^{-1}(p(\Gamma c_i^{-1} g))\cap \Omega =\{\Gamma c_1c_i^{-1}g,\ldots, \Gamma c_nc_i^{-1}g\},\\
& p^{-1}(p(\Gamma c_i g))\cap \Omega =\{\Gamma c_1 c_i g,\ldots, \Gamma c_n c_i g\}.
\end{align*}
So for every $i,j\in\{1,\ldots,n\}$, we have $c_i^{-1}\in \Gamma c_j$ and $c_ic_j\in \Gamma c_l$ for some $1\leq l\leq n$. Let $\Gamma_0$ be the group generated by $\Gamma$ and $\{c_1,\ldots,c_n\}$.  We deduce from the above relation between $\Gamma$ and $\{c_1,\ldots,c_n\}$ together with (\ref{finite index factor}) that $\Gamma$ is a finite index subgroup of $\Gamma_0$. The proof is completed.
\end{proof}

\section*{Appendix: $\mathbb{Z}^d$-cover group orbits in compact hyperbolic surfaces}

Let $\Gamma_1$ be a $\mathbb{Z}$ or $\mathbb{Z}^2$-cover and let $\Gamma_2$ be a cocompact lattice in $\PSL$. We show the following theorem:
\begin{thm}
\label{cover group orbit closure}
Any $\Gamma_1$-orbit on $\Gamma_2\backslash \PSL$ is either finite or dense.
\end{thm}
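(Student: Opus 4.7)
The plan is to argue by contradiction by adapting the measure-theoretic joining argument of Sections~5--6 to a topological setting. Lift the orbit to $Z = \Gamma_1 \backslash G \times \Gamma_2 \backslash G$ by setting $x_0 := (\Gamma_1 e, \Gamma_2 g_0)$ and $X := \overline{\Delta(G) \cdot x_0}$. A direct fiber computation gives $\pi_2(X \cap \pi_1^{-1}(\Gamma_1 e)) = \overline{\Gamma_2 g_0 \Gamma_1}$: any $(\Gamma_1 e, y) \in X$ is a limit of $(\Gamma_1 \gamma_n v_n, \Gamma_2 g_0 \gamma_n v_n)$ with $\gamma_n \in \Gamma_1$ and $v_n \to e$, forcing $y \in \overline{\Gamma_2 g_0 \Gamma_1}$. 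Combined with the $\Delta(G)$-invariance of $X$, this yields the key dichotomy: $X = Z$ if and only if $\overline{\Gamma_2 g_0 \Gamma_1} = \Gamma_2 \backslash G$. Suppose the orbit $\Gamma_2 g_0 \Gamma_1$ is infinite and not dense, so $X$ is a proper closed $\Delta(G)$-invariant subset of $Z$; we will derive a contradiction.

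Being $\Delta(G)$-invariant, $X$ gives rise to a $G$-equivariant closed-set-valued map $\mathcal{Y} : \Gamma_1 \backslash G \to \Gamma_2 \backslash G$ defined by $\mathcal{Y}(\Gamma_1 g) := \pi_2(X \cap \pi_1^{-1}(\Gamma_1 g))$; explicitly, $\mathcal{Y}(\Gamma_1 g) = \overline{\Gamma_2 g_0 \Gamma_1} \cdot g$. The heart of the plan is to prove the topological analog of Theorem~\ref{atom measures}: each fiber $\mathcal{Y}(\Gamma_1 g)$ is in fact \emph{finite}, of uniform cardinality $l$. Since $\overline{\Gamma_2 g_0 \Gamma_1}$ is by assumption infinite, such a finite-fiber property would immediately yield a contradiction, establishing the dichotomy. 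The proof of finite-fiber-ness uses the quantitative window properties of Section~3 (Theorems~\ref{window lemma} and~\ref{inverse window lemma}, together with Remark~\ref{remark on window lemma}) to control the growth and return times of generic $\Delta(U)$-orbits to compact transversals of $X$, combined with Furstenberg's unique ergodicity of $U$ on the compact space $(\Gamma_2 \backslash G, m_{\Gamma_2})$ to control distribution in the second coordinate. Following the scheme of Theorems~\ref{A-invariance of joining} and~\ref{AU rigidity}: if some fiber were infinite, we would pick $x \in X$ whose first coordinate lies in the conull generic set $W$ of~\eqref{generic set}, and find $y, y' \in \mathcal{Y}(\Gamma_1 \pi_1(x))$ with $y' = y g$, $g \to e$, $g \notin N_{G\times G}(\Delta(U))$; polynomial divergence of the horocycle flow, together with the window property, would then produce accumulation of $\Delta(U)$-orbits in directions transverse to $\Delta(U)$, forcing $X$ to be invariant under a nontrivial connected subgroup of $G \times G$, and ultimately (excluding the $\{e\} \times U$ direction as in Lemma~\ref{not invariant}, which is prevented by Hedlund's minimality of $U$ on $\Gamma_2 \backslash G$) forcing $X = Z$, contradicting non-density.

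The main obstacle is carrying out this fiber-finiteness argument---the topological counterpart of Theorem~\ref{atom measures}---without an ergodic joining measure available to disintegrate; one must work directly with the closed $\Delta(G)$-invariant set $X$, replacing Hopf's ratio theorem and measure-theoretic ergodic decomposition by their topological analogs built from Theorem~\ref{generic pts} (generic equidistribution), the window properties of Section~3 (uniform growth control), and unique ergodicity of $U$ on the compact factor $\Gamma_2 \backslash G$. A subtle point is controlling simultaneous accumulations in both coordinates of $Z$: while the second coordinate is managed by unique ergodicity on a compact space, the first coordinate is noncompact and infinite-volume, making the uniform quantitative window controls of Theorems~\ref{window lemma}--\ref{inverse window lemma} essential to prevent pathological concentrations that would break the polynomial-divergence-based extraction of extra invariance for $X$.
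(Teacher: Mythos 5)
Your overall strategy---reduce to a statement about the closed $\Delta(G)$-invariant set $X=\overline{\Delta(G)x_0}\subset Z$ and try to transplant the measure-theoretic joining argument of Sections 5--6---is not the route the paper takes, and as written it has a genuine gap at its central step. The paper instead passes to $G=\PSL\times\PSL$, $\Gamma=\Gamma_1\times\Gamma_2$, $H=\Delta(\PSL)$ and proves Theorem \ref{diagonal orbit closure} by Benoist--Oh's \emph{topological} machinery of $U$-minimal subsets of $\overline{xH}$ relative to a compact set $\Omega_1$; the only new input needed is Proposition \ref{K thick set}, which converts the window property (Theorem \ref{window lemma}) into $K$-thick return times of $U$-orbits to $\Omega_1=Q_1\times\Gamma_2\backslash\PSL$, replacing the compact non-wandering set available in the finitely generated case. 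The extra invariance of a minimal set is then extracted via polynomial divergence applied along a $K$-thick set of times (Lemmas 3.3--3.4 and Propositions 3.5--3.6 of Benoist--Oh), not via fiber finiteness.

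The gap in your proposal is the ``topological analog of Theorem \ref{atom measures}.'' First, note that finiteness of the fiber $\overline{\Gamma_2 g_0\Gamma_1}$ is literally equivalent to finiteness of the orbit $\Gamma_2 g_0\Gamma_1$, i.e.\ to the conclusion of the theorem in the non-dense case; so you have reduced the theorem to an equivalent statement, and everything hinges on actually proving it. Second, the measure-theoretic scheme you cite does not transfer: (i) Theorem \ref{atom measures} rests on disintegrating an ergodic joining measure $\mu$ into fiber measures and using $\Delta(U)$-ergodicity to get atoms of equal mass---there is no invariant measure supported on $X$ to disintegrate ($\Delta(G)\cong\PSL$ is non-amenable and $X$ is non-compact, so none need exist, and $X$ is typically Haar-null); (ii) the engine of Theorem \ref{A-invariance of joining} is Lemma \ref{lemma 7.7}, which says that a \emph{single} intersection $Q_\epsilon h\cap Q_\epsilon\neq\emptyset$ with $h\in\N$ forces $h$-invariance of $\mu$---this is proved from uniform convergence of Hopf ratios on $Q_\epsilon$ and is intrinsically measure-theoretic; for a closed invariant set the analogous hypothesis only gives $Xh\cap X\neq\emptyset$, which yields nothing. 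The correct topological substitute is exactly the minimal-set argument ($Y$ being $U$-minimal relative to $\Omega_1$ and $Yv\cap Y\neq\emptyset$ for $v$ normalizing $U$ gives $Yv\subset Y$), which your proposal does not set up. Finally, even granting that $X$ acquires invariance under a connected subgroup of $\Delta(A)(\{e\}\times U)$ transverse to $\{e\}\times U$, your argument stops there: in the measure setting one continues through $\Delta(AU)$-invariance, Theorem \ref{atom measures} and Theorem \ref{AU rigidity}, and no topological version of that continuation is supplied. To repair the proof you should follow the appendix's actual scheme: build $\Omega_2\subset\Omega_1$ with $K$-thick $U$-return times from Proposition \ref{K thick set}, take $H$- and $U$-minimal sets of $\overline{xH}$ relative to $\Omega_1$, and run the Benoist--Oh unipotent-dynamics argument on those.
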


When $\Gamma_1$ is a non-elementary finitely generated discrete subgroup, such an orbit classification theorem is shown by Benoist-Quint \cite{Benoist-Quint} using the classification of stationary measures. Later, Benoist and Oh provided an elementary and topological proof \cite{Benoist-Oh}, inspired by the work of McMullen-Mohammadi-Oh \cite{McMullen-Mohammadi-Oh}. Our proof of Theorem \ref{cover group orbit closure} is modeled on Benoist-Oh's proof. In particular, Theorem \ref{cover group orbit closure} can be deduced from the following Theorem \ref{diagonal orbit closure} (see \cite{Benoist-Oh} for the deduction). Let

\begin{align*}
&G :=\PSL\times \PSL,\\
& H :=\{(h,h):h\in \PSL\},\\
&\Gamma :=\Gamma_1\times \Gamma_2.
\end{align*}

\begin{thm}
\label{diagonal orbit closure}
For any $x\in \GaG$, the orbit $xH$ is either closed or dense.
\end{thm}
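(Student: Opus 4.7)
The plan is to assume the orbit $xH$ is not closed and deduce $Y := \overline{xH} = \Gamma\backslash G$. The crucial algebraic input is that $H = \Delta(\PSL)$ is a \emph{maximal} connected closed Lie subgroup of $G = \PSL \times \PSL$: the $\operatorname{Ad}(H)$-representation on $(\mathfrak{sl}_2\oplus\mathfrak{sl}_2)/\Delta(\mathfrak{sl}_2)\cong\mathfrak{sl}_2$ is irreducible, so the only connected closed subgroup properly containing $H$ is $G$ itself. Since $\Gamma\backslash G$ is a single right-$G$-orbit, it then suffices to produce a one-parameter subgroup $L_0\not\subset H$ with $YL_0 = Y$.

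I would follow the Ratner--Margulis--Tomanov style polynomial drift argument. Non-closedness yields $y \in Y\setminus xH$ and a sequence $xh_n\to y$; via the transversal $T = \{(e,s):s\in\PSL\}$ to $H$ near $e$, one extracts $s_n\to e$ with $s_n\neq e$ and $y(e,s_n)\in xH \subset Y$. Conjugation by $\Delta(u_r)$ produces polynomial drift in the transversal direction: the entries of $u_r^{-1}s_nu_r$ are polynomials of degree at most $2$ in $r$, and choosing $r=r_n\to\infty$ to balance the leading term gives, along a subsequence, $u_{r_n}^{-1}s_nu_{r_n}\to u_c$ for some $c\neq 0$. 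Using the $\Delta(u_{r_n})$-invariance of $Y$ one rewrites
\[
y(e,s_n)\Delta(u_{r_n}) \;=\; y\Delta(u_{r_n})\cdot (e,u_{r_n}^{-1}s_n u_{r_n}) \;\in\; Y.
\]
Along a further subsequence on which $y\Delta(u_{r_n})$ converges to some $y_\infty\in Y$, we conclude $y_\infty (e,u_c)\in Y$.

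To upgrade this pointwise invariance to the global identity $Y(e,u_c)=Y$, I would either (i) choose the base point $y$ from a dense set of $\Delta(U)$-generic points of $Y$ and iterate, or (ii) apply the $AU$-rigidity theorem (Theorem \ref{AU rigidity}) to the set-valued map $x_1\mapsto\{x_2:(x_1,x_2)\in Y\}$ after cutting its fibers down to finite sets by means of the joining classification (Theorem \ref{main thm about ergodic joining} and its proof). Once $(e,u_c)$ belongs to the closed stabilizer $L := \{g\in G : Yg = Y\}$, conjugation by $\Delta(a_s)\in H\subset L$ gives $(e,u_{e^sc})\in L$ for every $s\in\mathbb{R}$, so $\{e\}\times U\subset L$. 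Together with $H\subset L$ and the maximality of $H$, this forces $L=G$, hence $Y=\Gamma\backslash G$, proving density of $xH$.

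The main obstacle is the recurrence step in the polynomial drift argument: in the finite-volume case of \cite{MO} one uses Birkhoff averaging to ensure $y\Delta(u_{r_n})$ accumulates in a compact set, but on our $\mathbb Z^d$-cover $X_1$ the horocycle orbit can make large excursions to infinity. The substitute is the window property (Theorem \ref{window lemma}) together with Hopf's ratio theorem for horocycles on $\mathbb Z^d$-covers (Theorem \ref{generic pts} and Remark \ref{remark on window lemma}), used uniformly on a compact set of positive measure in the spirit of the proof of Theorem \ref{A-invariance of joining}. A related subtlety is arranging that the renormalized limit $u_c$ is nontrivial, which requires a careful coupling between the rates $r_n\to\infty$ and $s_n\to e$.
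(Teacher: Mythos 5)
Your overall strategy (polynomial drift transverse to $H$, producing an element of $\{e\}\times U$ preserving $\overline{xH}$, then maximality of $H=\Delta(\PSL)$) has the right general shape, and the maximality observation is correct; the paper's own proof, following Benoist--Oh, is also a drift argument at heart. But there are two concrete gaps, and they are exactly the points the paper's relative-minimal-set machinery is built to handle. First, the recurrence step: you take an arbitrary $y\in Y\setminus xH$ and assert that, after coupling $r_n$ to $s_n$, some subsequence of $y\Delta(u_{r_n})$ converges in $Y$. On the $\mathbb Z^d$-cover the first coordinate of $y\Delta(u_{r_n})$ can diverge, and the window property and Hopf ratio theorem only control points whose first coordinate lies in the conull set $W$ (or in a compact set of positive measure on which the estimates hold uniformly); an arbitrary point of the closed set $\overline{xH}$ need not satisfy this. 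Moreover, even at good base points you must choose the balancing time $r_n$ inside a window $[t_n,K't_n]$ dictated by $s_n$ \emph{and} inside the set of return times to a fixed compact set, so you need that return-time set to be $K$-thick. The paper extracts exactly this from Theorem \ref{window lemma} (Proposition \ref{K thick set}), builds nested compact sets $\Omega_2\subset\Omega_1$, and runs the drift only at points of a $U$-minimal set $Z\subset\overline{xH}$ relative to $\Omega_1$ that meet $\Omega_2$; the appendix lemma showing $Z\cap\Omega_2\neq\emptyset$ is what makes this possible. (A further small point: the renormalized limit of $u_{r_n}^{-1}s_nu_{r_n}$ a priori lands in the normalizer $AU$ of $U$, not necessarily in $U$, and the cases must be separated as in Benoist--Oh.)

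Second, the upgrade from the pointwise statement $y_\infty(e,u_c)\in Y$ to the global invariance $Y(e,u_c)=Y$ is not justified by either of your options. Option (i) speaks of ``$\Delta(U)$-generic points of $Y$'', but $Y=\overline{xH}$ is merely a closed set with no given invariant measure, so genericity is undefined. Option (ii) presupposes a locally finite $\Delta(U)$-invariant measure supported on $Y$ whose projections are the Haar measures, i.e.\ a $U$-joining supported on $\overline{xH}$; constructing such a measure is not addressed and is essentially as hard as the theorem itself --- the topological statement does not reduce to the measure classification for free. The paper sidesteps both issues: minimality of $Z$ relative to $\Omega_1$ is what converts ``one translate of one point lands back in the set'' into a statement about the whole set (this is the content of Lemmas 3.3--3.4 and Propositions 3.5--3.6 of Benoist--Oh, applied to $z\in Z\cap\Omega_2$), and the final density uses Ledrappier's theorem that almost every $N$-orbit on $\Gamma_1\backslash\PSL$ is dense together with compactness of $\Gamma_2\backslash\PSL$, rather than a stabilizer-subgroup argument. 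To salvage your route you would need to replace both steps by the relative $U$-minimal-set formalism; as written, the proof does not close.
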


\subsection{Dynamics of unipotent flows}
A key input in the proof of Theorem \ref{diagonal orbit closure} is the window property of the horocycle flow on $\Gamma_1\backslash \PSL$ (Theorem \ref{window lemma}). Set
\begin{itemize}
\item $N:=\{u_t=\begin{pmatrix} 1 & t\\ 0 & 1\end{pmatrix}:t\in \mathbb{R}\}$;
\item $D:=\{a_t=\begin{pmatrix} e^{t/2} & 0 \\ 0 & e^{-t/2}\end{pmatrix}:t\in\mathbb{R}\}$, $A=\{(a_t,a_t)\}$;

\item $U_1=\{(u_t,e)\}$, $U_2=\{(e,u_t)\}$, $U=\{(u_t,u_t)\}$.
\end{itemize}
For simplicity, we write $\tilde{u}_t$ for $(u_t,u_t)$ and $\tilde{a}_t$ for $(a_t,a_t)$.

\begin{defn}
Let $K>1$. A subset $T\subset \mathbb{R}$ is called $K$-thick if $T$ meets $[-Kt, -t]\cup [t, Kt]$ for all $t>0$.
\end{defn}

Denote the Haar measure on $\Gamma_1\backslash \PSL$ by $m_{\Gamma_1}$. The following proposition can be easily deduced from Theorem \ref{window lemma}.
\begin{prop}
\label{K thick set}
 For any compact subset $Q_1$ in $\Gamma_1\backslash \PSL$ with $m_{\Gamma_1}(Q_1)>0$, there exist a compact subset $Q_2\subset Q_1$ 
 of positive measure and constants $K, T_0 >1$ such that for $Q_1(T_0)=\cup_{-T_0\le t\le T_0} Q_1 u_t$,  the set $$\{t\in \mathbb{R}: xu_t\in  Q_1(T_0) \}$$ is $K$-thick for every $x\in Q_2$.
\end{prop}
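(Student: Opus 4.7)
The plan is to upgrade Theorem \ref{window lemma} from continuous test functions to the characteristic function $\chi_{Q_1}$ via Hopf's ratio ergodic theorem, and then combine the resulting returns to $Q_1$ with the trivial fact that $xu_t\in Q_1(T_0)$ whenever $x\in Q_1$ and $|t|\le T_0$ in order to handle times close to $0$.

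Fix $\eta=1/4$ and set $r:=r(\eta)\in(0,1)$ from Theorem \ref{window lemma}, and $K:=1/r$. Choose any non-negative $\psi\in C_c(\Gamma_1\backslash \PSL)$ with $\int\psi\,dm_{\Gamma_1}>0$. Because the horocycle flow on $\Gamma_1\backslash\PSL$ is ergodic and conservative with respect to $m_{\Gamma_1}$ (by Rees and Ledrappier--Sarig), Hopf's ratio ergodic theorem gives
\[
\frac{\int_0^T\chi_{Q_1}(xu_t)\,dt}{\int_0^T\psi(xu_t)\,dt}\;\longrightarrow\;\frac{m_{\Gamma_1}(Q_1)}{\int\psi\,dm_{\Gamma_1}}\;>\;0
\]
for $m_{\Gamma_1}$-almost every $x$, and the same limit with $rT$ in place of $T$. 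Applying Egorov's theorem to these two convergences together with the convergences listed in Remark \ref{remark on window lemma} (for $\psi$), I extract a compact subset $Q_2\subset Q_1$ of positive measure on which all of these convergences are uniform.

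Combining Theorem \ref{window lemma} applied to $\psi$ with the uniform Hopf comparison
\[
\frac{\int_0^{rT}\chi_{Q_1}(xu_t)\,dt}{\int_0^T\chi_{Q_1}(xu_t)\,dt}=\frac{\int_0^{rT}\chi_{Q_1}/\int_0^{rT}\psi}{\int_0^T\chi_{Q_1}/\int_0^T\psi}\cdot\frac{\int_0^{rT}\psi(xu_t)\,dt}{\int_0^T\psi(xu_t)\,dt},
\]
in which the first factor on the right tends to $1$ uniformly on $Q_2$ and the second is bounded by $\eta$ by the window lemma, I obtain a uniform $T_0>1$ such that for every $x\in Q_2$ and every $T>T_0$
\[
\int_{rT}^T\chi_{Q_1}(xu_t)\,dt\;\ge\;\tfrac12\int_0^T\chi_{Q_1}(xu_t)\,dt\;>\;0.
\]
For each such $T$ there is therefore some $s\in[rT,T]$ with $xu_s\in Q_1\subset Q_1(T_0)$.

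To conclude $K$-thickness of $\{s:xu_s\in Q_1(T_0)\}$, fix $x\in Q_2$ and $t>0$. If $0<t\le T_0$, then $x\in Q_1$ and $|t|\le T_0$ directly give $xu_t\in Q_1u_t\subset Q_1(T_0)$, so $s=t\in[t,Kt]$ witnesses a return. If $t>T_0$, apply the upgraded inequality above with $T=Kt>T_0$: this yields $s\in[rT,T]=[t,Kt]$ with $xu_s\in Q_1(T_0)$. In both cases $\{s:xu_s\in Q_1(T_0)\}$ meets $[t,Kt]\subset[-Kt,-t]\cup[t,Kt]$, giving $K$-thickness. The main technical point is the Egorov argument producing $Q_2$ on which both the window-lemma hypotheses of Remark \ref{remark on window lemma} and the two Hopf comparisons are simultaneously uniform; this is routine but must be combined with the observation that once $\int_0^T\chi_{Q_1}(xu_t)\,dt$ is positive and grows to infinity by conservativity, any bound on the ratio $\int_0^{rT}\chi_{Q_1}/\int_0^T\chi_{Q_1}$ below $1$ immediately produces the required return in $[rT,T]$.
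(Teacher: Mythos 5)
Your proof is correct and is essentially the deduction the paper intends: the paper gives no details beyond ``can be easily deduced from Theorem \ref{window lemma}'', and your Hopf-ratio/Egorov upgrade of the window lemma from continuous test functions to $\chi_{Q_1}$ on a positive-measure compact $Q_2\subset Q_1$, followed by the choice $K=1/r$ and the use of the $T_0$-fattening only for times $0<t\le T_0$, is exactly the standard route (the same upgrade to indicator functions on large-measure compact subsets appears in the paper's proof of Lemma \ref{poly ineq}). The only nit is attribution: ergodicity and conservativity of the horocycle flow with respect to $m_{\Gamma_1}$ come from Kaimanovich/Ledrappier--Sarig rather than Rees, whose result concerns the geodesic flow.
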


\subsection{Proof of Theorem \ref{diagonal orbit closure}}
Let $X=\GaG$. Our proof is modeled on  \cite{Benoist-Oh} using the $U$-minimal sets relative to a fixed compact subset of $X$. In the construction of minimal sets, we need to find a compact subset $\Omega\subset X$ such that the $U$-orbit of every element of $\Omega$ returns to $\Omega$ for $K$-thick amount of time
for some $K>1$. When $\Gamma_1$ is finitely generated, there is a natural compact subset in $X$ to use, which is the non-wandering set
 of the geodesic flow.
 When it comes to our setting, such a non-wandering set is the whole $X$ and hence non-compact.
 In view of Proposition \ref{K thick set}, instead of finding one such compact subset,
 we construct two compact subsets $\Omega_2\subset \Omega_1$ in $X$ such that the $U$-orbit of every element of $\Omega_2$ returns to 
 $\Omega_1$ for $K$-thick amount of time for some $K>1$. This difference results in some modification in the statement. But with Proposition \ref{K thick set} available, the proof is essentially a verbatim repetition of Benoist-Oh's proof. We will list the steps of the proof and point out the necessary modification.

Set $Q'_1$ to be a compact subset in $\Gamma_1\backslash \PSL$ of positive measure such that for every point $x_1\in Q'_1$, the orbit $x_1N$ is dense in $\Gamma_1\backslash \PSL$. It is shown in \cite{Ledrappier} that for $m_{\Gamma_1}$-a.e. $x_1$, the orbit $x_1N$ is dense in $\Gamma_1\backslash \PSL$ . Hence such a compact set $Q'_1$ exists. 

Let $Q_2$ be a compact subset in $Q'_1$ such that for every $x_1\in Q_2$, the set $\{t\in \mathbb{R}_{\geq 0}: x_1a_t\in Q'_1\}$ is unbounded and the set
 $\{t\in \mathbb{R}: x_1u_t\in \cup_{|t|\leq T_0}Q'_1u_t\}$ is $K$-thick for some constants $K,T_0>1$. The existence of such a compact set $Q_2$ follows from Proposition \ref{K thick set} and the fact that the $D$-action on $\Gamma_1\backslash \PSL$ is conservative \cite{Re}.

Let $Q_1=\cup_{|t|\leq T_0}Q_1'u_t$. 
Set
\begin{equation*}
\Omega_1:=Q_1\times \Gamma_2\backslash \PSL\,\,\,\text{and}\,\,\,\Omega_2:=Q_2\times \Gamma_2\backslash \PSL.
\end{equation*}
Note that for each $x\in \Omega_2$, the set
$$T(x,\Omega_1):=\{t\in \mathbb{R}: x\tilde{u}_t\in \Omega_1\}$$
is $K$-thick and the set $\{t\in \mathbb{R}_{\geq 0}: x\tilde{a}_t\in \Omega_1\}$ is unbounded.

Let $x=(x_1,x_2)\in X$ and consider the orbit $xH$.  Let $Y$ be an $H$-minimal subset of the closure $\overline{xH}$ with respect to $\Omega_1$, i.e., $Y$ is a closed $H$-invariant subset of $\overline{xH}$ such that $Y\cap \Omega_1\neq \emptyset$ and $yH$ is dense in $Y$ for every $y\in Y\cap \Omega_1$.  Let $Z$ be a $U$-minimal subset of $\overline{xH}$ with respect to $\Omega_1$. Such minimal sets $Y$ and $Z$ exist as $\Omega_1$ is compact and $xH$ intersects $\Omega_1$ non-trivially.

In the following, we assume that the orbit $xH$ is not closed and show that $xH$ is dense in $X$.

\begin{lem}
The set $Z$ intersects $\Omega_2$ non-trivially.
\end{lem}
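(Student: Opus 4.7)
The strategy is to exploit the asymmetry between the two factors: $\Gamma_1\backslash \operatorname{PSL}_2(\mathbb{R})$ is infinite volume and non-compact, whereas $\Gamma_2\backslash \operatorname{PSL}_2(\mathbb{R})$ is compact because $\Gamma_2$ is a cocompact lattice. Since $\Omega_2 = Q_2 \times \Gamma_2\backslash \operatorname{PSL}_2(\mathbb{R})$, the problem reduces to producing a point of $Z$ whose first coordinate lies in $Q_2$: the second coordinate can then be any accumulation point, which exists for free by compactness of the second factor.

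Following this blueprint, I would start with an arbitrary $z=(z_1,z_2)\in Z\cap \Omega_1$ (nonempty by the choice of $Z$) and first use $\tilde U$-invariance to bring $z$ into a convenient position. Since $z_1\in Q_1=\bigcup_{|s|\le T_0}Q_1'u_s$, there is $|s_0|\le T_0$ with $z_1u_{s_0}\in Q_1'$, so after replacing $z$ with $z\tilde u_{s_0}$ I may assume $z_1\in Q_1'$. The defining property of $Q_1'$ then furnishes exactly what is needed: the orbit $z_1 N$ is dense in $\Gamma_1\backslash \operatorname{PSL}_2(\mathbb{R})$.

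Next I fix any $y\in Q_2$ (nonempty since $Q_2$ has positive measure) and, using density, choose a sequence $t_n\in\mathbb{R}$ with $z_1u_{t_n}\to y$. The sequence $z\tilde u_{t_n}\in Z$ then has first coordinate converging to $y\in Q_2$. By compactness of $\Gamma_2\backslash \operatorname{PSL}_2(\mathbb{R})$, a subsequence of $z_2u_{t_n}$ converges to some $y_2$, so along that subsequence $z\tilde u_{t_n}\to (y,y_2)$; the limit lies in $\Omega_2$ by construction, and in $Z$ because $Z$ is closed. Thus $Z\cap\Omega_2\neq\emptyset$.

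I do not anticipate a serious obstacle at this step: the hard work of Proposition \ref{K thick set} (producing $K$-thick return times from $Q_2$ to $Q_1$) is not needed here; it is reserved for the subsequent polynomial-divergence stage of the argument for Theorem \ref{diagonal orbit closure}. The present lemma is precisely the bridge that allows one to start in $\Omega_2$ so that those $K$-thick return-time properties become available later.
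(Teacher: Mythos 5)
Your proof is correct and is essentially the same as the paper's: start from $z\in Z\cap\Omega_1$, use that $z_1N$ is dense in $\Gamma_1\backslash \operatorname{PSL}_2(\mathbb{R})$ (by the defining property of $Q_1'$, unaffected by the $u_{s_0}$-translate), steer the first coordinate into $Q_2$, and use cocompactness of $\Gamma_2$ to extract a limit of the second coordinate, with the limit point lying in the closed $U$-invariant set $Z$. Your remark that the $K$-thickness from Proposition \ref{K thick set} is not needed at this step is also accurate.
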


\begin{proof}
Let $z=(z_1,z_2)\in Z\cap \Omega_1$ and $w_1\in Q_2$. It follows from the construction of $Q_1$ that the orbit $z_1N$ is dense in $\Gamma_1\backslash \PSL$. As a result, there exists a sequence $\{t_n\}\subset \mathbb{R}$ such that $z_1u_{t_n}$ converges to $w_1$. Since $\Gamma_2\backslash \PSL$ is compact, the sequence $\{z_2u_{t_n}\}$ has a limit point $w_2\in \Gamma_2\backslash \PSL$. Consequently, the point $(w_1, w_2)\in \overline{zU} \cap \Omega_2=Z\cap \Omega_2$.
\end{proof}

Theorem \ref{diagonal orbit closure} follows from the similar argument as in \cite{Benoist-Oh}. In particular, we apply the proofs of Lemmas 3.3, 3.4 and Propositions 3.5, 3.6 in \cite{Benoist-Oh} to a point $z\in Z\cap \Omega_2$.








\end{document}